\date{}
\newlength{\defbaselineskip}
\newcommand{\setlinespacing}[1]
           {\setlength{\baselineskip}{#1 \defbaselineskip}}
\theoremstyle{plain}
\newtheorem{theorem}{Theorem}[section]
\newtheorem{proposition}[theorem]{Proposition}
\newtheorem{lemma}[theorem]{Lemma}
\newtheorem{corollary}[theorem]{Corollary}
\theoremstyle{definition}
\newtheorem{definition}[theorem]{Definition}
\newtheorem{remark}[theorem]{Remark}
\theoremstyle{remark}
\newcommand{\na}{\mathbb{N}}
\newcommand{\re}{\mathbb{R}}
\newcommand{\LL}{\>\hbox{\vrule width.2pt \vbox to 7pt{\vfill\hrule width 7pt height.2pt}}\>}
\def\@ceqnnum{{\reset@font\rm (\tempequation)}}
\def\@ceqncr{{\ifnum0=`}\fi\@ifstar{\global\@eqpen\@M
    \@cyeqncr}{\global\@eqpen\interdisplaylinepenalty \@cyeqncr}}
\def\@cyeqncr{\@ifnextchar [{\@cxeqncr}{\@cxeqncr[\z@]}}
\def\@cxeqncr[#1]{\ifnum0=`{\fi}\@@ceqncr
   \noalign{\penalty\@eqpen\vskip\jot\vskip #1\relax}}
\def\@@ceqncr{\let\@tempa\relax
    \ifcase\@eqcnt \def\@tempa{& & &}\or \def\@tempa{& &}%
      \else \def\@tempa{&}\fi
     \@tempa \if@eqnsw\@ceqnnum\fi
     \global\@eqnswtrue\global\@eqcnt\z@\cr}
\def\clabel#1#2{\global\let\tempequation #2
   \@bsphack\if@filesw {\let\thepage\relax
   \def\protect{\noexpand\noexpand\noexpand}%
   \edef\@tempa{\write\@auxout{\string
      \newlabel{#1}{{#2}{\thepage}}}}%
   \expandafter}\@tempa
   \if@nobreak \ifvmode\nobreak\fi\fi\fi\@esphack}
\def\ceqnarray{
\global\@eqnswtrue\m@th
\global\@eqcnt\z@\tabskip\@centering\let\\\@ceqncr
$$\halign to\displaywidth\bgroup\@eqnsel\hskip\@centering
  $\displaystyle\tabskip\z@{{}##}$&\global\@eqcnt\@ne
  \hskip 2\arraycolsep \hfil${{}##}$\hfil
  &\global\@eqcnt\tw@ \hskip 2\arraycolsep
$\displaystyle\tabskip\z@{{}##}$\hfil
   \tabskip\@centering&\llap{##}\tabskip\z@\cr}
\def\endceqnarray{\@@ceqncr\egroup$$\global\@ignoretrue}
\long\def\salta#1{\relax}
\def\rn{\mathbb{R}^{N}}
\def\liq{L^{\infty}(Q)}
\def\be{\begin{equation}}
\def\ee{\end{equation}}
\def\rife#1{(\ref{#1})}
\def\rifer#1{({\rm \ref{#1}})}
\def\a#1{a(t,x,\nabla #1)}
\def\hnve{H_{n}(v^\vare )}
\def\hnv{H_{n}(v )}
\def\ohnv{\overline{H}_{n}(v )}
\def\tkve{T_{k}(v^\vare)}
\def\ve{v^\vare}
\def\ue{u^\vare}
\def\ut{\tilde{u}}
\def\tkv{T_{k}(v )}
\def\tkvn{T_{k}(v )_\nu}
\def\vare{\varepsilon}
\def\t1p0{T^{1,p}_{0}(\Omega)}
\def\capp{\text{\rm{cap}}_{p}}
\def\m2{M^{\frac{N(p-1)}{N-1}}(\Omega)}
\def\div{{\rm div}}
\def\regq{C^{\infty}_{0}(Q)}
\def\sob{W^{1,p}_{0}(\Omega)}
\def\mhe{\hat{\mu}^{\vare}_{0}}
\def\msp{\mu_{s}^{+}}
\def\msm{\mu_{s}^{-}}
\def\lep{\lambda_{\oplus}^{\vare}}
\def\lem{\lambda_{\ominus}^{\vare}}
\def\mh{\hat{\mu}_{0}}
\def\ms{\mu_{s}}
\def\into{\int_{\Omega}}
\def\intq{\int_{Q}}
\def\intt{\int_{0}^{T}}
\def\liq{L^{\infty}(Q)}
\def\w-1p'{W^{-1,p'}(\Omega)}
\def\pw-1p'{L^{p'}(0,T;W^{-1,p'}(\Omega))}
\def\l{\textsl{L}}
\def\C{C^{\infty}_{0}(Q)}
\def\dys{\displaystyle}
\def\luq{L^{1}(Q)}
\def\lp'n{(L^{p'}(\Omega))^{N}}
\def\supp{\text{\rm{supp}}}
\def\fdp{\varphi^{+}_{\delta}}
\def\fdm{\varphi^{-}_{\delta}}
\def\fde{\Phi_{\delta,\eta}}
\def\pdp{\psi_{\delta}^{+}}
\def\pdm{\psi_{\delta}^{-}}
\def\pep{\psi_{\eta}^{+}}
\def\pem{\psi_{\eta}^{-}}
\def\benve{\beta_{n}(\ve)}
\def\pdep{\psi_{\delta}^{+}\psi_{\eta}^{+}}
\def\pdem{\psi_{\delta}^{-}\psi_{\eta}^{-}}
\def\kdp{K^{+}_{\delta}}
\def\kdm{K^{-}_{\delta}}
\def\ohp{\overline{H}(\varphi^{+}_{\delta})}
\def\ohm{\overline{H}(\varphi^{-}_{\delta})}
\def\hp{H(\varphi^{+}_{\delta})}
\def\udp{U^{+}_{\delta}}
\def\udm{U^{-}_{\delta}}
\def\pep{\psi_{\eta}^{+}}
\def\pem{\psi_{\eta}^{-}}
\def\fde{\Phi_{\delta,\eta}}
\def\regot{C^{\infty}_{0}([0,T]\times\Omega)}
\def\psob{L^{p}(0,T;W^{1,p}_{0}(\Omega))}
\def\plq{L^{q}(0,T;W^{1,q}_{0}(\Omega))}
\def\pli{L^{\infty}(0,T;L^1 (\Omega))}
\def\lild{L^{\infty}(0,T;L^2 (\Omega))}
\def\pd{\psi_{\delta}}
\def\sobl{L^2 (0,T ; H^1_0 ( \Omega ))}
\def\l2h-1{L^2 (0,T ; H^{-1} ( \Omega ))}
\def\uo{u_{0}}
\def\uoe{u_{0}^{\vare}}
\def\luo{L^{1}(\Omega)}
\def\lio{L^{\infty}(\Omega)}
\newcommand{\elle}[1]{L^{#1}(\Omega)}
\newcommand{\pelle}[1]{L^{#1}(Q)}
\def\om{\Omega}
\def\pbp#1#2#3#4{
\left\{\begin{array}{ll}
#1 & \mbox{in $(0,T)\times\Omega$,} \\
#2 & \mbox{on $(0,T)\times\partial\Omega$,} \\
#3 & \mbox{in $\Omega$#4}
\end{array} \right.}
\def\paraduale{L^{p'}(0,T;\duale)}
\def\duale{W^{-1,p'}(\Omega)}
\def\parelle#1{L^{#1}(Q)}
\def\car#1{\raise2pt\hbox{$\chi$}_{#1}}
\begin{document}

\setlinespacing{1}

\keywords{Nonlinear parabolic equations, parabolic capacity, measure data}
\subjclass[2000]{35B40, 35K55}

\author[F. Petitta]{Francesco Petitta}
\email{francesco.petitta@sbai.uniroma1.it}

\address[F. Petitta]{Dipartimento di Scienze di Base e Applicate
per l' Ingegneria, ``Sapienza", Universit\`a di Roma, Via Scarpa 16, 00161 Roma, Italy.}

\title[Renormalized solutions, parabolic problems with measures]{Renormalized solutions of nonlinear parabolic equations with general measure data}

\begin{abstract}

Let $\Omega\subseteq \rn$ a bounded open set, $N\geq 2$, and let
$p>1$; we prove existence of a renormalized solution for parabolic
problems whose model is
$$
\begin{cases}
    u_{t}-\Delta_{p} u=\mu & \text{in}\ (0,T)\times\Omega,\\
    u(0,x)=\uo  & \text{in}\ \Omega,\\
 u(t,x)=0 &\text{on}\ (0,T)\times\partial\Omega,
  \end{cases}
$$
where $T>0$ is any positive constant,  $\mu\in M(Q)$ is a any
measure with bounded variation over
 $Q=(0,T)\times\Omega$, and $\uo\in \luo$, and  $-\Delta_{p} u=-\div (|\nabla u|^{p-2}\nabla u )$ is the usual $p$-laplacian.
\end{abstract}
\keywords{Nonlinear parabolic equations, parabolic capacity, measure data}
\subjclass[2000]{35K55,  35D05, 35D10, 35R05}
\maketitle

\tableofcontents

\setcounter{equation}{0}
\section{Introduction}

Let $\Omega$ be a bounded, open subset of $\rn$, $T$ a positive number and
$Q=(0,T)\times\Omega$. We will deal with parabolic initial boundary value problems
\begin{equation}
\pbp{u_t+A(u) = \mu}{u = 0}{u(0)=u_0},
\label{pbase}
\end{equation}
where  $A$ is a nonlinear 
monotone and coercive operator in divergence form which acts from the 
$\psob$  into its dual $\paraduale$, $\mu$ is measure on $Q$ with bounded total variation, and $u_0\in \luo$.

As  a model example,
problem \rife{pbase} includes  the $p$-Laplace evolution equation:
\be\label{plap} \pbp{u_t-\div(|\nabla u|^{p-2}\nabla u)=\mu}{u = 0}{u(0)=u_0}.
\ee

As we said before, we are interested in the study of problem \rife{pbase} with a general Radon measure $\mu$ with bounded total variation on $Q$, and initial datum  $u_0\in\luo$ as data.
 Under our assumptions, if  $\mu\in\parelle{p'}$ and
$u_0\in L^2(\om)$, then problem \rife{pbase} turns out to have a  unique solution $u\in C (0,T;\elle2)$
in the weak sense (see
\cite{l}).
\salta{
 that is
\[
-\into \uo\ \varphi(0)\ dx-\int_{0}^{T}\langle \varphi_{t}, u  \rangle\ dt+\intq
a(t,x,\nabla u)\cdot\nabla \varphi\ dxdt=\int_{0}^{T}\langle f
,\varphi\rangle_{\w-1p',\sob}\ dt,
\]
for all $\varphi\in W $ such that $\varphi(T)=0$ (see
\cite{l}).
}

Under the  general assumption that $\mu$ and $u_0$ are bounded measures, the
existence of a distributional solution was proved in \cite{bdgo}, by
approximating \rife{pbase} with   problems having regular data and using
compactness arguments.
But, due to the lack of regularity of the solutions, the
distributional formulation is not strong enough
to provide uniqueness, as it can be proved by adapting to the parabolic case the counterexample of
J. Serrin  for the stationary problem (see \cite{se}, and refinements in \cite{p1}).\medskip

 In the case of linear operators the lack of uniqueness can be overcome by
defining the solution in a duality sense, and then  adapting the techniques of the stationary case introduced in \cite{s} (see also \cite{pe3}).

 However, for nonlinear operators  a new concept of solution was necessary 
 to get a  well--posed problem. In case of problem \rife{pbase}
with $\mu \in \luq$, this was done  independently in 
\cite{blm}     and in \cite{p} (see also \cite{amst}),  where the notions of 
{\sl renormalized solution}, and of {\sl entropy solution},  were respectively  
introduced. Both these approaches allow to obtain existence and uniqueness
 of solutions  if $\mu\in L^1(Q)$ and $u_0\in \elle1$. 
Unfortunately, these definitions do not extend directly to the case of a general, possibly singular, measure $\mu$.

 In \cite{dpp} the authors extend the result of existence and uniqueness
to a  larger class of measures which includes the $L^1$ case. Precisely, they
prove (in the framework of renormalized  solutions) that problem \rife{pbase}
has  a  unique solution  for every $u_0$ in $\elle1$ and for every measure $\mu$ which does not
charge the sets of null parabolic $p$-capacity (see Section \ref{c1} for the definition).

The importance of the
measures not charging sets of null $p$-capacity was first observed in the
stationary case in \cite{bgo}.

In order to use  a similar approach in the evolution case, 
the theory of $p$-capacity related  to the parabolic  operator $u_t+A(u)$ has been developed  \cite{dpp}, where the authors also 
investigated the relationships between  time--space dependent measures and
capacity.

\medskip
Thanks to a decomposition result proved in \cite{dpp} (see  Theorem \ref{cap2} below), if $\mu$ is
\emph{absolutely continuous} 
with respect to the $p$-capacity (these are usually called soft measures) one can still set  problem
\rife{pbase} in the framework of renormalized  solutions; as in the elliptic case, the idea formally consists in the use of test functions which depend on the solution itself.  
Thus, the definition of renormalized solution of problem \rife{pbase} can be extended to the case of general measure $\mu$ by adapting the idea of \cite{dmop} for the elliptic case.

Notice that the notion of renormalized solution and entropy solution for parabolic problem \rife{pbase} turn out to be equivalent as proved in \cite{dpr};  here we  extend the notion of renormalized solution for general measure data $\mu$ and so, thanks to this result, this notion will turn out to be coherent with all definitions of solution given before for problem \rife{pbase}.\medskip

The plan of the paper is as follows. We first introduce some basic knowledge on parabolic $p$-capacity and our main assumptions on the operator we deal with (Section \ref{c1} and Section \ref{2}). Then, in Section \ref{3}, we give the definition of renormalized solution for problem \rife{pbase},  deriving a useful  estimate enjoyed by any renormalized solution, and we state our main result on the existence of such a solution; in Section \ref{4} we prove that any renormalized solution of problem \rife{pbase} (actually a \emph{regular translation} of it), with possibly singular datum $\mu$, admits a $\capp$-quasi continuous representative defined $\capp$-quasi everywhere.

 In Section \ref{5} will see how the definition of renormalized solution does not depend on the decomposition (not uniquely determined) of the regular part of $\mu$ we mentioned above and to the statement of standard approximation argument we will use later, while  Section \ref{6} will be devoted to the proof of a key result, namely the strong convergence of the truncates of the approximating sequence in the energy space $\psob$. 

In Section \ref{7}, using the above result, we prove that there exists a renormalized solution of problem \rife{pbase}, and, finally, in Section \ref{8} we  deal with a partial uniqueness result (essentially for the linear case) and we state,   as an easy consequence of the definition of renormalized solution, 
 a sort of \emph{inverse maximum principle} for general parabolic operators.

\setcounter{equation}{0}
\section{About capacity}\label{c1}

 We  recall the notion of \emph{parabolic $p$-capacity} associated
to our problem (for further details see for instance
\cite{pierre}, \cite{dpp}).

\begin{definition}\label{cappar}
Let $Q=Q_{T}=(0,T)\times\Omega$ for any fixed $T>0$, and let us
call $V=\sob \cap L^{2}(\Omega)$ endowed with its natural norm
$\|\cdot\|_{\sob}+\|\cdot\|_{L^{2}(\Omega)}$ and
\[
W=\left\{ u\in L^{p}(0,T;V),\ u_{t}\in L^{p'}(0,T;V') \right\},
\]
endowed with its natural norm $\|u\|_{W}=\|u\|_{ L^{p}(0,T;V)}+
\|u_{t}\|_{ L^{p'}(0,T;V')}$. So, if $U\subseteq Q$ is an open set, we
define the \emph{parabolic $p$-capacity} of $U$ as
\[
\capp(U)=\inf\{\|u\|_{W}:u\in W,u\geq \chi_{U}\  \text{a.e. in}\  Q\},
\]
where as usual we set $\inf\emptyset=+\infty$. For any Borel
set $B\subseteq Q$ we then define
\[
\capp(B)=\inf\{\capp(U), U \ \text{open set of}\ Q, B\subseteq U\}.
\]
\end{definition}

Let us denote with $M(Q)$ the set of all Radon measures with bounded variation  on $Q$, while $M_{0}(Q)$ will denote the set of all measures with
bounded variation over $Q$ that do not charge the sets of
zero  $p$-capacity, that is if $\mu\in M_{0}(Q)$, then
$\mu(E)=0$, for all $E\subseteq Q$ such that $\capp(E)=0$.

\begin{remark}\label{rdefi2}
In \cite{dpp} the authors give another notion of parabolic capacity, equivalent to the one given here as far as sets of zero capacity are concerned; this different notion is defined on compact sets by minimizing the same energy over  all smooth functions greater than the characteristic function of the set and then extended to Borel sets as usual.  Therefore, thanks to this approach, we can also define this notion of parabolic capacity of a Borel set with respect to any open subset $U$ of  $Q$ and this will turn out to be very useful in what follows (see for instance Lemma \ref{thanks} below).
\end{remark}

In \cite{dpp} the authors also proved the following decomposition theorem: 

\begin{theorem}\label{cap2}
Let $\mu$ be a bounded measure on $Q$. If $\mu\in M_{0}(Q)$ then there exist $h\in \pw-1p'$, $g\in
L^{p}(0,T;\sob \cap L^{2}(\Omega))$ and $f\in L^{1}(Q)$, such that
\begin{equation}\label{cap3}
\intq \varphi\ d\mu=\int_{0}^{T}\langle h,\varphi \rangle\
dt-\int_{0}^{T}\langle\varphi_{t}, g \rangle\ dt+\intq
f\varphi\ dxdt,
\end{equation}
for any $\varphi\in C_{c}^{\infty}([0,T]\times\Omega)$, where
$\langle\cdot,\cdot\rangle$ denotes the duality between
$V'$ and $V$.
\end{theorem}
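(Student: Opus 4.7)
\medskip
\textbf{Proof plan.} The strategy is to first realize every $\mu\in M_{0}(Q)$ as a continuous linear functional on the Banach space $W$ appearing in the definition of $\capp$, and then to read off the decomposition \rife{cap3} from a structural description of the dual $W'$.

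\emph{Step 1: continuity on $W$.} The key ingredient is a parabolic quasi-continuity theory for $W$: every $u\in W$ admits a $\capp$-quasi continuous representative $\tilde u$ (cf.\ Remark \ref{rdefi2}), defined $\capp$-quasi everywhere and unique up to $\capp$-q.e.\ equality, and strong $W$-convergence implies $\capp$-q.e.\ convergence of the representatives along a subsequence. Since $\mu$ does not charge $\capp$-null sets, one has $\langle\mu,\varphi\rangle=\int_{Q}\tilde\varphi\,d\mu$ for every $\varphi\in C_{c}^{\infty}(Q)$, and combining this identity with Markov-type capacitary estimates on the superlevel sets of $\varphi$ and a dominated convergence argument along a truncating sequence yields
\[
\Bigl|\int_{Q}\varphi\,d\mu\Bigr|\leq C(\mu)\,\|\varphi\|_{W}\qquad\forall\,\varphi\in C_{c}^{\infty}(Q).
\]
By density this extends $\mu$ to an element of $W'$.

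\emph{Step 2: structure of $W'$.} By Hahn--Banach, this functional extends to a continuous linear form on $L^{p}(0,T;V)\times L^{p'}(0,T;V')$ equipped with the product norm. Riesz representation on the two factors then produces $h\in L^{p'}(0,T;V')$ and $g\in L^{p}(0,T;V)$ such that
\[
\int_{Q}\varphi\,d\mu=\int_{0}^{T}\langle h,\varphi\rangle\,dt-\int_{0}^{T}\langle\varphi_{t},g\rangle\,dt
\]
for every $\varphi\in C_{c}^{\infty}(Q)$. Since $V=\sob\cap L^{2}(\Omega)$, the dual splits as $V'=W^{-1,p'}(\Omega)+L^{2}(\Omega)$, so we may write $h=h_{1}+h_{2}$ with $h_{1}\in\pw-1p'$ and $h_{2}\in L^{p'}(0,T;L^{2}(\Omega))$. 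Since $\Omega$ is bounded and $T<\infty$, H\"older's inequality yields the continuous embedding $L^{p'}(0,T;L^{2}(\Omega))\hookrightarrow L^{1}(Q)$, so setting $f:=h_{2}$ (now viewed in $L^{1}(Q)$) and relabeling $h:=h_{1}$ produces exactly the representation \rife{cap3}.

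\emph{Main obstacle.} The delicate step is the continuity claim in Step~1. Unlike the elliptic case, the $W$-norm controls a time-derivative, so approximating $\capp$-quasi continuous functions by smooth ones must be coupled with a time-regularization (e.g.\ Landes-type time averages or Steklov regularization) in order to simultaneously preserve energy bounds and produce admissible test functions; this time-smoothing procedure, together with the rigorous construction of $\capp$-quasi continuous representatives in $W$, is precisely the technical core developed in \cite{dpp}.
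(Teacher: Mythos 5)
The paper does not give its own argument here: it simply cites Theorem~1.1 of \cite{dpp}, so there is no internal proof to compare against. What I can say is that your proposed proof has a genuine gap at Step~1, and it is not a technicality that can be patched.

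The claim that every $\mu\in M_{0}(Q)$ satisfies a bound $\bigl|\int_{Q}\varphi\,d\mu\bigr|\leq C(\mu)\,\|\varphi\|_{W}$ --- i.e.\ that $\mu$ extends to an element of $W'$ --- is false. Membership in $M_{0}(Q)$ only asserts \emph{qualitative} absolute continuity with respect to $\capp$ ($\capp(E)=0\Rightarrow\mu(E)=0$); it carries no quantitative control of the form $\mu(E)\leq C\,\capp(E)$, and the latter is what a Markov-type argument on superlevel sets would need in order to produce a linear estimate. Indeed, any $f\in L^{1}(Q)$ gives a bounded measure $f\,dx\,dt$ that does not charge $\capp$-null sets (these have Lebesgue measure zero), yet for $p$ not too large there are $L^{1}$ functions that do not belong to $\pw-1p'+\partial_{t}\bigl(L^{p}(0,T;V)\bigr)$, so $f\,dx\,dt\notin W'$. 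What the quasi-continuity machinery plus dominated convergence actually gives you is the \emph{finiteness} of $\int_{Q}\tilde\varphi\,d\mu$ for $\varphi\in W$ (this is essentially Corollary~\ref{quella} in the paper), which is strictly weaker than continuity of the linear functional. If Step~1 were true, the resulting decomposition would carry no $L^{1}$ term at all; the presence of $f\in L^{1}(Q)$ in \rife{cap3} is exactly the signature of the fact that $M_{0}(Q)\not\subset W'$.

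The actual proof in \cite{dpp} (mirroring the elliptic argument of Boccardo--Gallou\"et--Orsina in \cite{bgo}) proceeds in the opposite direction. After a Hahn decomposition one may assume $\mu\geq 0$; one then constructs an increasing sequence of measures $\lambda_{n}\leq\mu$ with $\lambda_{n}\leq n\,\capp$ (so that each $\lambda_{n}$ \emph{is} continuous on $W$ and admits a representation in $\pw-1p'+\partial_{t}L^{p}(0,T;V)$), proves $\|\mu-\lambda_{n}\|_{M(Q)}\to 0$, and writes $\mu=\lambda_{1}+\sum_{n}(\lambda_{n+1}-\lambda_{n})$. The telescoping tail, being summable in total variation but not in $W'$, is what produces the $L^{1}(Q)$ component $f$. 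Your Step~2 (Hahn--Banach plus Riesz, then $V'=W^{-1,p'}(\Omega)+L^{2}(\Omega)$ and H\"older to absorb $L^{p'}(0,T;L^{2}(\Omega))$ into $L^{1}(Q)$) is a reasonable way to organize the structure of $W'$ once you have functionals genuinely in $W'$ --- and it is where a variant of it is used on each $\lambda_{n}$ --- but it cannot be applied to $\mu$ itself.
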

\begin{proof}
See \cite{dpp}, Theorem $1.1$.
\end{proof}\medskip

So, if $\mu$ is in $M(Q)$, thanks to a well known decomposition
result (see for instance \cite{fst}), we can split it into a sum
(uniquely determined) of its \emph{absolutely continuous} part
$\mu_{0}$ with respect to $p$-capacity and its \emph{singular}
part $\mu_{s}$,  that is $\mu_{s}$ is concentrated on a set $E$ ( $\mu_{s}=\mu\LL E$) of zero
$p$-capacity; we will say that $\mu_{s}\perp\capp$. Hence, if $\mu\in M(Q)$, by Theorem \ref{cap2}, we have
\begin{equation}
\label{dec} \mu=f-\div (G)+ g_t +\mu^{+}_{s}-\mu_{s}^{-}, \end{equation}
 in the sense
of distributions, for some $f\in L^{1}(Q)$, $G\in (L^{p'}(Q))^{N}$, $g\in\psob$, where $\mu_{s}^{+}$ and $\mu_{s}^{-}$ are respectively the positive and the negative part of  $\mu_{s}$;  note
that the decomposition of the absolutely continuous part of $\mu$
in Theorem \ref{cap2} is not uniquely determined.

Moreover, letting
\[
W_1 =\{u\in\psob,\ u_t\in \pw-1p'\},
\]
then, in our setting, any function of $W_1 $ will admit a $\capp$-quasi continuous representative (that is, it coincides
$\capp$-quasi everywhere with a function which is continuous everywhere but on a set of arbitrarily small capacity,
 see \cite{dpp}).
\setcounter{equation}{0}
\section{General assumptions}\label{2}

 Let $a:(0,T)\times\Omega\times \rn\rightarrow\rn$ be a Carath\'eodory function
(i.e. $a(\cdot,\cdot,\xi)$ is measurable on $Q$,
$\forall\xi\in\rn$, and $a(t,x,\cdot)$ is continuous on $\rn$ for
a.e. $(t,x)\in Q$) such that the following holds:
\begin{equation} \label{a1}
 a(t,x,\xi)\cdot\xi\geq\alpha|\xi|^{p},
\end{equation}
\begin{equation}\label{a2}
|a(t,x,\xi)|\leq\beta[b(t,x)+|\xi|^{p-1}],
\end{equation}
\begin{equation}\label{a3}
(a(t,x,\xi)-a(t,x,\eta))\cdot(\xi-\eta)>0,
\end{equation}
for almost every $(t,x)\in Q$, for all $\xi,\ \eta\in\rn$ with
$\xi\neq\eta$, where  $p>1$, $\alpha,\ \beta$ are positive constants and
$b$ is a nonnegative function in $L^{p'}(Q)$. For every
$u\in \psob$, let us define the differential operator
\[
A(u)=-\div(a(t,x,\nabla u)),
\]
that, thanks to the assumption on $a$, turns out to be a coercive, monotone operator acting from
$ \psob$ into its dual $\pw-1p'$. We shall deal with solutions of the initial boundary value problem
\begin{equation}\label{base}
\begin{cases}
    u_{t}+A(u)=\mu & \text{in}\ (0,T)\times\Omega \\
    u=0 & \text{on}\  (0,T)\times\partial\Omega\\
    u(0)=\uo & \text{in}\ \Omega,
,
\end{cases}
\end{equation}
where $\mu$ is a measure with bounded variation over $Q$, and $\uo\in\luo$.

 For the sake of exposition we will make a further assumption on the range of $p$; we assume
$p>\frac{2N+1}{N+1}$ that is
a standard assumption that gives good compactness results and we
will assume it throughout the paper. Let us observe that, in this setting, the spaces $W$ and $W_1$ turn out to coincide.

\begin{remark}\label{regular}
As we said before, if $\mu\in L^{p'}(Q) $, and $\uo\in L^{2}(\Omega) $, it is well known that problem (\ref{base}) has a unique
solution $u\in W$ in the variational sense, that is
\[
-\into \uo\ \varphi(0)\ dx-\int_{0}^{T}\langle \varphi_{t}, u  \rangle\ dt+\intq
a(t,x,\nabla u)\cdot\nabla \varphi\ dxdt=\int_{0}^{T}\langle f
,\varphi\rangle_{\w-1p',\sob}\ dt,
\]
for all $\varphi\in W $ such that $\varphi(T)=0$ (see
\cite{l}, and \cite{dpp}); notice that $W$ continuously injects in $C([0,T];L^{2}(\Omega))$, and the initial datum is achieved in this sense.
\end{remark}

In \cite{bbggpv} (for more details see also \cite{bgo}) the
concept of entropy solution of the
 elliptic boundary value problem associated to (\ref{base}) was introduced for a 
measure $\mu\in M_{0}(\Omega)$, that is a measure
 with bounded variation over $\Omega$ which does not charge the sets of zero elliptic $p$-capacity (for definition 
and basic properties see for instance \cite{hkm}); an analogous definition
  was given for the parabolic problem in  \cite{p}.
The entropy solution $u$ of the problem (\ref{base}) exists and is
unique as shown in \cite{p} for $\mu\in L^{1}(Q)$,
 result then  improved  for more general measure data: more precisely, for $\mu\in M_0(Q)$ (see for instance \cite{po} and \cite{dpp} in which the result is proved via
  the notion of renormalized solution that turns out to be equivalent to the one of entropy solution with this kind of data as proved in \cite{dpr}).
   Moreover, the solution is such that $|a(t,x,\nabla u)|\in L^{q}(Q)$ for all $\dys q<1+\frac{1}{(N+1)(p-1)}$,
    even if its  gradient  may not belong to any Lebesgue space. Our purpose is to extend all these definitions to general measure data.

Finally, let us state the
following result that will
 be very useful in the sequel; its proof relies on an easy application of \emph{Egorov theorem}.

\begin{proposition}\label{quell}
Let $\rho_\vare$ be a sequence of $\luq$ functions that converges to $\rho$ weakly in $\luq$, and let $\sigma_\vare $ be a sequence of functions in $\liq$ that is bounded in $\liq$ and converges to $\sigma$ almost everywhere on $Q$. Then
$$
\dys\lim_{\vare\to 0}\intq \rho_\vare \,\sigma_\vare\ dxdt=\intq \rho\,\sigma\ dxdt.
$$
\end{proposition}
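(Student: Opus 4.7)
The plan is to split the difference
\[
\int_Q \rho_\vare \sigma_\vare\,dx\,dt - \int_Q \rho \sigma\,dx\,dt = \int_Q (\rho_\vare - \rho)\sigma\,dx\,dt + \int_Q \rho_\vare (\sigma_\vare - \sigma)\,dx\,dt,
\]
and show that each piece vanishes as $\vare\to 0$. For the first piece, note that since $\sigma_\vare\to\sigma$ a.e. and $\|\sigma_\vare\|_\infty \le M$ for some $M$ independent of $\vare$, we have $\sigma\in L^\infty(Q)$ with $\|\sigma\|_\infty\le M$. Hence this term is precisely the duality pairing of $\rho_\vare-\rho$ against a fixed $L^\infty$ test function, so by the very definition of weak convergence in $L^1(Q)$, it tends to $0$.

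For the second piece, the idea is to combine Egorov's theorem with the equi-integrability of $\{\rho_\vare\}$. Weak convergence in $L^1(Q)$ implies that $\{\rho_\vare\}$ is relatively weakly compact in $L^1$, so by the Dunford--Pettis theorem it is uniformly integrable and bounded. Fix $\eta>0$, and choose $\delta>0$ such that $\sup_\vare \int_E |\rho_\vare|\,dx\,dt < \eta$ whenever $|E|<\delta$. By Egorov's theorem applied to the a.e. convergent sequence $\sigma_\vare\to\sigma$ on the finite-measure set $Q$, there exists a measurable $A_\delta\subseteq Q$ with $|Q\setminus A_\delta|<\delta$ such that $\sigma_\vare\to\sigma$ uniformly on $A_\delta$. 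Then
\[
\left|\int_Q \rho_\vare(\sigma_\vare-\sigma)\,dx\,dt\right| \le \|\sigma_\vare-\sigma\|_{L^\infty(A_\delta)}\,\|\rho_\vare\|_{L^1(Q)} + 2M\int_{Q\setminus A_\delta} |\rho_\vare|\,dx\,dt,
\]
and, using the uniform convergence on $A_\delta$ and the choice of $\delta$, the right-hand side is bounded by $C\,\|\sigma_\vare-\sigma\|_{L^\infty(A_\delta)} + 2M\eta$, which is smaller than $(2M+1)\eta$ for $\vare$ small enough. Since $\eta$ is arbitrary, the second piece tends to zero as well.

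The main (only) technical point is the equi-integrability of the weakly convergent sequence $\{\rho_\vare\}$ via Dunford--Pettis; once that is in hand, the argument is a routine Egorov decomposition. Everything else — the use of weak convergence for the first term, and the triangle estimate for the second — is automatic.
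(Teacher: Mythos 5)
Your proof is correct, and it follows exactly the route the paper itself alludes to: the paper does not give a detailed proof but merely remarks that the result ``relies on an easy application of Egorov theorem,'' which is precisely the decomposition you carry out (weak convergence against the fixed $L^\infty$ function $\sigma$ for the first piece, Dunford--Pettis equi-integrability combined with Egorov for the second). Your write-up simply fills in the details that the paper omits.
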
 
 
\setcounter{equation}{0}
\section{Definition of renormalized solution and main result}\label{3}

\ \  Here we give the definition of renormalized solution following the idea of \cite{bp1}. Let
$T_{k}(s)$ be  the \emph{truncation} at levels $\pm k$, that is $T_{k}(s)=\text{max}(-k,\text{min}(k,s))$, for any $k>0$;

\centerline{
\begin{picture}(200,80)(-100,-35)
\put(-100,0){\vector(1,0){200}}
\put(0,-35){\vector(0,1){80}}
\thicklines
\put(-100,-30){\line(1,0){70}}
\put(-30,-30){\line(1,1){60}}
\put(30,30){\line(1,0){70}}
\thinlines
\multiput(-30,0)(0,-10){3}{\line(0,-1){5}}
\multiput(30,0)(0,10){3}{\line(0,1){5}}
\multiput(0,30)(10,0){3}{\line(1,0){5}}
\multiput(0,-30)(-10,0){3}{\line(-1,0){5}}
\put(-35.5,4){$\scriptstyle -k$}
\put(28,-10){$\scriptstyle k$}
\put(-8,28){$\scriptstyle k$}
\put(2,-32){$\scriptstyle -k$}
\put(92,-10){$\scriptstyle s$}
\put(4,38){$\scriptstyle T_k(s)$}
\end{picture}
}

  To simplify notation, let us also define  in what follows $v=u-g$, where $u$ is the solution, $g$ is the
\emph{time-derivative} part of $\mu_0$, and $\hat{\mu}_0 =\mu-g
_t -\mu_s  =f-\div (G) $; moreover we will write 
\[
\intq  w \ d\hat{\mu}_0 \quad {\rm meaning}\quad\intq fw\ dxdt+\intq G\cdot\nabla w\ dxdt,\]
for every $w\in\psob\cap\liq$.

\begin{definition}\label{1}
Assume \rife{a1}--\rife{a3}, let $\mu\in M(Q)$, and $\uo\in\luo$. A measurable function  $u$ is a
\emph{renormalized solution} of problem \rife{base} if there exists a  decomposition  $(f,G,g)$ of $\mu_0$ such that
   $v\in \plq\cap\pli $ for every $q<p-\frac{N}{N+1}$, $ T_k (v)\in \psob$ for every $k>0$, and for every
$S\in W^{2,\infty}(\re)$  such that $S'$ has 
 compact support on $\re$ and $S(0)=0$, we have
\begin{equation}\label{eq1}
\begin{array}{l}
\dys-\into S(\uo)\varphi(0)\ dx-\intt\langle \varphi_t,S(v)\rangle \ dt + \intq S'(v) a(t,x,\nabla u )\cdot\nabla \varphi \ dxdt\\ \\
+\dys \intq S''(v) a(t,x,\nabla u )\cdot\nabla v \ \varphi\
dxdt=\intq  S'(v) \varphi \ d\hat{\mu}_0,
\end{array}
\end{equation}
for every $\varphi\in\psob\cap L^{\infty}(Q)$, such that $\varphi_t\in\pw-1p'$, and 
$\varphi(T,x)=0$.
 Moreover, for every $\psi\in
C(\overline{Q})$ we have
\begin{equation}\label{rec}
\dys\lim_{n\rightarrow +\infty}\frac{1}{n}\int_{\{n\leq v<
2n\}}\a{u}\cdot\nabla v\ \psi\ dxdt=\intq\psi\ d\mu_{s}^{+}, 
\end{equation}
and
\begin{equation}\label{recm}
\dys\lim_{n\rightarrow +\infty}\frac{1}{n}\int_{\{-2n< v\leq 
-n\}}\a{u}\cdot\nabla v\ \psi\ dxdt=\intq\psi\ d\mu_{s}^{-}.
\end{equation} 
\end{definition}
\begin{remark}
First of all, notice that, thanks to our regularity assumptions and the choice of $S$, all  terms
in \rife{eq1}, \rife{rec}, and \rife{recm} are
well defined; in what follows we will often make a little abuse of notation referring  to $v$ as a \emph{renormalized solution} of problem \rife{base}. Observe that, the  condition $\varphi(T)=0$ is well defined in the sense of $\elle{2}$ thanks to the standard trace result mentioned above.

  Also, observe that \rife{eq1} implies that equation
\begin{equation}\label{eqd}
\begin{array}{l}
S(v)_t -\div(\a{u}S'(v)) + S''(v)\a{u}\cdot\nabla v 
=S'(v)f+G\cdot S''(v)\nabla v -\div(G S'(v))
\end{array}
\end{equation}
is satisfied in the sense of distributions. Since $S(v)_t \in \pw-1p' +\luq$, we can use in \rife{eqd} not only functions in $\C$ but also in $\psob\cap\liq$.
Let us also observe that, since for such $S$ we have  $S(v)=S(T_M (v))\in\psob$ (if $\supp  (S' )\subset [-M,M]$) and  $S(v)_t \in \pw-1p' +\luq$ then $S(v) \in C([0,T];\luo)$ (see \cite{po}). Hence,  one can say that the initial datum is achieved in a weak sense, that is $S(v)(0)=S(\uo)$ in $\luo$ for every $S$ as in Definition \ref{1} (see \cite{dpp} for more details). 

Finally, we want to stress that Definition \ref{1} actually extends all the notions of solutions studied up to now and, in particular, if $\mu\in M_{0}(Q)$ it turns out to coincide with the notion of \emph{entropy solution} as shown in \cite{dpr}; notice that, in this case, entropy and renormalized solutions turn out to be unique. 
\end{remark}

Let us first show the following interesting property of renormalized solutions; throughout the paper  $C$ will indicate any positive constant whose value may change from line to line.
\begin{proposition}\label{natur}
Let $v=u-g$ be a renormalized solution of problem \rife{base}.  Then, for every $k>0$, we have
\begin{equation} \label{natural}
\intq|\nabla T_k (v)|^p\ dxdt\leq \tilde{C}(k+1),
\end{equation}
where $\tilde{C}$ is a positive constant not depending on $k$. 
\end{proposition}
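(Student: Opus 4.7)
The plan is to test the distributional renormalized equation \rife{eqd} against $\varphi=T_k(v)$ for a suitable choice of $S=S_n$ with $n>k$, and to combine the coercivity estimate with the singular-measure asymptotics \rife{rec}--\rife{recm}. Fix $k>0$ and let $n>k$. Choose $S_n\in W^{2,\infty}(\re)$ odd, with $S_n(0)=0$, $|S_n'|\le 1$, $S_n'\equiv 1$ on $[-n,n]$, $\supp(S_n')\subset[-2n,2n]$, and $|S_n''|\le 1/n$. By the Remark following Definition~\ref{1} one may test \rife{eqd} (with $S=S_n$) against any $\varphi\in\psob\cap\liq$; we take $\varphi=T_k(v)$. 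Since $\{|v|<k\}\subset\{|v|\le n\}$, we have $S_n'(v)\equiv 1$ on $\supp\nabla T_k(v)$ and $T_k(v)=T_k(S_n(v))$ on all of $Q$, which is what makes the choice admissible.

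For the time term, the Remark ensures $S_n(v)\in C([0,T];\luo)$ with $S_n(v)(0)=S_n(\uo)$; setting $\Psi_k(s):=\int_0^s T_k(r)\,dr\ge 0$ (so $\Psi_k(s)\le k|s|$), a standard integration by parts in time yields
\[
\int_0^T\langle S_n(v)_t,T_k(v)\rangle\,dt=\into\Psi_k(S_n(v)(T))\,dx-\into\Psi_k(S_n(\uo))\,dx\ge -k\|\uo\|_{\luo}.
\]
For the elliptic term, writing $\nabla u=\nabla v+\nabla g$ and combining \rife{a1}--\rife{a2} with Young's inequality yields
\[
\int_Q S_n'(v)\,a(t,x,\nabla u)\cdot\nabla T_k(v)\,dxdt=\int_Q a(t,x,\nabla u)\cdot\nabla T_k(v)\,dxdt\ge c_1\int_Q|\nabla T_k(v)|^p\,dxdt-C_1,
\]
with $c_1>0$ depending only on $\alpha,\beta,p$ and $C_1$ depending on $\|g\|_{\psob}$ and $\|b\|_{L^{p'}(Q)}$ but not on $k$.

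The $S_n''$-contribution $R_n:=\int_Q S_n''(v)\,T_k(v)(a(t,x,\nabla u)-G)\cdot\nabla v\,dxdt$ is supported in $\{n<|v|<2n\}$ and satisfies $|S_n''(v)T_k(v)|\le k/n$. Invoking \rife{rec}--\rife{recm} with $\psi\equiv 1$ together with coercivity, one obtains $\tfrac{1}{n}\int_{\{n<|v|<2n\}}(|\nabla u|^p+|\nabla v|^p)\,dxdt\le C$ for $n$ large; H\"older's inequality (using $G\in L^{p'}(Q)$ and the growth bound \rife{a2}) then gives $|R_n|\le Ck$ uniformly for $n$ large. The two remaining right-hand side contributions satisfy $\bigl|\int_Q S_n'(v)T_k(v)f\,dxdt\bigr|\le k\|f\|_{\luq}$ and, by Young, $\bigl|\int_Q G\,S_n'(v)\cdot\nabla T_k(v)\,dxdt\bigr|\le \tfrac{c_1}{2}\int_Q|\nabla T_k(v)|^p\,dxdt+C_2$.

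Assembling the identity produced by testing \rife{eqd} with $T_k(v)$ and absorbing the $\tfrac{c_1}{2}$-term on the left gives
\[
\tfrac{c_1}{2}\int_Q|\nabla T_k(v)|^p\,dxdt\ \le\ C'+k\bigl(\|\uo\|_{\luo}+\|f\|_{\luq}+C\bigr),
\]
which is \rife{natural}. The main obstacle is the uniform bound $|R_n|\le Ck$: it is precisely here that \rife{rec}--\rife{recm} enter, preventing the gradient energy on the transition strip $\{n<|v|<2n\}$ from growing faster than linearly in $n$ and thereby guaranteeing that the constant $\tilde C$ in \rife{natural} is independent of $k$.
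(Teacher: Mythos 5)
Your argument is correct, but it takes a genuinely different route from the paper's. You prove the estimate by plugging $\varphi=T_k(v)$ and $S=S_n$ ($n>k$) into the renormalized equation \rife{eqd}, separating the time term (bounded below by $-k\|\uo\|_{\luo}$ after the $\Psi_k$ integration by parts), the coercive elliptic term, the two right-hand-side terms, and the $S_n''$-contribution $R_n$ supported on the strip $\{n<|v|<2n\}$; you then control $R_n$ uniformly in $n$ (for $n$ large) by the linear-growth estimate $\tfrac1n\int_{\{n<|v|<2n\}}|\nabla u|^p\le M$ that follows from \rife{a1}, \rife{rec}, \rife{recm}. The paper's proof is shorter and avoids the equation entirely after that point: starting from exactly the same annulus estimate, it simply decomposes $\{|v|<k\}$ dyadically into strips $\{2^m\le|v|<2^{m+1}\}$, $m=0,\dots,[\log_2 k]+1$, and sums the geometric series to get $\int_{\{|v|<k\}}|\nabla u|^p\le C(k+1)$, then passes to $\nabla T_k(v)=\nabla u-\nabla g$ on $\{|v|<k\}$. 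So both approaches hinge on the same consequence of the reconstruction properties, but the paper turns it directly into the linear-in-$k$ bound by dyadic summation, while you re-derive it through an energy estimate; the dyadic route is more economical here, whereas your energy route is the one that also illuminates why the time term and the soft part of $\mu$ contribute only $O(k)$, at the cost of having to justify the admissibility of $\varphi=T_k(v)$ in \rife{eqd} and the time integration by parts (both of which the Remark after Definition~\ref{1} does license, so your step is fine but requires that extra care).

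One minor caveat worth making explicit in your write-up: the uniform bound $|R_n|\le Ck$ requires choosing $n=n(k)>k$ large enough for the annulus estimate to be in force; since the constant there is independent of $n$ and the $k/n$ prefactor cancels the $O(n)$ growth, any such $n$ works, but this should be stated rather than left implicit under ``for $n$ large.''
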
 
\begin{proof}
Obviously it is enough to prove \rife{natural} for $k$ large enough.
First of all observe that, thanks to \rife{a1}, \rife{rec} and \rife{recm}, using Young's inequality one can easily show that there exists a positive constant $M$ such that
\begin{equation}\label{natur0}
\frac{1}{n}\int_{\{n\leq |v|<
2n\}}|\nabla u|^p  dxdt\leq M.
\end{equation}
On the other hand, using the definition of $v$, we have
\begin{equation}\label{natur1}
\intq|\nabla T_k (v)|^p\ dxdt\leq C\int_{\{|v|<k\}}|\nabla u|^p dxdt +C.
\end{equation}  
Hence, we have to control the first term on the right hand side of \rife{natur1}; using \rife{natur0}, we have
\[
\begin{array}{l}
   \dys \int_{\{|v|<k\}}|\nabla u|^p dxdt\leq\sum_{n=0}^{[\log_{2}{k}]+1}\int_{\{2^n\leq|v|<2^{n+1}\}}|\nabla u|^p dxdt +\int_{\{0\leq|v|<1\}}|\nabla u|^p dxdt \\\\
   \dys \leq M\sum_{n=0}^{[\log_{2}{k}]+1}2^n\ \  + \ C=M ( 2^{[\log_{2}{k}]+2}-1) +C\leq C(k+1),
\end{array}
\] 
that, together with \rife{natur1} yields \rife{natural}.

\end{proof}\medskip

The main result of this paper is the following one:

\begin{theorem}\label{esi}
Assume \rife{a1}--\rife{a3}, let $\mu\in M(Q)$ and $\uo\in\luo$. Then there exists a renormalized solution of problem \rife{base}.
\end{theorem}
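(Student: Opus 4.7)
The plan is to argue by approximation. Using the decomposition \rife{dec}, write $\mu=f-\div(G)+g_t+\mu_s^+-\mu_s^-$ with $f\in\luq$, $G\in (L^{p'}(Q))^N$ and $g\in\psob$. I would regularize each ingredient separately: replace $f$ by $f_\vare\in C^\infty_c(Q)$ converging to $f$ in $\luq$; approximate the positive and negative parts $\mu_s^\pm$ by nonnegative smooth functions $\lep,\lem$ converging to $\mu_s^\pm$ in the narrow topology of measures (tightly, with uniformly bounded masses); keep $G$ and $g$ unchanged; and pick $\uoe\in C^\infty_c(\Omega)$ with $\uoe\to\uo$ in $\luo$. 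For each $\vare$ the resulting right-hand side lies in $L^{p'}(0,T;W^{-1,p'}(\Omega))$ and $\uoe\in L^2(\Omega)$, so Remark \ref{regular} supplies a unique variational solution $\ue\in W$.

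Set $\ve=\ue-g$. Testing the approximate equation against $T_k(\ve)$ and using \rife{a1} yields the uniform bound $\intq|\nabla T_k(\ve)|^p\,dxdt\le C(k+1)$, in the spirit of Proposition \ref{natur}; Boccardo--Gallou\`et interpolation then gives boundedness of $\ve$ in $\plq$ for every $q<p-\frac{N}{N+1}$ and in $\pli$. A Simon/Aubin-type compactness argument applied to the truncates, combined with the equi-integrability of $\ve$, produces, along a subsequence, $\ve\to v$ a.e.\ in $Q$ and $T_k(\ve)\rightharpoonup T_k(v)$ weakly in $\psob$ for every $k>0$, with $v$ belonging to the function spaces required by Definition \ref{1}.

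The main obstacle, and the heart of the argument (Section \ref{6}), is upgrading weak to strong convergence $T_k(\ve)\to T_k(v)$ in $\psob$. The obstruction is the singular part of $\mu$: the approximations $\lep,\lem$ concentrate, as $\vare\to 0$, on sets of zero parabolic $p$-capacity where $T_k(v)$ is only $\capp$-quasi defined, so one cannot simply plug $T_k(\ve)-T_k(v)$ into the equation as in the $L^1$ case of \cite{blm}, \cite{p}. The remedy, adapted from \cite{dpp} and \cite{dmop}, is to combine a Landes-type time regularization $T_k(v)_\nu$ of $T_k(v)$ with multiplication by capacitary cutoffs $\fpe,\fme$ that are equal to $1$ on a large portion of $Q$ and vanish in a neighborhood of the concentration sets of $\mu_s^\pm$; such cutoffs exist thanks to Theorem \ref{cap2} and the capacity framework recalled in Section \ref{c1}. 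Using the strict monotonicity \rife{a3} and letting first $\vare\to 0$, then $\nu\to\infty$, then the capacitary parameters $\delta,\eta\to 0$, one forces the quantity $\int_{\{|\ve|\le k\}}(\a{\ue}-a(t,x,\nabla T_k(v)))\cdot\nabla(T_k(\ve)-T_k(v))\,dxdt$ to vanish, from which $\nabla T_k(\ve)\to\nabla T_k(v)$ strongly in $(L^p(Q))^N$.

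With strong convergence of truncates in hand, passing to the limit in \rife{eq1} is essentially routine: for $S\in W^{2,\infty}(\re)$ with $\supp S'\subset[-M,M]$, every nonlinear term depends only on $T_M(\ve)$ and its gradient, so Proposition \ref{quell} together with Vitali's theorem delivers convergence of the $S'(\ve)$ and $S''(\ve)$ terms. The singular approximations $\lep,\lem$ drop out of the renormalized equation because $S'(\ve)\to 0$, in a suitable sense, on the sets where these measures concentrate in the limit; the lost information is precisely the content of the recession identities \rife{rec} and \rife{recm}, which are recovered by testing the approximate equation separately with cutoffs of the form $\frac{1}{n}(T_{2n}(\ve)-T_n(\ve))\psi$ and its negative counterpart, and exploiting the narrow convergence $\lep\to\mu_s^+$, $\lem\to\mu_s^-$ against $\psi\in C(\overline{Q})$.
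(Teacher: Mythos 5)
Your proposal follows the same overall roadmap as the paper: regularize the data, solve the approximating variational problems, derive the uniform truncation estimates, upgrade weak to strong convergence of the truncates via capacitary cutoffs and the Landes regularization, and finally pass to the limit. The one structural difference worth noting is that you keep $G$ and $g$ unregularized; the paper smooths them too, and the smoothing of $g$ is not cosmetic, because $g_t$ is only a distribution (defined by duality against $\varphi_t$) and one needs $g^\vare_t$ to be a genuine function so that $\mu^\vare$ lies in $L^{p'}(Q)$ as required by Remark \ref{regular}, and so that $\ve=\ue-g^\vare$ has a well-defined time derivative.

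The genuine gap is in the reconstruction identities \rife{rec} and \rife{recm}. Testing the approximate equation with $\frac1n\bigl(T_{2n}(\ve)-T_n(\ve)\bigr)\psi$ and invoking narrow convergence of $\lep,\lem$ does not close the argument, because this test function picks up contributions from both $\lep$ and $\lem$, and nothing in your sketch shows that the contribution of $\lem$ vanishes in the identity for $\msp$ (and vice versa). The point the paper needs, and proves, is that $\ve$ becomes uniformly large on the support of $\lep$ and uniformly negative on the support of $\lem$, which is extracted from the capacitary cutoffs $\pdp,\pdm$ of Lemma \ref{acp} together with the estimates \rife{lambdae}--\rife{lambdaem} and, most importantly, Lemma \ref{dazero}, which is precisely the statement that the ``wrong-sign'' energy terms $\frac1n\int_{\{n\le\ve<2n\}}\a{\ue}\cdot\nabla\ve\,\pem$ vanish. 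Without Lemma \ref{dazero} or an equivalent, your proposed test functions would leave an uncontrolled term coming from the opposite singular part. The paper also derives the reconstruction identities not by directly testing the approximate equation, but by subtracting the distributional formulation of $\ue$ (tested with $(1-\pdm)\varphi$) from the already-established renormalized formulation of $u$ with $S=\overline{H}_n$; this subtraction isolates the recession term cleanly, and you would need to reproduce something of this kind. Your description of the cutoffs as ``equal to $1$ on a large portion of $Q$'' is the complement of the paper's $\pdp,\pdm$ (which are $\equiv1$ on the concentration sets and small in the $\mathcal{S}$-norm); this is not an error, just the same splitting $\fde$ versus $1-\fde$ seen from the other side, and the proof needs both halves.
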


\setcounter{equation}{0}
\section{$\capp$-quasi continuous representative of a renormalized solution}\label{4}

Now we prove some essential properties of renormalized solutions; in particular we shall prove that a renormalized solution (actually the  \emph{regular translation} of it, $v$) is finite cap$_{p}$-quasi everywhere and  admits a cap$_{p}$-quasi continuous representative (which  will be always referred to). To this aim we introduce the following function:  

\begin{equation}\label{hn}
\dys H_n (s)=
\begin{cases}
\quad 1 & \text{if}\ \ |s|\leq n,\\\\
\dys\frac{2n-s}{n}&\text{if}\ \ n<s\leq 2n,\\\\
\dys\frac{2n+s}{n}&\text{if}\ \ -2n<s\leq -n,\\\\
\quad 0&\text{if}\ \ |s|>2n.
\end{cases}
\end{equation}

\begin{figure}[!ht]
\centerline{
\begin{picture}(200,80)(-100,-35)
\put(-100,0){\vector(1,0){200}}
 \put(0,-35){\vector(0,1){80}}
\thicklines
 \put(-100,0){\line(1,0){40}}
  \put(-60,0){\line(3,2){30}}
   \put(-30,20){\line(1,0){60}}
 \put(30,20){\line(3,-2){30}}
 \put(60,0){\line(1,0){40}}
\thinlines
 \multiput(30,0)(0,7){3}{\line(0,1){3}}
  \multiput(-30,0)(0,7){3}{\line(0,1){3}}
 \put(28,-10){$\scriptstyle n$}
 \put(-36,-10){$\scriptstyle -n$}
 \put(60,-10){$\scriptstyle 2n$}
 \put(-68,-10){$\scriptstyle -2n$}
 \put(-8,24){$\scriptstyle 1$}
 \put(92,-10){$\scriptstyle s$}
 \put(4,38){$\scriptstyle H_n (s)$}
\end{picture}
}
\end{figure}

 Let us also introduce  another auxiliary function that we will often use in the following; this function can be introduced in terms of  $H_{n}(s)$:
\begin{equation}\label{bn} B_n(s)=1-H_n(s).\end{equation}

\begin{figure}[!ht]
\centerline{
\begin{picture}(200,80)(-100,-35)
\put(-100,0){\vector(1,0){200}}
 \put(0,-35){\vector(0,1){80}}
\thicklines
 \put(-100,20){\line(1,0){40}}
  \put(-60,20){\line(3,-2){30}}
   \put(-30,0){\line(1,0){60}}
 \put(30,0){\line(3,2){30}}
 \put(60,20){\line(1,0){40}}
\thinlines
 \multiput(60,0)(0,7){3}{\line(0,1){3}}
 \multiput(-60,0)(0,7){3}{\line(0,1){3}}
 \multiput(0,20)(7,0){9}{\line(1,0){3}}
  \multiput(-60,20)(7,0){9}{\line(1,0){3}}
 \put(28,-10){$\scriptstyle n$}
 \put(-38,-10){$\scriptstyle -n$}
 \put(60,-10){$\scriptstyle 2n$}
 \put(-70,-10){$\scriptstyle -2n$}
 \put(-8,24){$\scriptstyle 1$}
 \put(92,-10){$\scriptstyle s$}
 \put(4,38){$\scriptstyle B_n (s)$}
\end{picture}
}
\end{figure}

We will also use the following notation: if
$ F$ is a function of one real variable, then $\overline{F}$ will denote its primitive function, that is $\dys\overline{F}(s)=
\int_{0}^{s}F(r)\ dr$; we define the space ${\mathcal S}$ 

\begin{equation}\label{sgrande}
{\mathcal S} = \{ u\in \psob; u_t \in\pw-1p' +  \luq \},
\end{equation}
endowed with its natural norm $\|u\|_{{\mathcal S} }=\|u\|_{\psob} +\|u_t \|_{ \pw-1p' + \luq}$,
 and its subspace $W_2 $ as 
\begin{equation}\label{w2}
W_2 = \{ u\in \psob\cap\liq; u_t \in\pw-1p' +  \luq \},
\end{equation}
endowed with its natural norm $$\|u\|_{W_2}=\|u\|_{\psob}+\|u\|_{\liq}\newline +\|u_t \|_{ \pw-1p' + \luq};$$ for any $p>1$, following the outlines of \cite{dpp} let us also define $\tilde{W}\equiv W_1 \cap L^{\infty}(0,T; L^2 (\Omega))$ and for all $z\in\tilde{W}$, let us denote
$$
[z]_{W} = \|z\|_{\psob}^{p} + \| z_t \|_{\pw-1p'}^{p'} + \|z\|_{\lild}^{2}\,\,\, .
$$ 

In the proof of Lemma $2.17$ of \cite{dpp} the authors show that

\begin{lemma}\label{dpp}
If $u\in W_2 $, then there exists $z\in \tilde{W}$ such that $|u|\leq z$ and 
\begin{equation}\label{ast}
\begin{array}{l}
\dys [z]_{W}\leq C \left( \|u\|^{p}_{\psob}+\|u^{1}_{t}\|_{\pw-1p'}^{p'} \right.\\\\ 
\dys\left.+\|u\|_{\liq}\|u^{2}_{t}\|_{\luq}+ \|u\|^{2}_{\lild} \right),
\end{array}
\end{equation}
where  $u^{1}_{t}\in\pw-1p'$, $u^{2}_{t}\in\luq$ is any decomposition of $u_t $, that is $u_t =u^{1}_{t}+u^{2}_{t}$.
\end{lemma}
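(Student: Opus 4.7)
The difficulty is that $u$ itself is not a valid candidate for $z$: although $u\in\psob\cap\liq$ has the requisite spatial regularity, its time derivative $u_t=u^1_t+u^2_t$ contains the $\luq$ piece $u^2_t$, which does not lie in $\pw-1p'$ and so keeps $u$ out of $\tilde W$. A majorant $z\geq |u|$ must therefore strictly exceed $|u|$, transferring the effect of $u^2_t$ into pointwise size rather than dual regularity in time.

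The natural route is to construct $z$ as the unique weak solution in $\tilde W$ of an auxiliary linear parabolic Cauchy--Dirichlet problem of the form
\begin{equation*}
z_t-\Delta z=\xi_1+|u^2_t|\ \text{in }Q,\quad z(0,\cdot)=|u_0|,\quad z=0\text{ on }(0,T)\times\partial\Omega,
\end{equation*}
where $\xi_1\in\pw-1p'$ encodes $\mathrm{sgn}(u)\,u^1_t-\Delta|u|$ (both summands being legitimately in $\pw-1p'$ thanks to $u\in\psob$ and $u^1_t\in\pw-1p'$). The algebraic point is that $w:=z-|u|$ is then a weak solution of a linear heat equation with nonnegative source $|u^2_t|-\mathrm{sgn}(u)\,u^2_t\geq 0$, zero initial datum, and nonnegative lateral boundary values; the weak maximum principle delivers $w\geq 0$, that is $|u|\leq z$.

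The estimate \rife{ast} would then follow from standard energy arguments on the auxiliary problem. Testing the equation against $z$ produces the $\lild$ bound and an $L^2$-type control on $\nabla z$, and, after absorbing the $-\Delta|u|$ contribution via $\|u\|_{\psob}$ and Young's inequality, also the $\psob$ bound on $z$. The characteristic cross term $\|u\|_{\liq}\|u^2_t\|_{\luq}$ arises when pairing the $\luq$ source $|u^2_t|$ with $z$, using an a posteriori bound $\|z\|_{\liq}\leq C(\|u\|_{\liq}+\|u^2_t\|_{\luq})$ obtained by a Stampacchia-type truncation argument on the auxiliary equation. Finally, $\|z_t\|_{\pw-1p'}$ is recovered by reading the auxiliary equation in dual form once the other norms are controlled.

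\textbf{Main obstacle.} The delicate point is matching the mixed homogeneity of \rife{ast}---specifically producing the product $\|u\|_{\liq}\|u^2_t\|_{\luq}$ rather than a quadratic expression in either factor. This forces the $L^\infty$ control of $z$ to be established \emph{independently} of the energy estimate, so that $|u^2_t|\in\luq$ can be tested against a bounded function. Ensuring that the $\liq$ bound on $z$ uses only $\|u\|_{\liq}$ and $\|u^2_t\|_{\luq}$, and not the $\psob$ norm, is where I expect the real work, since any leakage would spoil the balance of the final estimate.
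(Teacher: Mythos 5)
Your construction does not land in the target space, and this is fatal to the whole plan. You define $z$ as a solution of the auxiliary problem $z_t-\Delta z=\xi_1+|u^2_t|$ with $|u^2_t|\in\luq$ appearing on the right-hand side. But then $z_t=\Delta z+\xi_1+|u^2_t|$, so $z_t$ again lives in $\pw-1p'+\luq$ and not in $\pw-1p'$: your $z$ lies in $W_2$ but not in $W_1$, hence not in $\tilde W$. The entire purpose of producing a majorant is precisely to trade the $L^1$ part of the time derivative for pointwise size, so that the majorant has a genuinely better time derivative than $u$; carrying the $L^1$ source into the auxiliary equation reproduces the very pathology you are trying to eliminate. (Note $\luq\not\subset\pw-1p'$ whatever $p$ is, both for the lack of Sobolev embedding of $L^1(\Omega)$ into $W^{-1,p'}(\Omega)$ when $p\le N$ and because $L^1(0,T)\not\subset L^{p'}(0,T)$.)

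Several of the intermediate steps are also problematic. The quantity $\mathrm{sgn}(u)\,u^1_t$ is not a well-defined element of $\pw-1p'$, since multiplication of a $W^{-1,p'}$ distribution by a merely bounded measurable function is not meaningful; similarly $-\Delta|u|\in\pw-1p'$ requires $\nabla|u|\in (L^{p'}(Q))^N$, which you only get from $u\in\psob$ when $p\ge 2$. Using the Laplacian rather than a $p$-homogeneous operator means the natural energy space for $z$ is $L^2(0,T;H^1_0(\Omega))$, which is not contained in $\psob$ when $p>2$, so the required $\psob$ bound on $z$ is not available. Finally, the key a posteriori bound $\|z\|_{\liq}\leq C(\|u\|_{\liq}+\|u^2_t\|_{\luq})$ is false: there is no $L^1$-to-$L^\infty$ smoothing for the heat equation (Stampacchia's iteration needs the source in $L^q$ above a dimensional threshold, far above $q=1$), and in fact with an $L^1$ right-hand side the solution is generically unbounded and not even in $L^2(0,T;H^1_0)$, so testing against $z$ is not legitimate. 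Even granting your claimed bound, the resulting cross term would contain $\|u^2_t\|_{\luq}^2$, which is not controlled by the right-hand side of \rife{ast}; the target estimate forces an $L^\infty$ bound on $z$ in terms of $\|u\|_{\liq}$ alone. For the paper, the content of this lemma is taken directly from the proof of Lemma~2.17 in Droniou--Porretta--Prignet, where the majorant is built by a different mechanism that does not feed the $L^1$ part of $u_t$ back in as a source; the construction must be rethought along those lines rather than through a linear heat equation with an $L^1$ source.
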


\begin{remark}
Observe that $u^{1}_{t}$ and $u^{2}_{t}$ can be chosen such that
$$
\|u^{1}_{t}\|_{\pw-1p'} + \|u^{2}_{t}\|_{\luq}\leq 2 \|u_ t \|_{\pw-1p' + \luq},
$$
and so \rife{ast} easily implies
\begin{equation}\label{astast}
\begin{array}{l}
\dys [z]_{W}\leq C \left( \|u\|^{p}_{\psob}+\|u_{t}\|_{\pw-1p'+\luq}^{p'} \right.\\\\ 
\dys\left.+\|u\|_{\liq}\|u_{t}\|_{\pw-1p'+\luq}+ \|u\|^{2}_{\lild} \right),
\end{array}
\end{equation}
that was a result of Lemma $2.17$ in \cite{dpp}.
For the sake of simplicity let us define
$$
\begin{array}{l}
[u]_{\ast}= \|u\|^{p}_{\psob}+\|u^{1}_{t}\|_{\pw-1p'}^{p'} +\|u\|_{\liq}\|u^{2}_{t}\|_{\luq}+ \|u\|^{2}_{\lild},
\end{array}
$$
and 
$$
\begin{array}{l}
 [u]_{\ast\ast} =\|u\|^{p}_{\psob}+\|u_{t}\|_{\pw-1p'+\luq}^{p'} \\\\ +\|u\|_{\liq}\|u_{t}\|_{\pw-1p'+\luq}+ \|u\|^{2}_{\lild}. 
\end{array}
$$
\end{remark}

Now our aim is to prove the following result: 
\begin{theorem}\label{cqc}
Let $u\in W_2 $; then $u$ admits a unique cap$_p $-quasi continuous representative defined cap$_p$-quasi everywhere.      
\end{theorem}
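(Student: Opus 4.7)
The plan is to regularize $u$ by a sequence $u_n\in W_1$, apply the $\capp$-quasi continuous existence result of \cite{dpp} to each $u_n$, and then show via Lemma \ref{dpp} that the sequence is Cauchy with respect to $\capp$. For the approximation I would take $u_n=u\ast\rho_n$, the convolution with a standard time mollifier (after suitably extending $u$ outside $[0,T]$). Because $u\in\liq$, the time derivative $(u_n)_t=u\ast\rho_n'$ lies in $\liq\subseteq\pw-1p'$, so $u_n\in W_1$ as required. Standard mollifier arguments then yield $u_n\to u$ in $\psob$, $\|u_n\|_{\liq}\le\|u\|_{\liq}$, $u_n\to u$ a.e.\ on $Q$, and, by mollifying each piece of a decomposition $u_t=f_1+f_2$, $(u_n)_t\to u_t$ in $\pw-1p'+\luq$. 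I would also need $u_n\to u$ in $\lild$, which I would deduce from the continuity $u\in C([0,T];L^2(\Omega))$; the latter follows by integrating the decomposition of $u_t$ to get $u\in C([0,T];W^{-1,p'}(\Omega)+L^1(\Omega))$ and then interpolating with $u\in\liq$.

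Next I would apply Lemma \ref{dpp} to the difference $w_{n,m}:=u_n-u_m\in W_2$, producing $z_{n,m}\in\tilde W$ with $|w_{n,m}|\le z_{n,m}$ and $[z_{n,m}]_W\le C\,[w_{n,m}]_{\ast\ast}$ via the bound \rife{astast}. All four summands of $[w_{n,m}]_{\ast\ast}$ should tend to zero: the $\psob$-, $(\pw-1p'+\luq)$- and $\lild$-norms of $w_{n,m}$ by the previous paragraph, while the cross term $\|w_{n,m}\|_{\liq}\|(w_{n,m})_t\|_{\pw-1p'+\luq}$ is a product of a factor bounded by $2\|u\|_{\liq}$ and a vanishing one. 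Thus $[z_{n,m}]_W\to 0$, and hence $\|z_{n,m}\|_W\to 0$ since each summand of $\|z\|_W$ is controlled by a positive power of $[z]_W$. Combined with $\{|w_{n,m}|>\lambda\}\subseteq\{z_{n,m}>\lambda\}$ and the very definition of $\capp$, this gives
\[
\capp\bigl(\{|u_n-u_m|>\lambda\}\bigr)\le\frac{\|z_{n,m}\|_W}{\lambda}\longrightarrow 0\qquad\text{for every }\lambda>0.
\]

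Lastly I would extract a subsequence $\{u_{n_k}\}$ with $\capp(A_k)\le 2^{-k}$, where $A_k:=\{|u_{n_{k+1}}-u_{n_k}|>2^{-k}\}$. Setting $B_j:=\bigcup_{k\ge j}A_k$, countable subadditivity of $\capp$ gives $\capp(B_j)\le 2^{1-j}$ while on $Q\setminus B_j$ the sequence is uniformly Cauchy. Each $u_{n_k}\in W_1$ admits a $\capp$-quasi continuous representative $\tilde u_{n_k}$ by \cite{dpp}, so discarding a further set of arbitrarily small capacity on which some $\tilde u_{n_k}$ fails to be continuous, the uniform convergence provides a continuous limit $\tilde u$, i.e.\ a $\capp$-quasi continuous function equal to $u$ almost everywhere. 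Uniqueness follows from the standard fact that a $\capp$-quasi continuous function vanishing a.e.\ vanishes $\capp$-quasi everywhere. I expect the main obstacle to be the construction in the first step, and specifically the $\lild$ convergence of the mollification, which hinges on the non-trivial continuity $u\in C([0,T];L^2(\Omega))$ valid for every $u\in W_2$.
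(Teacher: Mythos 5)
Your argument follows the same Cauchy-in-capacity scheme as the paper, and it is essentially correct, but the two proofs diverge on the choice of approximants and on how the capacitary estimate is obtained. The paper approximates $u\in W_2$ by functions $u^m\in C^\infty_0([0,T]\times\Omega)$ (citing \cite{dro}, \cite{dpp}), extracts a fast subsequence, and applies the capacitary estimate of Lemma \ref{cast}: since each $u^m$ is smooth, only Step $1$ of that lemma is needed, and the superlevel sets $\{|u^{m+1}-u^m|>2^{-m}\}$ are open, so the definition of $\capp$ applies directly. You instead mollify only in time, landing in $W_1$ rather than in $\regot$; this avoids the nontrivial density claim but forces you to borrow from \cite{dpp} the existence of a $\capp$-quasi continuous representative for each $u_n\in W_1$. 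Your derivation of $\lild$-convergence from $u\in C([0,T];L^2(\Omega))$ is correct (the paper uses the same fact, though implicitly, via the embedding $W_2\hookrightarrow C([0,T];L^1(\Omega))$ of \cite{po} combined with the $L^\infty$ bound). Your treatment of $[w_{n,m}]_{\ast\ast}$ term by term and your diagonal extraction with countable subadditivity are the same as the paper's.

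There is, however, one step that is stated too loosely. You claim that
\[
\capp\bigl(\{|u_n-u_m|>\lambda\}\bigr)\le\frac{\|z_{n,m}\|_{W}}{\lambda}
\]
follows from $\{|w_{n,m}|>\lambda\}\subseteq\{z_{n,m}>\lambda\}$ and ``the very definition of $\capp$''. It does not: the definition of $\capp$ for a non-open Borel set passes through open supersets, and neither $\{|w_{n,m}|>\lambda\}$ nor $\{z_{n,m}>\lambda\}$ need be open, since your mollification is only in time and $z_{n,m}\in\tilde W$ carries no continuity. What actually rescues the estimate is that each $u_n$, hence $w_{n,m}$, admits a $\capp$-quasi continuous representative; this is precisely the situation handled by Step $2$ of Lemma \ref{cast}, which produces, for any $\vare>0$, an open set of the form $\{|w_{n,m}|>\lambda\}\cup A_\vare$ testable by $z_{n,m}/\lambda + w_\vare$, and yields $\capp(\{|w_{n,m}|>\lambda\})\le \frac{C}{\lambda}\max\bigl([w_{n,m}]_{\ast}^{1/p},[w_{n,m}]_{\ast}^{1/p'}\bigr)$. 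Once you cite Lemma \ref{cast} (rather than the bare definition) at this point, the rest of your argument goes through and is a legitimate, slightly more self-contained alternative to the paper's smooth approximation.
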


To prove Theorem \ref{cqc} we need first a \emph{capacitary estimate}, this is the goal of next result:
\begin{lemma}\label{cast}
Let $u\in W_2 $ be a cap$_p$-quasi continuous function. Then, for every $k>0$, 
\begin{equation}\label{cast1}
\capp(\{|u|>k\})\leq \dys\frac{C}{k}\max{\left([u]^{\frac{1}{p}}_{\ast} , [u]^{\frac{1}{p'}}_{\ast}\right)}.
\end{equation}
\end{lemma}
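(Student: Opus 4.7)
The plan is to apply Lemma~\ref{dpp} to replace $u$ by a sufficiently regular majorant $z$ of $|u|$ living in $\tilde W$, and then use $z/k$ as the natural competitor in the definition of $\capp(\{|u|>k\})$. Quasi-continuity of $W$-functions is what makes the relevant superlevel sets (quasi-)open and, hence, makes this competitor admissible.

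\textbf{Step 1 (majorant).} Apply Lemma~\ref{dpp} to obtain $z\in\tilde W$ with $|u|\le z$ a.e.\ on $Q$ and $[z]_W\le C\,[u]_\ast$. Unpacking the definition of $[z]_W$ gives the three independent controls
\[
\|z\|_{\psob}\le C\,[u]_\ast^{1/p},\qquad \|z_t\|_{\pw-1p'}\le C\,[u]_\ast^{1/p'},\qquad \|z\|_{\lild}\le C\,[u]_\ast^{1/2}.
\]

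\textbf{Step 2 ($W$-norm of $z$).} Since $V=\sob\cap L^2(\Omega)$, one has
\[
\|z\|_{L^p(0,T;V)}\le \|z\|_{\psob}+T^{1/p}\|z\|_{\lild},
\]
while the dual embedding $\pw-1p'\hookrightarrow L^{p'}(0,T;V')$ (induced by $V\hookrightarrow\sob$) yields $\|z_t\|_{L^{p'}(0,T;V')}\le C\|z_t\|_{\pw-1p'}$. Collecting these with Step~1,
\[
\|z\|_W\le C\bigl([u]_\ast^{1/p}+[u]_\ast^{1/2}+[u]_\ast^{1/p'}\bigr)\le C\max\bigl([u]_\ast^{1/p},[u]_\ast^{1/p'}\bigr),
\]
where the last inequality uses that $1/2$ lies between $1/p$ and $1/p'$, so $[u]_\ast^{1/2}$ is always dominated by the larger of the two extreme exponents.

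\textbf{Step 3 (capacitary bound).} Identifying $u$ and $z$ with their $\capp$-quasi continuous representatives (available in $W_1$ by Section~\ref{c1}), the inequality $|u|\le z$ persists $\capp$-quasi everywhere, so $\{|u|>k\}$ is contained, up to a set of zero capacity, in the quasi-open set $\{z>k\}$. On $\{z>k\}$ the function $z/k\in W$ satisfies $z/k\ge 1$; thus $z/k$ is admissible and
\[
\capp(\{|u|>k\})\le \capp(\{z>k\})\le \Bigl\|\frac{z}{k}\Bigr\|_W=\frac{\|z\|_W}{k}\le \frac{C}{k}\max\bigl([u]_\ast^{1/p},[u]_\ast^{1/p'}\bigr).
\]

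\textbf{Main obstacle.} The delicate point is Step~3: the definition of $\capp$ is phrased for \emph{open} sets, whereas $\{|u|>k\}$ and $\{z>k\}$ are only Borel. I expect to bridge this by mollifying $z$ to $z^\sigma$ (so that $\{z^\sigma>k(1-\eta)\}$ is genuinely open and contains $\{z>k\}$ up to a set of small capacity, using quasi-continuity), bounding the capacity of this open thickening by $\|z^\sigma/(k(1-\eta))\|_W$, and then letting $\sigma\downarrow 0$ and $\eta\downarrow 0$ via continuity of the $W$-norm under mollification and outer regularity of $\capp$ (in the spirit of Remark~\ref{rdefi2}).
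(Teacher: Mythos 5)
Your Steps~1 and~2 reproduce the paper's Step~1 correctly: you obtain $z\in\tilde W$ from Lemma~\ref{dpp}, unpack $[z]_W$ into the three controls, and verify the continuous embedding $\tilde W\hookrightarrow W$ so that $\|z\|_W\leq C\|z\|_{\tilde W}\leq C([u]_\ast^{1/p}+[u]_\ast^{1/2}+[u]_\ast^{1/p'})$; the observation that the middle term $[u]_\ast^{1/2}$ is always dominated by $\max([u]_\ast^{1/p},[u]_\ast^{1/p'})$ because $1/2$ sits between $1/p$ and $1/p'$ is exactly what the paper uses, and it is correct whether $[u]_\ast\geq 1$ or $<1$.

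The genuine gap is in Step~3, and you correctly diagnose its location but do not close it. The paper does \emph{not} mollify $z$; it structures the proof in two cases. First, for $u\in\regot$ the set $\{|u|>k\}$ is honestly open and $z/k$ is an admissible competitor directly. Then, for a general $\capp$-quasi continuous $u\in W_2$, it uses the quasi-continuity of $u$ itself (not of $z$): for $\vare>0$ pick an open $A_\vare$ with $\capp(A_\vare)\leq\vare$ such that $u_{\rvert_{Q\setminus A_\vare}}$ is continuous; then $\{|u|>k\}\cup A_\vare$ is \emph{open} in $Q$, and one tests its capacity with $w+z/k$ where $w\in W$, $w\geq\chi_{A_\vare}$, $\|w\|_W\leq 2\vare$. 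This yields $\capp(\{|u|>k\})\leq\|z\|_W/k+2\vare$, and $\vare\to 0$ finishes. Your proposed mollification route is murkier than it looks: to get $\{z>k\}$ contained, up to small capacity, in $\{z^\sigma>k(1-\eta)\}$, you would need $z^\sigma\to z$ quasi-uniformly, and establishing that is essentially the content of Theorem~\ref{cqc}, which in the paper is proved \emph{using} Lemma~\ref{cast} (applied to smooth differences) --- so the argument risks circularity. Also, $\capp(\{|u|>k\})\leq\capp(\{z>k\})$ requires promoting the a.e.\ inequality $|u|\leq z$ to a quasi-everywhere inequality between quasi-continuous representatives, an additional fact that the paper's argument avoids by working with $u$'s exceptional set directly and testing with $w+z/k$, which needs only $|u|\leq z$ a.e. Replacing your Step~3 by the paper's exceptional-set argument (keyed to the hypothesized quasi-continuity of $u$, not of $z$) closes the gap cleanly.
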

\begin{proof}
We divide the proof in two steps.\newline
 \emph{Step $1$.} Let $u\in\regot$, so  that the set $\{|u|>k\}$ is open and we can estimate its $p$-capacity in terms of the norm of $W$. By Lemma \ref{dpp} there exists $z\in\tilde{W}$ such that $|u|\leq z$ and $\rife{ast}$ holds true, so, recalling that $\tilde{W}\subseteq W $ continuously, $\dys\frac{z}{k}$ is a good function to test $p$-capacity of the set $\{|u|>k\}$ and we can write

\begin{eqnarray*}
\dys\capp(\{|u|>k\})\leq\frac{\|z\|_{W}}{k}\leq \frac{C}{k}\|z\|_{\tilde{W}}
\dys=\frac{C}{k}\left(\|z\|_{\psob}\right. \\\\ \dys \left.+\|z\|_{\lild}+\|z_t \|_{\pw-1p'}\right)\leq
\frac{C}{k}\left([u]^{\frac{1}{p}}_{\ast}+[u]^{\frac{1}{2}}_{\ast}+[u]^{\frac{1}{p'}}_{\ast}\right)\\\\ \dys \leq \frac{C}{k}\max\left([u]^{\frac{1}{p}}_{\ast},[u]^{\frac{1}{p'}}_{\ast}\right),\ \ \ \ \ \ \ \ \ \ \ \ \ \ \ \ \ \ 
\end{eqnarray*}  
as desired.
\emph{Step $2$.} Let $u\in W_2 $ be $\capp$-quasi continuous. Then, for every fixed $\vare >0$,  there exists an open set $A_\vare $ such that $\capp(A_\vare)\leq \vare$ and $u_{\rvert_{Q\backslash A_\vare}}$ is continuous. Hence, the set $\{|u_{\rvert_{Q\backslash A_\vare}}|>k\}\cap (Q\backslash A_\vare )$ is open in $Q\backslash A_\vare $, that is there exists an open set $U\in \mathbb{R}^{N+1}$ such that 
$$
\{|u_{\rvert_{Q\backslash A_\vare}}|>k\}\cap (Q\backslash A_\vare )= U\cap\left(Q\backslash A_\vare\right).
$$ 
Therefore, the set
$$
\{|u|>k\}\cup A_\vare=\{|u_{\rvert_{Q\backslash A_\vare}}|>k\}\cap (Q\backslash A_\vare )\cup A_\vare=(U\cup A_\vare)\cap Q
$$
turns out to be open; let $z$ be the function given in Lemma \ref{dpp},  let $w\in W$ be such that $w\geq \chi_{A_\vare } $ and
$$
\|w\|_{W}\leq \capp(A_\vare ) +\vare\leq 2\vare;
$$
we have that $\dys w+\frac{z}{k}\geq 1$ almost everywhere on $\{|u|>k\}\cup A_\vare $, so that 
\begin{eqnarray*}
\dys\capp (\{|u|>k\}\cup A_\vare)\leq \|w\|_{W}+\frac{\|z\|_{W}}{k}\\\\
\dys\leq \frac{\|z\|_{W}}{k} +2\vare.
\end{eqnarray*}
Finally, using the monotonicity of the  capacity and thanks the arbitrary choice of $\vare$ we can conclude as in Step $1$.    
\end{proof}\medskip

An interesting consequence of these results, whose proof can be performed arguing as in \cite{dmop} and \cite{hkm} for the elliptic case, is the following

\begin{corollary}\label{quella} Let $u\in W_2 $   and $\mu_0\in M_0 (Q)$. Then (the $\capp$ quasi continuous representative of) $u$ is measurable with
respect to $\mu_0 $. If $u$ further is in $L^{\infty}(Q)$, then (the $\capp$-quasi continuous representative of) $u$
belongs to $L^{\infty}(Q,\mu_0 )$, hence to $L^{1}(Q,\mu_0 )$.
\end{corollary}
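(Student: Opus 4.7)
The plan is to follow the elliptic scheme of \cite{dmop, hkm} and transplant it to the parabolic setting using the two tools established just before the statement: the existence of a $\capp$-quasi continuous representative (Theorem \ref{cqc}) and the capacitary estimate (Lemma \ref{cast}). Throughout, let $\tilde u$ denote the $\capp$-quasi continuous representative of $u$ given by Theorem \ref{cqc}.

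For the measurability statement, I would argue as follows. By $\capp$-quasi continuity, for each $n\in\mathbb{N}$ there is an open set $A_n\subseteq Q$ with $\capp(A_n)<1/n$ such that $\tilde u\rvert_{Q\setminus A_n}$ is continuous; we may assume the sequence $(A_n)$ is decreasing. Setting $N:=\bigcap_n A_n$ one has $\capp(N)=0$, while on $Q\setminus N=\bigcup_n(Q\setminus A_n)$ the function $\tilde u$ is Borel measurable (continuous on each set of the union). By the very definition of $M_0(Q)$, $\mu_0(N)=0$, so $\tilde u$ is $\mu_0$-measurable.

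For the $L^\infty$ statement, set $M:=\|u\|_{\liq}$ and fix $k>M$. Since $T_k(u)=u$ almost everywhere and $T_k(u)\in W_2$, its quasi-continuous representative $\widetilde{T_k(u)}$ must agree with $\tilde u$ $\capp$-quasi everywhere. To see this, let $w:=\tilde u-\widetilde{T_k(u)}$: it is quasi-continuous, belongs to $W_2$, and vanishes a.e., so every norm appearing in $[w]_\ast$ is zero. Lemma \ref{cast} then yields $\capp(\{|w|>1/n\})=0$ for every $n$, and countable subadditivity of capacity gives $\capp(\{w\neq 0\})=0$. On the other hand, since $|T_k(u)|\le k$ pointwise, a standard quasi-topological argument on the continuity sets (if $|\widetilde{T_k(u)}|>k$ on a set of positive capacity, intersecting with some $Q\setminus A_n$ produces a set of positive Lebesgue measure where $|T_k(u)|>k$, a contradiction) shows that $|\widetilde{T_k(u)}|\le k$ $\capp$-quasi everywhere. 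Combining the two facts and letting $k\downarrow M$ gives $|\tilde u|\le M$ $\capp$-quasi everywhere; since $\mu_0\in M_0(Q)$ ignores sets of null $\capp$-capacity, $|\tilde u|\le M$ $\mu_0$-a.e., i.e.\ $\tilde u\in L^\infty(Q,\mu_0)$. The conclusion $\tilde u\in L^1(Q,\mu_0)$ is then immediate from $|\mu_0|(Q)<\infty$.

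The main obstacle is the quasi-uniqueness step: two $\capp$-quasi continuous functions agreeing almost everywhere must agree $\capp$-quasi everywhere. This is exactly what makes the capacitary estimate of Lemma \ref{cast} indispensable here, because the naive measure-theoretic argument ($w=0$ a.e.) does not rule out exceptional sets of positive capacity. Once this uniqueness is in hand, the measurability and boundedness assertions reduce to the defining property of $M_0(Q)$.
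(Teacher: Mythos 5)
Your measurability argument is correct and matches the standard route: exhaust $Q$ by an increasing family of sets $Q\setminus A_n$ on which $\tilde u$ is continuous, observe that the residual set $N=\bigcap_n A_n$ satisfies $\capp(N)=0$ and hence $\mu_0(N)=0$ because $\mu_0\in M_0(Q)$, and conclude that $\tilde u$ is $\mu_0$-measurable. You have also put your finger on the right technical engine: Lemma \ref{cast} is what upgrades a.e.\ equality to $\capp$-quasi everywhere equality of quasi-continuous representatives.

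The $L^\infty$ part, however, has a genuine gap in the ``quasi-topological'' step. You argue that if $\{|\widetilde{T_k(u)}|>k\}$ has positive capacity, then intersecting with some continuity set $Q\setminus A_n$ yields a set of positive Lebesgue measure on which $|T_k(u)|>k$. This does not follow. The set $\{|\widetilde{T_k(u)}|>k\}\cap(Q\setminus A_n)$ is relatively open in $Q\setminus A_n$, i.e.\ of the form $U\setminus A_n$ with $U$ open, but such a set can have zero Lebesgue measure while retaining positive capacity. The implication ``$|V|=0\Rightarrow\capp(V)=0$'' is indeed available for the parabolic capacity of Definition \ref{cappar}, but only when $V$ itself is open (then $\chi_V=0$ a.e.\ makes $0$ an admissible competitor in the infimum); it does not transfer to relative differences $U\setminus A_n$, and nothing in the choice of $A_n$ forces the contradiction you invoke. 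So this step does not close as written.

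The fix is simpler than your route and dispenses with the auxiliary truncation $T_k$ entirely: set $M:=\|u\|_{\liq}$ and apply the very mechanism you already used for quasi-uniqueness to $w:=(|\tilde u|-M)^+$. This $w$ is $\capp$-quasi continuous, since $\tilde u$ is and $s\mapsto(|s|-M)^+$ is continuous; and $w=0$ a.e., because $\tilde u=u$ a.e.\ and $|u|\le M$ a.e. Hence $w$ represents the zero element of $W_2$, so $[w]_\ast=0$, and Lemma \ref{cast} gives $\capp(\{w>\vare\})=0$ for every $\vare>0$, whence $\capp(\{|\tilde u|>M\})=0$ by countable subadditivity. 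Since $\mu_0$ vanishes on sets of zero $\capp$-capacity, $|\tilde u|\le M$ holds $\mu_0$-a.e., giving $\tilde u\in L^\infty(Q,\mu_0)$ and then $\tilde u\in L^1(Q,\mu_0)$ from $|\mu_0|(Q)<\infty$. Alternatively, one may take the mollified approximants $u^m$ in the proof of Theorem \ref{cqc} with $\|u^m\|_{\liq}\le M$; the $\capp$-quasi everywhere pointwise limit $\tilde u$ then inherits the bound $|\tilde u|\le M$ $\capp$-quasi everywhere directly.
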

 
\begin{remark}
Obviously we  also have  that
  \begin{equation}\label{cast2}
\capp(\{|u|>k\})\leq \dys\frac{C}{k}\max{\left([u]^{\frac{1}{p}}_{\ast\ast} , [u]^{\frac{1}{p'}}_{\ast\ast}\right)};
\end{equation}
therefore, thanks to Young's inequality and to the fact that $W_2 $ is continuously embedded in $C([0,T]; L^1 (\Omega))$ (see \cite{po})  we deduce that
\begin{equation}\label{cast3}
\capp(\{|u|>k\})\leq \dys\frac{C}{k}\max{\left(\|u\|^{p}_{ W_2 } , \|u\|^{p'}_{ W_2}\right)}.
\end{equation}
\end{remark}

\begin{proof}[Proof of Theorem \ref{cqc}]
Let us first observe that there are no difficulties in approximating, for instance via convolution,  a function $u\in W_2 $ with smooth functions $u^m \in \regot$  in the norm
 $$\dys \|u^{m}\|_{\psob} +\|u^{m}_{t} \|_{\pw-1p' + \luq} $$ with $\|u^m \|_{\liq}\leq C$ (see \cite{dro} and \cite{dpp}); so, let $u^m $ be a sequence like this for which it is not restrictive to assume that 
  $$
 \sum_{m=1}^{\infty} 2^m \max\left([u^{m+1} - u^m]^{\frac{1}{p}}_{\ast\ast},[u^{m+1} - u^m]^{\frac{1}{p'}}_{\ast\ast}\right)
 $$
 is finite. Now, for every $m$ and $r$, let us define
 $$
 \dys\omega^m =\{|u^{m+1} - u^m |>\frac{1}{2^m}\} \ \ \ \ \text{and} \ \ \ \Omega^{r} =\bigcup_{m\geq r} \omega^m. 
 $$
Applying Lemma \ref{cast} and recalling \rife{cast2} we obtain
$$
\capp (\omega^m )\leq C 2^m \max\left([u^{m+1} - u^m]^{\frac{1}{p}}_{\ast\ast},[u^{m+1} - u^m]^{\frac{1}{p'}}_{\ast\ast}\right),
$$
and so, by subadditivity
$$
\capp (\Omega^r )\leq C\sum_{m\geq r  }2^m \max\left([u^{m+1} - u^m]^{\frac{1}{p}}_{\ast\ast},[u^{m+1} - u^m]^{\frac{1}{p'}}_{\ast\ast}\right), 
$$
that implies 
\begin{equation}\label{azero}
\lim_{r\to\infty}\capp (\Omega^r )=0.
\end{equation}
Moreover, for every $y\notin\Omega^r $ we have that 
$$
|u^{m+1} - u^m|(y)\leq \frac{1}{2^m }
$$
 for any $m\geq r$, and so $u^m$ converges uniformly outside of $\Omega^r $ and pointwise outside of $\dys\bigcap_{r=1}^{\infty}\Omega^r $.  But, for any $l\in \na$, we have
 $$ 
 \capp \left(\dys\bigcap_{r=1}^{\infty}\Omega^r \right)\leq \capp(\Omega^l ),
 $$
 and so, by \rife{azero}, we conclude that  $\dys\capp \left(\dys\bigcap_{r=1}^{\infty}\Omega^r \right)=0$; therefore the limit of $u^m $ is $\capp$-quasi continuous and is defined $\capp$-quasi everywhere.
 
 Let us call $\ut$ this $\capp$-quasi continuous representative of $u$, and let $z$ be another $\capp$-quasi continuous representative of $u$; thanks to Lemma \ref{cast} (in particular using its consequence \rife{cast3}), for any $\vare>0$,   we have
 $$
\capp(\{|\ut-z|>\vare\})\leq \dys\frac{C}{\vare}\max{\left(\|\ut-z \|^{p}_{ W_2 } , \|\ut-z \|^{p'}_{ W_2}\right)}=0
$$
 since $\ut=z$ in $W_2 $. This concludes the proof.
\end{proof}\medskip

Now we want to prove that, if $v=u-g$ is a renormalized  solution then it is finite $\capp$-quasi everywhere and it admits a $\capp$-quasi continuous representative.

\begin{theorem}\label{rcqc}
Let $v=u-g $ be a renormalized solution of problem \rife{base}. Then $v$ admits a $\capp$-quasi continuous representative which is finite $\capp$-quasi everywhere.  
\end{theorem}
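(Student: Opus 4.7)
My plan is to build $\tilde{v}$ by patching together $\capp$-quasi continuous representatives of the truncations $\overline{H_n}(v)$. First, for each fixed $n$, I would show $\overline{H_n}(v)\in W_2$. Since $\overline{H_n}\in W^{2,\infty}(\re)$ with $S'=H_n$ compactly supported in $[-2n,2n]$ and $S(0)=0$, plugging $S=\overline{H_n}$ into the renormalized equation \rife{eqd} expresses $\overline{H_n}(v)_t$ as the sum of a $\pw-1p'$ part (coming from $\div(a(t,x,\nabla u)H_n(v)-GH_n(v))$) and an $\luq$ part (collecting $H_n(v)f$ together with the two terms involving $H_n'(v)$, each supported in $\{n\le|v|\le 2n\}$ with $|H_n'|=1/n$). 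Combined with $\|\overline{H_n}(v)\|_{\liq}\le 3n/2$ and $\|\overline{H_n}(v)\|_{\psob}^p\le C(n+1)$ from Proposition \ref{natur}, this places $\overline{H_n}(v)$ in $W_2$, so Theorem \ref{cqc} produces a unique $\capp$-quasi continuous representative $\tilde{v}_n$ defined $\capp$-q.e.

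Next, I would set $A_n=\{|\tilde{v}_n|<n\}$ and observe that, since $\overline{H_n}(s)=s$ precisely on $\{|s|\le n\}$, one has $\tilde{v}_n=v$ a.e.\ on $A_n$. Hence for $n<m$ both $\tilde{v}_n$ and $\tilde{v}_m$ are $\capp$-quasi continuous and coincide a.e.\ on $A_n$, so they agree $\capp$-q.e.\ there; in particular $A_n\subseteq A_m$ up to a set of zero $p$-capacity. I can then define $\tilde{v}:=\tilde{v}_n$ on $A_n$, consistently $\capp$-q.e.\ on $A:=\bigcup_n A_n$; a standard $\varepsilon/2^n$ covering of the exceptional sets of each $\tilde{v}_n$ shows that $\tilde{v}$ is itself $\capp$-quasi continuous on $A$, and it is automatically finite there because $|\tilde{v}|<n$ on $A_n$.

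The core task, and the main obstacle, is to prove $\capp(Q\setminus A)=0$. Writing $Q\setminus A=\bigcap_n\{|\tilde{v}_n|\ge n\}$ as a decreasing intersection, it is enough to show $\capp(\{|\tilde{v}_n|\ge n\})\to 0$. Lemma \ref{cast} applied with $u=\overline{H_n}(v)$ and $k=n$ gives
$$\capp(\{|\tilde{v}_n|\ge n\})\le \frac{C}{n}\max\!\left([\overline{H_n}(v)]_{\ast}^{1/p},\,[\overline{H_n}(v)]_{\ast}^{1/p'}\right),$$
so everything reduces to a \emph{linear} bound $[\overline{H_n}(v)]_{\ast}\le Cn$. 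Three of the four summands in $[\cdot]_{\ast}$ are readily controlled by $Cn$ via Proposition \ref{natur}, the growth condition \rife{a2}, and H\"older-type inequalities on $\{|v|\le 2n\}$. The delicate one is $\|\overline{H_n}(v)\|_{\lild}^2$: the crude $L^{\infty}$-bound only yields $Cn^2$, which is insufficient. To reach the sharp $Cn$, I crucially exploit the property $v\in\pli$ from Definition \ref{1}: splitting $\int|\overline{H_n}(v(t,\cdot))|^2\,dx$ over $\{|v(t,\cdot)|\le n\}$ (where the integrand is $\le n|v(t,\cdot)|$) and $\{|v(t,\cdot)|>n\}$ (whose measure is $\le\|v(t,\cdot)\|_{\luo}/n$ by Chebyshev, times $(3n/2)^2$) bounds both contributions by $Cn$ uniformly in $t$. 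With $[\overline{H_n}(v)]_{\ast}\le Cn$ in hand, the capacitary estimate becomes $Cn^{1/\min(p,p')-1}\to 0$, since $p>1$ forces $\min(p,p')>1$, and the proof is complete.
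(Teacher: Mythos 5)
Your argument is correct and follows essentially the same route as the paper: use Proposition \ref{natur} to bound $\|\ohnv\|_{\psob}^p$, read off from the renormalized equation a decomposition of $\ohnv_t$ with $\|(\ohnv)_t^1\|_{\pw-1p'}^{p'}\le Cn$ and $\|(\ohnv)_t^2\|_{\luq}\le C$, apply Theorem \ref{cqc} to each $\ohnv$, and finish with Lemma \ref{cast} together with the linear bound $[\ohnv]_\ast\le Cn$. The only cosmetic differences are that the paper leaves the patching of representatives implicit (simply noting $\{|v|>n\}=\{|\ohnv|>n\}$ and checking $\capp(\{|v|>n\})\to 0$), and bounds the $\lild$ term by the one-line interpolation $\|\ohnv\|_{\lild}^2\le\|\ohnv\|_{\liq}\|\ohnv\|_{\pli}\le Cn$ rather than by your Chebyshev splitting; both yield the same conclusion.
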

\begin{proof}
Let us denote by  $\overline{H}_{n}(s)$ the primitive function of $H_{n}(s)$. Observe that from \rife{natural} we readily have a similar estimate for $\ohnv$, that is 
\begin{equation}\label{esthn}
\intq|\nabla \ohnv|^p\ dxdt\leq C(n+1)
\end{equation}
and so, choosing $\ohnv$  and $\varphi$ in the renormalized formulation \rife{eq1} for $v$,  
we have 
 \begin{equation}\label{mettohn}
\begin{array}{l}
\dys-\intq \varphi_t \  \ohnv\ dxdt \\\\
\dys\qquad+ \intq \hnv a(t,x,\nabla u )\cdot\nabla \varphi \ dxdt\\\\
\dys\quad=\intq  \hnv \varphi \ d\hat{\mu}_0\\\\
\qquad+\dys \frac{1}{n}\int_{\{n< v\leq 2n\}}  a(t,x,\nabla u )\cdot\nabla v \ \varphi\ dxdt\\\\
\dys- \frac{1}{n}\int_{\{-2n< v\leq -n\}}  a(t,x,\nabla u )\cdot\nabla v \ \varphi\ dxdt.
\end{array}
\end{equation}
Hence we deduce that, in the sense of distributions
$$
\dys\frac{d}{dt}\left(\ohnv\right)\in \pw-1p' +L^{1}(Q);
$$
therefore, since $\ohnv \in \psob\cap\liq$, thanks to Theorem \ref{cqc}, $\ohnv$ has a $\capp$-quasi continuous representative  which is finite $\capp$-quasi everywhere. Therefore, to conclude the proof is enough to prove that $v$ is finite $\capp$-quasi everywhere.

Actually, from \rife{mettohn} we have
\begin{equation*}
\begin{array}{l}
\dys\frac{d}{dt}\left(\ohnv\right)= \div(\hnv\a{u})
\dys+ \frac{1}{n}  a(t,x,\nabla u )\cdot\nabla v\chi_{\{n< v\leq 2n\}}\\\\
\qquad\ \ \ \dys -\frac{1}{n}  a(t,x,\nabla u )\cdot\nabla v\chi_{\{-2n< v\leq -n\}}\ +\hnv\mh,
\end{array}
\end{equation*}
and so, from  \rife{esthn}, we easily deduce that there exists a decomposition of $\dys(\ohnv)_t $ such that 
$$
\|(\ohnv)_{t}^{1}\|_{\pw-1p'}^{p'}\leq C n,
$$
and
$$
\|(\ohnv)_{t}^{2}\|_{\luq}\leq C. 
$$

Now, thanks to Theorem \ref{cqc},  we have that $\ohnv$ admits a $\capp$-quasi continuous representative. Since $\{|v|> n\}\equiv \{|\ohnv|>n\}$, we can apply Lemma \ref{cast} to obtain

\begin{equation}\label{manna}
\begin{array}{l}
\dys \capp(\{|v|> n\}) =\capp(\{|\ohnv|>n\})    \leq  \frac{C}{n}\max\left([\ohnv]_{\ast}^{\frac{1}{p}},[\ohnv]_{\ast}^{\frac{1}{p'}}\right);
\end{array}
\end{equation}
hence, using the obvious estimate
$$
\|\ohnv\|_{\lild}^{2}\leq\|\ohnv\|_{\liq} \|\ohnv\|_{\pli}, 
$$
the fact that $v\in\pli$, and \rife{esthn}, we can conclude from \rife{manna} that
$$
\dys \capp(\{|v|> n\})\leq \frac{C}{n}\max(n^{\frac{1}{p}},n^{\frac{1}{p'}}),
$$
so that $v$ is finite $\capp$-quasi everywhere.
\end{proof}\medskip

\setcounter{equation}{0}
\section{Approximating measures; basic estimates and compactness}\label{5}

First of all, we want to emphasize how the renormalized solution does not depend on the decomposition of the regular part of the measure $\mu_{0}$; the proof of this fact essentially relies on the following result proved in \cite{dpp}:
\begin{lemma}\label{ind}
Let $\mu_{0}\in M_{0}(Q)$, and let $(f,- \div (G_1),g_2 )$ and  $ (\tilde{f}, - {\rm div}{(\tilde{G}_1}),\tilde{g}_2)$  be two different decompositions of $\mu$ according to Theorem \ref{cap2}. Then we have $\dys(g_2 - \tilde{g}_2)_t =\tilde{f} -f - \div (\tilde{G}_1) + \div (G_1)$ in distributional sense, $\dys g_2 - \tilde{g}_2\in C([0,T]; L^{1}(\Omega))$ and $\dys(g_2 - \tilde{g}_2)(0)=0$.
\end{lemma}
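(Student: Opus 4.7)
The plan is to set $w := g_{2} - \tilde{g}_{2}$ (so that $w \in L^{p}(0,T; V)$ with $V = \sob \cap L^{2}(\Omega)$) and to prove the three assertions in order. For the \emph{distributional identity}, I would apply the representation \rife{cap3} from Theorem \ref{cap2} to both decompositions $(f, -\div G_{1}, g_{2})$ and $(\tilde{f}, -\div \tilde{G}_{1}, \tilde{g}_{2})$ of $\mu_{0}$ using the same test function $\varphi \in C_{c}^{\infty}(Q)$, and subtract: the common left-hand side $\intq \varphi \, d\mu_{0}$ cancels, and what remains is exactly $w_{t} = \tilde{f} - f - \div(\tilde{G}_{1}) + \div(G_{1})$ in $\mathcal{D}'(Q)$. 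In particular, $w_{t} \in \pw-1p' + \luq$.

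The \emph{continuity into $L^{1}$} is where I expect the main obstacle: the $\luq$ component of $w_{t}$ prevents a direct appeal to the classical Lions embedding $W_{1} \subset C([0,T]; L^{2}(\Omega))$, which would settle the case $w_{t} \in \pw-1p'$ alone. The strategy (following \cite{po}) is based on truncation. After a Steklov regularization in time, I would test the equation for $w_{t}$ against the truncate $T_{k}(w)$ on slabs $(s, \tau) \times \Omega$: the dual-space portion of $w_{t}$ produces the standard energy identity $\tfrac{1}{2}\tfrac{d}{dt}\into T_{k}(w)^{2}\,dx + (\ldots)$, while the $\luq$ portion is controlled by the pointwise bound $|T_{k}(w)| \leq k$. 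This yields $T_{k}(w) \in C([0,T]; L^{2}(\Omega))$ for every $k$, together with a uniform-in-$k$ estimate on the increments $T_{k}(w) - T_{k'}(w)$ in $L^{\infty}(0,T; L^{1}(\Omega))$ coming from the $L^{1}$-contraction structure of the truncated equation. Passing to the limit $k \to \infty$ via a Cauchy-in-$C([0,T]; L^{1}(\Omega))$ argument then upgrades this to $w \in C([0,T]; L^{1}(\Omega))$.

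Finally, for the \emph{vanishing initial trace}, the point values $w(0), w(T) \in L^{1}(\Omega)$ are now well defined. I would rerun the subtraction of the first step, but now with $\varphi \in C_{c}^{\infty}([0,T] \times \Omega)$, which need not vanish at the temporal boundaries (this is exactly the test class in Theorem \ref{cap2}). Integrating by parts in time the term $-\intt \langle \varphi_{t}, w \rangle \, dt$ (justified by $w \in C([0,T]; L^{1}(\Omega))$) and substituting the identity for $w_{t}$ from the first step, all bulk contributions cancel, leaving just
$$
\into w(0) \, \varphi(0,x) \, dx - \into w(T) \, \varphi(T,x) \, dx = 0.
$$
Choosing $\varphi(t, x) = \theta(t) \psi(x)$ with $\psi \in C_{c}^{\infty}(\Omega)$ arbitrary and $\theta \in C^{\infty}([0,T])$ satisfying $\theta(0) = 1$ and $\theta(T) = 0$ then forces $\into w(0)\, \psi\, dx = 0$ for every $\psi$, hence $w(0) = 0$ in $L^{1}(\Omega)$.
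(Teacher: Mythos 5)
The paper does not actually prove Lemma~\ref{ind}: it simply cites Lemma~2.29 of \cite{dpp}. So your attempt is the only genuine proof on the table, and it should be judged on its own.

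Steps one and three are correct and are exactly the right computations. Subtracting the two instances of the representation \rife{cap3} with $\varphi\in C_{c}^{\infty}(Q)$ gives the distributional identity for $w_{t}$; once the $C([0,T];L^{1}(\Omega))$ regularity is in hand, rerunning the subtraction with $\varphi\in C_{c}^{\infty}([0,T]\times\Omega)$ (the test class actually permitted in Theorem~\ref{cap2}), integrating by parts in time, and substituting back the expression for $w_{t}$ leaves precisely the boundary pairings $\into w(0)\varphi(0)\,dx$ and $\into w(T)\varphi(T)\,dx$, and the tensor-product choice of $\varphi$ kills the $t=T$ term and forces $w(0)=0$.

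Step two is the delicate part, and this is where your sketch has a real gap. The equation you obtain for $w$ is $w_{t}=\tilde f-f-\div(\tilde G_{1})+\div(G_{1})$: it has \emph{no} elliptic (monotone, dissipative) term at all. So the phrase ``the $L^{1}$-contraction structure of the truncated equation'' does not describe anything present here — contraction in $L^{1}$ for truncated differences is a feature of equations like $w_t-\div\,a(t,x,\nabla w)=\ldots$, and it is exactly the missing $-\div(a(\cdot))$ that produces the sign you would need. Testing against $T_{k}(w)$ does give absolute continuity of $t\mapsto\into\overline{T_{k}}(w(t))\,dx$, hence $T_{k}(w)\in C([0,T];L^{2}(\Omega))$ after a chain-rule lemma, but the Cauchy estimate in $C([0,T];L^{1}(\Omega))$ for $T_{k}(w)-T_{k'}(w)$ does not follow from the structure you invoke; the terms $\int_{\{|w|>k\}}|\tilde f-f|\,|T_{k'}(w)-T_{k}(w)|$ and $\int_{\{k<|w|<k'\}}|G_{1}-\tilde G_{1}||\nabla w|$ carry a factor comparable to $k'-k$ and do not vanish uniformly in $k'$. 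The honest route here is to observe that $w\in\psob$ with $w_{t}\in\pw-1p'+\luq$ and then invoke the time-regularity result of Porretta \cite{po} (the one the paper itself appeals to when it asserts that $S(v)\in C([0,T];L^{1}(\Omega))$); but you should check that this applies without a uniform $L^{\infty}(Q)$ bound on $w$, since $g_{2}-\tilde g_{2}$ need not be bounded, whereas the version recorded in this paper (for $W_{2}$) does carry that hypothesis. As written, step two is the one place where an idea is missing rather than merely under-detailed.
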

\begin{proof}
See Lemma $2.29$ in \cite{dpp}, pag. $22$.
\end{proof}\medskip

If $\mu\in M_0 (Q)$, the definition of renormalized solution does not depend on the decomposition of the absolutely continuous part of $\mu$ as shown in  Proposition $3.10$ in \cite{dpp}. The next result tries to  stress the fact that even for general measure data this fact should be true;  we will actually prove that the definition of renormalized solution is stable under \emph{bounded perturbations} of the decomposition of $\mu_0$ (see also Remark \ref{ultimo}).  Here and in the rest of the paper $\omega(\nu,\eta,\vare,n,h,k)$ will  indicate any quantity that vanishes as the parameters go to
their (obvious, if not explicitly stressed) limit point with the same order in which they appear,  that is, for example
\[
\dys\lim_{\nu\rightarrow 0}\limsup_{\delta\rightarrow 0}\limsup_{n\rightarrow +\infty}
\limsup_{\vare\rightarrow 0} |\omega(\vare,n,\delta,\nu)|=0.
\]

\begin{proposition}\label{indi}
Let $u$ be a renormalized solution  of \rife{base}. Then $u$ satisfies Definition \ref{1} for every decomposition $(\tilde{f}, -\div (\tilde{G}),\tilde{g})$ such that $g-\tilde{g}\in \psob\cap\liq$.
\end{proposition}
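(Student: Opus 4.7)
\medskip
\noindent\emph{Proof sketch.} Set $\bar g=g-\tilde g$, which by hypothesis lies in $\psob\cap\liq$. Lemma \ref{ind} then gives $\bar g_t=\hat{\tilde\mu}_0-\hat\mu_0$ in $\mathcal D'(Q)$ together with $\bar g(0)=0$ in $\luo$. Writing $\tilde v=u-\tilde g=v+\bar g$, my first task is to check the regularity required by Definition \ref{1}: since $v\in\plq\cap\pli$ and $\bar g\in\psob\cap\liq$, also $\tilde v\in\plq\cap\pli$; setting $M=\|\bar g\|_{\liq}$, the inclusion $\{|\tilde v|<k\}\subseteq\{|v|<k+M\}$ yields the pointwise bound $|\nabla T_k(\tilde v)|\leq|\nabla T_{k+M}(v)|+|\nabla\bar g|$, whence $T_k(\tilde v)\in\psob$ for every $k>0$.

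Next I would transfer \rife{rec} (and symmetrically \rife{recm}) from $v$ to $\tilde v$. Splitting
$$\frac{1}{n}\int_{\{n\leq\tilde v<2n\}}a(t,x,\nabla u)\cdot\nabla\tilde v\,\psi\,dxdt=\frac{1}{n}\int_{\{n\leq\tilde v<2n\}}a\cdot\nabla v\,\psi\,dxdt+\frac{1}{n}\int_{\{n\leq\tilde v<2n\}}a\cdot\nabla\bar g\,\psi\,dxdt,$$
the inclusion $\{n\leq\tilde v<2n\}\subseteq\{n-M\leq|v|<2n+M\}$ combined with \rife{natur0} (adapted to this slightly enlarged strip) controls $\|a(\cdot,\nabla u)\|_{L^{p'}}$ on this set by $Cn^{1/p'}$; H\"older and the absolute continuity of the integral (since $|\{|\tilde v|\geq n\}|\to 0$) then drive the second integral to zero. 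For the first integral, the symmetric difference with $\{n\leq v<2n\}$ is contained in $\{n-M\leq|v|<n+M\}\cup\{2n-M\leq|v|<2n+M\}$, and the weighted contributions on these strips vanish by the same argument, so \rife{rec} for $v$ implies \rife{rec} for $\tilde v$.

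The main step is the renormalized identity. Subtracting \rife{eq1} for $v$ from its target analogue for $\tilde v$ and using $\hat{\tilde\mu}_0=\hat\mu_0+\bar g_t$, the task reduces to proving
\begin{equation*}
\begin{array}{l}
\dys-\intt\langle\varphi_t,S(\tilde v)-S(v)\rangle\,dt+\intq(S'(\tilde v)-S'(v))\,a(t,x,\nabla u)\cdot\nabla\varphi\,dxdt\\
\dys+\intq\bigl[S''(\tilde v)\,a(t,x,\nabla u)\cdot\nabla\tilde v-S''(v)\,a(t,x,\nabla u)\cdot\nabla v\bigr]\varphi\,dxdt\\
\dys=\intq(S'(\tilde v)-S'(v))\varphi\,d\hat\mu_0+\langle\bar g_t,S'(\tilde v)\varphi\rangle.
\end{array}
\end{equation*}
My plan is to regularize $\bar g$ in time by convolution, producing smooth $\bar g_\eta$ with $\bar g_\eta\to\bar g$ in $\psob\cap\liq$ and $(\bar g_\eta)_t\to\bar g_t$ in $\pw-1p'+\luq$, and to test the distributional equation \rife{eqd} for $v$ against $S'(v+\bar g_\eta)\varphi$. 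This is an admissible test function because $S'$ has compact support, so the variation of $S'(v+\bar g_\eta)$ is concentrated on a set of the form $\{|v|\leq K\}$ (with $K$ depending on $M$ and $\supp S'$), where $T_K(v)\in\psob$. For smooth-in-time $\bar g_\eta$, the chain rule $(S(v+\bar g_\eta))_t=S'(v+\bar g_\eta)\bigl(v_t+(\bar g_\eta)_t\bigr)$ is valid distributionally; inserting \rife{eqd} into the $v_t$-piece reproduces the left-hand side of the displayed identity, and the $(\bar g_\eta)_t$-piece becomes $\langle(\bar g_\eta)_t,S'(v+\bar g_\eta)\varphi\rangle$. Passing $\eta\to 0$, with Proposition \ref{quell} for the weak$\times$a.e.\ products and the strong convergence $\nabla T_k(v+\bar g_\eta)\to\nabla T_k(\tilde v)$ on sublevel sets, yields the required identity.

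I expect the last passage to the limit to be the \textbf{main obstacle}: the nonlinearity of $S$ combined with the coupling through $\nabla v$ in the $S''$-integral makes the limit delicate, one must verify admissibility of the test function $S'(v+\bar g_\eta)\varphi$ for the distributional equation \rife{eqd} uniformly in $\eta$, and the boundary/initial terms must be tracked so that $\bar g_\eta(0)\to\bar g(0)=0$ is used at the right place to avoid spurious contributions at $t=0$.
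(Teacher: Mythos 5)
Your reduction of the regularity requirements for $\tilde v=v+\bar g$ is clean and correct: the inclusion $\{|\tilde v|<k\}\subseteq\{|v|<k+M\}$ plus $\bar g\in\psob\cap\liq$ gives $T_k(\tilde v)\in\psob$ directly, which is simpler than the paper's appeal to Proposition~3.10 of \cite{dpp}. Your sketch for the renormalized identity (time-regularizing $\bar g$ and testing \rife{eqd} with $S'(v+\bar g_\eta)\varphi$) is a plausible alternative to the paper's citation of \cite{dpp}, and you correctly identify the limit passage as the delicate part.

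The genuine gap is in the transfer of the reconstruction properties \rife{rec}--\rife{recm}. Your strategy is a direct ``change of domain'' comparison: split $a\cdot\nabla\tilde v=a\cdot\nabla v+a\cdot\nabla\bar g$, observe that the symmetric difference between $\{n\leq\tilde v<2n\}$ and $\{n\leq v<2n\}$ lies in the two thin strips $\{n-M\leq|v|<n+M\}\cup\{2n-M\leq|v|<2n+M\}$, and claim the weighted contributions there ``vanish by the same argument''. The H\"older argument you invoke works for the $a\cdot\nabla\bar g$ piece because $\|\nabla\bar g\|_{L^p(\text{strip})}$ is $O(1)$ and the $\tfrac1n\cdot n^{1/p'}=n^{-1/p}$ prefactor does the rest. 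But for the $a\cdot\nabla v$ piece on the thin strips you would bound $\tfrac1n\int|a||\nabla v|$ by $\tfrac1n\cdot Cn^{1/p'}\cdot Cn^{1/p}=O(1)$, not $o(1)$: \rife{natur0} and \rife{natural} only give $\int_{\{n-M\leq|v|<n+M\}}|\nabla u|^p\leq Cn$, not the uniform-in-$n$ thin-strip estimate $\int_{\{h\leq|v|<h+L\}}|\nabla u|^p\leq CL$ that your argument implicitly needs. Such an estimate is neither stated nor proved in the paper and does not follow automatically from Definition~\ref{1}; proving it would require its own test-function computation (with $S'$ supported near $[h,h+L]$, and the $K\to\infty$ cut-off needed for compact support introduces further terms).

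The paper avoids this obstruction entirely by going back to the equation: it tests \rife{eqd} with $\beta_h(\gamma_n)\psi$ where $\gamma_n=\overline{h}_n(v)+(g-\tilde g)$ and $S'=h_n$, keeping both $n$ and $h$ as free parameters. Letting first $n\to\infty$ (where $h_n(v)\to 1$ strongly in $L^p(0,T;W^{1,p})$) and then $h\to\infty$ reproduces \rife{rec} for $\tilde v$ without ever isolating a thin-strip energy bound, and this two-parameter structure is also exactly where the boundedness hypothesis on $g-\tilde g$ is used (to guarantee $\{|\gamma_n|\leq 2h\}\supseteq\{|v|\leq M\}$ for a fixed $M$ and thus compactness of $T_{2h}(\gamma_n)$). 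So your approach is not just a rewriting of the paper's: it is a different route, and as written it does not close because of the missing thin-strip estimate.
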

\begin{proof}[Sketch of the Proof] 
Assume that $u$ satisfies Definition \ref{1} for  $(f, -\div{(G)},g )$ and let $ (\tilde{f}, - \div{(\tilde{G}}),\tilde{g})$ be a different decomposition of $\mu_0 $ such that $g-\tilde{g}$ is bounded. 
Thanks to Lemma \ref{ind} we readily have that $\tilde{v}=u-\tilde{g} \in\pli$; to prove that $T_k (u-\tilde{g})\in \psob$ for every $k>0$ we can reason as in the proof of Proposition $3.10$ in \cite{dpp}, choosing $S(v)=\ohnv$ and  using the fact that thanks to \rife{rec} and \rife{recm} we have
$$
\dys\lim_{n\to\infty}\frac{1}{n}\int_{\{n\leq |u-g |<2n\}}|\nabla u |^{p}\ dx \leq C.
$$  

To prove that the reconstruction properties \rife{rec} and \rife{recm} are satisfied for $\tilde{v}$ we have to be more careful. 

To prove \rife{rec}  we choose $\beta_{h}( \overline{h}_n (v) +g -\tilde{g})\psi$ (where $\beta_h (s) = B_h (s^+)$ and $\psi\in C^1 (\overline{Q})$) and $S' (s)=h_n (s)=H_n (s^+)$ in \rife{eqd}. Writing $\gamma_n= \overline{h}_n (v) +g -\tilde{g}$ and using Lemma \ref{ind}, to obtain
\begin{ceqnarray}
&&
\dys \int_{0}^{T}\langle(\gamma_n)_t , \beta_h (\gamma_n )\psi \rangle\ dt\clabel{ia}{A}\\
&&
\quad+\dys  \intq h_n(v)\a{u}\cdot\nabla\beta_h (\gamma_n)\ \psi\ dxdt\clabel{ib}{B}\\
&&
\qquad\dys -\frac{1}{n}\int_{\{n\leq v< 2n\}}\a{u}\cdot\nabla v \ \ \beta_h (\gamma_n)\ \psi\ dxdt\clabel{ic}{C}\\
&&
\quad \dys+ \intq h_n(v)\a{u}\cdot\nabla\psi \ \beta_h (\gamma_n)\  dxdt\clabel{id}{D}\\
&&
=\dys\intq [(h_n (v) -1)f +\tilde{f}]\beta_h (\gamma_n )\psi\ dxdt\clabel{ie}{E}\\
&&
\quad+\dys\intq [(h_n (v) -1)G +\tilde{G}]\cdot\nabla(\beta_h (\gamma_n )\psi)\ dxdt\clabel{if}{F}\\
&&
\qquad\dys +\intq G\cdot\nabla h_n(v) \ \ \beta_h (\gamma_n)\ \psi\ dxdt\, .\clabel{ig}{G}
\end{ceqnarray}
Now, integrating by parts in \rifer{ia} we have
$$
\rifer{ia}=\dys -\intq \beta_h (\gamma_n )  \psi_t \ dxdt +\into   \beta_h (\gamma_n)(T)\psi(T)\ dx - \into   \beta_h (\overline{h}_n (u_0))\psi(0)\ dx=\omega (n,h)\, ; 
$$
while, thanks to the properties of $\beta_h$ and to the fact $h_n(v)$ strongly converges to $1$ in $L^p ((0,T); W^{1,p}(\Omega))$ (this fact essentially relies on the estimate \rife{natural}) we have that
$$
|\rifer{ie}|+|\rifer{if}|=\omega(n,h)\, . $$
 On the other hand, using H\"{o}lder's inequality and the fact that $0\leq \beta_h (\gamma_n )\leq 1$, we have 
$$  
|\rifer{ig}|\leq C\left(\intq |G|^{p'}\ dxdt\right)^{\frac{1}{p'}}\left(\intq|\nabla h_n (v)|^{p}\ dxdt\right)^{\frac{1}{p}}=\omega(n)\,,
$$
again using the fact that $h_n(v)$ strongly converges to $1$ in $L^p ((0,T); W^{1,p}(\Omega))$.

Moreover, since $g-\tilde{g}$ is bounded, we can truncate $v$ on the set $\{|\gamma_n |\leq 2h\}$, that is  $u=T_M(v) + g$ on  $\{|v|\leq M\}\subseteq \{|\gamma_n |\leq 2h\}$, for a suitable $M>0$ not dependent on $n$. Hence, since $T_{2h}(\gamma_n)$ is weakly compact in $\psob$ (actually arguing as in \cite{dpp}, that is  taking $T_{2h}(\gamma_n)$ as test function in \rife{eqd}, one can show that it is bounded, uniformly with respect to $n$, in $\psob$), we have
$$
\begin{array}{l}
\dys\rifer{ib}=\frac{1}{h}\int_{\{h\leq \gamma_n <2h\}}h_{n}(v)\a{u}\nabla\gamma_n \ \psi\ dxdt\\\\
\dys=
\frac{1}{h}\int_{\{h\leq \tilde{v}<2h\}}\a{u}\nabla \tilde{v}\ \psi\ dxdt +\omega(n),
\end{array}
$$
and, on the other hand, since $h_n(v)$ strongly converges to $1$ in $L^p ((0,T); W^{1,p}(\Omega))$, we have
$$
\begin{array}{l}
\dys-\rifer{ic}=\frac{1}{n}\int_{\{n\leq v< 2n\}\cap\{|\gamma_n| \geq 2h\} }\a{u}\cdot\nabla v \ \psi\ dxdt\\\\
\dys\quad+ \frac{1}{n}\int_{\{n\leq v< 2n\}\cap\{|\gamma_n| < 2h\} }\a{u}\cdot\nabla v \ \ \beta_h (\gamma_n)\ \psi\ dxdt\\\\
\qquad\dys =\frac{1}{n}\int_{\{n\leq v< 2n\}\cap\{|\gamma_n| \geq 2h\} }\a{u}\cdot\nabla v \ \psi\ dxdt+ \omega(n)\\\\
\dys\qquad = \frac{1}{n}\int_{\{n\leq v< 2n\} }\a{u}\cdot\nabla v \ \psi\ dxdt\\\\
\dys- \frac{1}{n}\int_{\{n\leq v< 2n\}\cap\{ |\gamma_n | <2h \} }\a{u}\cdot\nabla v \ \psi\ dxdt +\omega(n)\\\\
\dys\qquad =\intq \psi\ d\msp +\omega(n);
\end{array}
$$
Collecting together all these facts we derive that
$$
\lim_{h\to\infty}\frac{1}{h}\int_{\{h\leq \tilde{v}<2h\}}\a{u}\nabla \tilde{v}\ \psi\ dxdt=\intq \psi\ d\msp. 
$$
Then we conclude by density for every $\psi\in C(\overline{Q})$; the proof of \rife{recm} can be treated analogously.

 Finally the fact that $\tilde{v}$ satisfies equation \rifer{eqd} can be proved as in the proof of Proposition $3.10$ in \cite{dpp}.
\end{proof}
\begin{remark}\label{ultimo}
Let us stress the fact  that in Proposition \ref{indi} we deal with bounded perturbations of the \emph{time derivative} part of $\mu_0$ because of technical reasons; actually the proof of Proposition $3.10$ of \cite{dpp} is given by suitable estimates and with the use of Fatou's  lemma. Unfortunately, as far as the reconstruction properties \rife{rec} and \rife{recm} are concerned, we have to make use of  a more subtle analysis on each term. The requirement of boundedness on $g-\tilde{g}$ arises from this fact.

However, in the linear case,  we can drop this stronger assumption proving the result in its general form. Indeed as we will see later (see the proof of Theorem \ref{tlinear}), in this case a renormalized solution turns out to be a duality solution and so, using the duality formulation for $u$ and Lemma \ref{ind}, we can easily conclude.          
\end{remark}

 Now, let us come back to the existence of a renormalized solution for problem \rife{1}; as we said before, if $\mu\in M(Q)$ we can split it this way:
\begin{equation}\label{app0}
 \mu=\mu_0 +\mu_s=f-\div (G) + g_t+ \mu_{s}^{+} -\mu_{s}^{-}, \end{equation}
 for some $f\in L^{1}(Q)$, $G\in (L^{p'}(Q))^{N}$, $g\in\pw-1p'$, and $\mu_{s}\perp\capp$, that is, $\mu_s$ is concentrated on a set $E\subset Q$ with $\capp(E)=0$. There are many ways to approximate this measure
looking for existence of solutions for problem \rife{base}; we will make the following choice: let
\begin{equation}\label{app}
\mu^{\vare}=f^\vare -\div( G^{\vare} )+g^{\vare}_{t} +\lep -\lem,
\end{equation}
where $f^{\vare}\in \C$ is a sequence of functions
which converges to $f$ weakly in $L^1 (Q)$, $G^{\vare}\in \C$ is a sequence of functions
which converges to $G $ strongly in $(L^{p'} (Q))^N$, $g^{\vare}\in \C$ is a sequence of functions
which converges to $g$ strongly in $\psob$, and $\lep\in \C$ (respectively  $\lem$) is a sequence of nonnegative functions
that converges to $\mu_{s}^{+}$ (respectively $ \mu_{s}^{-}$) in the narrow topology of measures. Moreover let $\uoe\in C^{\infty}_{0}(\Omega)$ be a sequence converging to $\uo$ strongly in $\luo$. 
Notice that this approximation can be easily obtained via a standard convolution argument. We  also assume
\[
\|\mu^{\vare}\|_{\luq}\leq C \|\mu\|_{M(Q)}, \ \ \ \ \|\uoe\|_{\luo}\leq C\|\uo\|_{\luo}\,.
\]

Let us define $u^\vare$ the solution of problem
\begin{equation}\label{basee}
\begin{cases}
    u_{t}^{\vare}+A(u^\vare)=\mu^\vare & \text{in}\ (0,T)\times\Omega \\
u^\vare=0 & \text{on}\  (0,T)\times\partial\Omega,\\
    u^\vare (0)=\uoe & \text{in}\ \Omega,
\end{cases}
\end{equation}
that exists and is unique (see Remark \ref{regular} above), and let $v^\vare=u^\vare- g^\vare $. Approximation
\rife{app} yields standard compactness results (see  \cite{bdgo}, \cite{do}, and \cite{dpp}) that we collect in the following 
\begin{proposition}\label{pro}
Let $u^\vare$ and $v^\vare$ be defined as before. Then
\begin{equation}\label{pro1}
\|u^\vare\|_{L^\infty (0,T; L^1 (\Omega))}\leq C,
\end{equation}
\begin{equation}\label{pro2}
\intq|\nabla T_k (u^\vare)|^p\ dxdt\leq Ck,
\end{equation}
\begin{equation}\label{pro3}
\|v^\vare\|_{L^\infty (0,T; L^1 (\Omega))}\leq C,
\end{equation}
\begin{equation}\label{pro4}
\intq|\nabla T_k (v^\vare)|^p\ dxdt\leq C(k+1).
\end{equation}
Moreover, there exists a measurable function $u$ such that $T_k (u)$ and   $T_k (v)$ belong to $\psob$, $u$ and $v$ belong to $L^\infty (0,T; L^1 (\Omega))$, and, up to  a subsequence, for any $k>0$, and for every $q<p-\frac{N}{N+1}$, we have
\[
\begin{array}{l}
u^\vare\longrightarrow u \ \ \text{a.e. on}\ \ Q \ \ \text{weakly in}\ L^q (0,T;W^{1,q}_{0}(\Omega))\ \ \text{and strongly in} \ L^1 (Q),\\\\
v^\vare\longrightarrow v \ \ \text{a.e. on}\ \ Q\ \ \text{weakly in}\ L^q (0,T;W^{1,q}_{0}(\Omega))\ \ \text{and strongly in} \ L^1 (Q),\\\\
T_k (u^\vare)\rightharpoonup T_k (u)\ \ \ \text{weakly in}\ \ \psob \ \ \text{and a.e. on} \ Q,\\\\
T_k (v^\vare)\rightharpoonup T_k (v)\ \ \ \text{weakly in}\ \ \psob \ \ \text{and a.e. on} \ Q,\\\\
\nabla u^\vare\longrightarrow \nabla u \ \ \text{a.e. on}\ \ Q,\\\\
\nabla v^\vare\longrightarrow \nabla v \ \ \text{a.e. on}\  \ Q.\\\\
\end{array}
\]
\end{proposition}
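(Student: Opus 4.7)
The plan is to derive the four uniform bounds in order by standard test-function arguments in the approximating problem \rife{basee}, and then extract the convergences through Aubin-Lions compactness combined with the Boccardo-Murat monotonicity trick. The a priori estimates follow the lines of \cite{bdgo} and \cite{dpp}; only the a.e.\ convergence of gradients is really delicate in the measure-data framework.

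For \rife{pro2} I would test \rife{basee} (Steklov-regularized in time, which is legitimate since $u^\vare_t\in\pw-1p'$) with $T_k(u^\vare)$: the time derivative yields $\tfrac{1}{2}\|T_k(u^\vare)(t)\|_{L^2}^2$ minus an initial contribution bounded by $Ck$, coercivity \rife{a1} gives $\alpha\int|\nabla T_k(u^\vare)|^p$ on the left, and the right hand side is at most $k\|\mu^\vare\|_{L^1(Q)}\leq Ck$ since $|T_k|\leq k$ and the total variation of $\mu^\vare$ is uniformly controlled. The $L^\infty(L^1)$ bound \rife{pro1} is obtained by testing with a smooth approximation of $\mathrm{sign}(u^\vare)$ (e.g.\ $\tfrac{1}{\delta}T_\delta(u^\vare)$ as $\delta\to 0$): the divergence term is nonnegative by \rife{a1}, and the right hand side is bounded by $\|\mu^\vare\|_{M(Q)}+\|u_0^\vare\|_{L^1}\leq C$. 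Estimates \rife{pro3} and \rife{pro4} on $v^\vare=u^\vare-g^\vare$ follow from the same test functions applied to the shifted equation
$$v^\vare_t-\div(\a{u^\vare}) = \mhe + \lep - \lem;$$
the only new work is to estimate $\int\a{u^\vare}\cdot\nabla g^\vare$ on $\{|v^\vare|\leq k\}$ via \rife{a2} and Young's inequality, which produces the additive constant responsible for the factor $(k+1)$ in \rife{pro4}.

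From \rife{pro1}--\rife{pro2} the classical Boccardo-Gallou\"et argument yields a uniform bound of $u^\vare$ in $L^q(0,T;W^{1,q}_0(\Omega))$ for every $q<p-\tfrac{N}{N+1}$; the standing assumption $p>\tfrac{2N+1}{N+1}$ keeps $q>1$. The same reasoning applied to $v^\vare$ through \rife{pro3}--\rife{pro4} produces a weak limit $v$. From the equation, $u^\vare_t$ is uniformly bounded in $\pw-1p' + \luq$, so Aubin-Lions (or Simon's compactness theorem) gives strong $L^1(Q)$-convergence and hence a.e.\ convergence, up to a subsequence, of both $u^\vare$ and $v^\vare$. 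The weak convergences of the truncates $T_k(u^\vare)$, $T_k(v^\vare)$ in $\psob$ follow at once from \rife{pro2} and \rife{pro4}.

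The main obstacle is the a.e.\ convergence of $\nabla u^\vare$, because the singular component $\lep-\lem$ concentrates in the limit on a set of zero parabolic $p$-capacity and does not live in any dual Sobolev space. I would use the Boccardo-Murat monotonicity trick in its measure-data form: test the equation with a Landes time-regularization of $T_\sigma\bigl(T_k(u^\vare)-T_k(u)\bigr)H_n(v^\vare)$, where $H_n$ is the cut-off \rife{hn}. The role of $H_n$ is to localize the energy on $\{|v^\vare|\leq 2n\}$; the contribution of the singular part then tends to zero as $n\to\infty$ because the $\capp$-quasi continuous representative of the test function (guaranteed by Theorem \ref{cqc} and Corollary \ref{quella}) vanishes on the concentration set of $\lep-\lem$. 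Taking limits in the order $\vare\to 0$, then $\sigma\to 0$, then $n,k\to\infty$ reduces the matter to
$$\limsup_\vare\int_{\{|u^\vare|\leq k\}}\bigl(\a{u^\vare}-\a{u}\bigr)\cdot(\nabla u^\vare-\nabla u)\,dxdt=0,$$
and strict monotonicity \rife{a3} then forces the integrand to zero in measure on $\{|u|\leq k\}$; a diagonal argument in $k$ yields the claimed a.e.\ convergence of $\nabla u^\vare$ and $\nabla v^\vare$ on all of $Q$. This last point is precisely what Section \ref{6} will be devoted to.
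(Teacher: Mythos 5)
The paper does not actually prove this proposition: it is stated as a collection of ``standard compactness results'' with the proof delegated to the references \cite{bdgo}, \cite{do}, \cite{dpp}. Your outline for \rife{pro1}--\rife{pro4}, the weak $L^q(0,T;W^{1,q}_0(\Omega))$ convergences via Boccardo--Gallou\"et, the $L^1$-strong (hence a.e.) convergence via Aubin--Lions/Simon, and the weak convergence of truncates from \rife{pro2}, \rife{pro4}, all match the standard route those references take, so on these points you are on target.

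The issue is your explanation of the a.e.\ convergence of the gradients, where the stated mechanism is wrong. You claim that after inserting $H_n(v^\vare)$ ``the contribution of the singular part tends to zero as $n\to\infty$ because the $\capp$-quasi continuous representative of the test function \dots vanishes on the concentration set of $\lep-\lem$.'' But $\lep$ and $\lem$ are, by construction \rife{app}, smooth $C^\infty_0(Q)$ functions for each $\vare$: they are not concentrated on any set, let alone one of zero $\capp$-capacity, so Theorem \ref{cqc} and Corollary \ref{quella} have nothing to say about the pairing $\int T_\sigma(\cdot)\,H_n(v^\vare)\,d\lep$. The correct (and simpler) reason this term vanishes is elementary: $|T_\sigma(\cdot)|\leq\sigma$ and $0\leq H_n\leq 1$, so the pairing is bounded by $\sigma\bigl(\|\lep\|_{\luq}+\|\lem\|_{\luq}\bigr)\leq C\sigma$ uniformly in $\vare$ and $n$, and this dies when $\sigma\to 0$. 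In fact the $H_n(v^\vare)$ factor is not needed for the a.e.\ gradient convergence at all; it is the \emph{strong} convergence of truncates in Section \ref{6} that genuinely requires the near-$E$/far-from-$E$ decomposition via the capacitary potentials $\pdp,\pdm$, and you appear to have imported that machinery here where the $T_\sigma$-smallness already closes the argument, as in \cite{bdgo}.
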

\begin{remark} 
Let us observe that from Proposition \ref{pro}, thanks to assumption \rife{a2} on $a$ and Vitali's theorem, we  easily deduce that $\a{\ue }$ is strongly compact in $\luq$.      
\end{remark}

\setcounter{equation}{0}
\section{Strong convergence of truncates}\label{6}

\ \ In this section we shall prove the strong convergence of truncates of renormalized solutions of problem \rife{base}; to do
that we will put together the approach used in \cite{dmop} for the elliptic case with the one in \cite{bp1}.

 With the symbol $\tkvn$ we  indicate the Landes time-regularization of the truncate function $T_{k}(v)$; this notion, introduced in \cite{lan}, was fruitfully used in several papers afterwards (see in particular  \cite{do}, \cite{bdgo}, and \cite{bp1}). It is defined as follows:  let $z_\nu$ be a sequence of functions such that
\[
 \begin{array}{l}
 \dys z_\nu\in\sob\cap\lio\,, \ \ \ \ \|z_\nu\|_{\lio}\leq k\,,\\\\
 \dys z_\nu\longrightarrow T_k (\uo)\ \ \ \text{a.e. in}\ \ \Omega\ \ \text{as $\nu$ tends to infinity},\\\\
 \dys\frac{1}{\nu}\|z_\nu\|^{p}_{\sob}\longrightarrow 0 \ \ \text{as $\nu$ tends to infinity}.
 \end{array}
 \]
Then, for fixed $k>0$, and $\nu>0$, we denote by  $\tkvn$  the unique solution of the problem  
 \begin{equation*}
\begin{cases}
   \dys\frac{d T_k (v)}{dt}=\nu(T_k (v)-\tkvn)\ \ \ \text{in the sense of distributions},\\\\
    \dys\tkvn(0)=z_\nu \ \ \ \  \text{in}\ \Omega.
  \end{cases}
\end{equation*}
Therefore, $\tkvn\in\psob\cap\liq$ and $\dys \frac{d T_k (v)}{dt}\in\psob$. It can be proved (see \cite{lan}) that, up to subsequences, as $\nu$ diverges 
\[
\begin{array}{l}
\dys\tkvn\longrightarrow \tkv\ \ \ \ \text{strongly in $\psob$ and a.e. in $Q$},\\\\
\dys\|\tkvn\|_{\liq}\leq k \ \ \ \ \forall\nu>0.
\end{array}
\]

 First of all, let us state a preliminary result about the capacity of compact sets, and then our basic result about approximate capacitary potential.
 \begin{lemma}\label{thanks}
 Let $K$ be a compact subset of  $Q=(0,T)\times\Omega$ such that $\capp(K,Q)=0$, then for every open set $U$ such that $K\subseteq U\subseteq Q$, we have
 $$
  \capp(K,U)=0.
 $$ 
 \end{lemma}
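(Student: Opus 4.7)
The plan is to transfer an admissible sequence for $\capp(K,Q)=0$ into one for $\capp(K,U)$ by localising it with a smooth cut-off supported in $U$; the only real content of the argument is that this localisation preserves the smallness of the $W$-norm.

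First, I would invoke the alternative characterisation of capacity on compact sets recalled in Remark \ref{rdefi2}: since $\capp(K,Q)=0$, there is a sequence $\phi_n$ of smooth functions belonging to $W$ with $\phi_n\geq 1$ on an open neighbourhood of $K$ (so in particular $\phi_n\geq\chi_K$) and $\|\phi_n\|_W\to 0$. Since $K$ is compact and contained in the open set $U$, I would then fix a cut-off $\eta\in C_c^\infty(U)$ with $0\leq\eta\leq 1$ and $\eta\equiv 1$ on an open neighbourhood $\mathcal N$ of $K$ whose closure is contained in $U$. The product $\psi_n:=\eta\,\phi_n$ is smooth, compactly supported in $U$, and satisfies $\psi_n\geq\chi_K$, so it is admissible for $\capp(K,U)$.

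The main task is then to verify $\|\psi_n\|_W\to 0$. By Leibniz, $\nabla\psi_n=\eta\,\nabla\phi_n+\phi_n\,\nabla\eta$, so the spatial $L^p(0,T;V)$-part of the norm is bounded by a fixed multiple of
\[
\|\phi_n\|_{\psob}+\|\phi_n\|_{L^p(Q)}+\|\phi_n\|_{L^p(0,T;L^2(\Omega))},
\]
each term of which tends to $0$ with $\|\phi_n\|_W$ via Poincar\'e's inequality and the embedding $V\hookrightarrow L^2(\Omega)$. For the time derivative I would split $(\eta\phi_n)_t=\eta_t\phi_n+\eta(\phi_n)_t$: the second piece is controlled in $L^{p'}(0,T;V')$ by a duality argument, since multiplication by $\eta(t,\cdot)$ maps $V$ into itself with operator norm bounded uniformly in $t$, yielding a bound of the form $C\|(\phi_n)_t\|_{L^{p'}(0,T;V')}$; the first piece is bounded by $\|\eta_t\|_\infty\|\phi_n\|_{L^{p'}(0,T;L^2(\Omega))}$ through $L^2(\Omega)\hookrightarrow V'$.

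The delicate point is exactly the bound on $\eta_t\phi_n$: when $p'>2$, the smallness of $\phi_n$ in $L^{p'}(0,T;L^2(\Omega))$ cannot be read off directly from $\|\phi_n\|_W\to 0$. I would close this gap by invoking the continuous injection $W\hookrightarrow C([0,T];L^2(\Omega))$ noted in Remark \ref{regular}, which gives $\|\phi_n\|_{C([0,T];L^2(\Omega))}\leq C\|\phi_n\|_W\to 0$ and hence smallness in every $L^q(0,T;L^2(\Omega))$. Combining all estimates, $\|\psi_n\|_W\to 0$, and therefore $\capp(K,U)=0$.
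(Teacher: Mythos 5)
Your argument is correct and follows the same strategy as the paper's proof: take a near-optimal smooth sequence for $\capp(K,Q)$, multiply by a smooth cut-off supported in $U$ that equals $1$ near $K$, and show the product still has small $W$-norm. Your treatment of the time derivative --- the Leibniz split $(\eta\phi_n)_t=\eta_t\phi_n+\eta(\phi_n)_t$, a duality bound for $\eta(\phi_n)_t$ in $L^{p'}(0,T;V')$, and the embedding $W\hookrightarrow C([0,T];L^2(\Omega))$ for $\eta_t\phi_n$ --- is a more explicit rendering of the paper's representation $(\pd)_t=-\div(F_\delta)$ followed by integration by parts and the terse appeal to ``standard trace results''; the $p'>2$ issue you flag is precisely what that appeal and the standing hypothesis $p>\frac{2N+1}{N+1}$ are covering.
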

\begin{proof}
Let $\pd\in \regq$ be a sequence approximating the capacity of $K$ in $Q$ (see Remark \ref{rdefi2}), and $\varphi$ be a cut-off function for $K$ in $U$, that is a function in $C^{\infty}_{0}(U)$ such that $\varphi\equiv 1 $ on $K$ and extended to zero on  $Q\backslash U$; therefore we have
$$
\begin{array}{l}
\capp(K,U)\leq \|\pd\varphi\|_{W}= \|\pd\varphi\|_{\psob}+\|(\pd\varphi)_t \|_{\pw-1p'};
\end{array}
$$
easily we have that
$$
 \|\pd\varphi\|_{\psob}\leq C\|\pd\|_{W}.
$$
On the other hand, if $(\pd)_{t} = -\div(F_\delta)$ in the sense of $\pw-1p'$, we have,  reasoning by a density argument,  that, for every $v\in\psob$
$$
\begin{array}{l}
\dys\langle(\pd\varphi)_t, v \rangle_{\pw-1p',\psob}
\dys=\intq F_\delta\cdot\nabla(v \varphi)\ dxdt 
\dys+\intq \pd\varphi_t v\ dxdt.
\end{array}
$$
Thus, using standard trace results (recalling that $p>\frac{2N +1}{N+1}$), H\"{o}lder's inequality and Sobolev embeddings, one can check that
$$
\|(\pd\varphi)_t \|_{\pw-1p'}\leq C\|\pd\|_{W},
$$
that implies the result thanks to the choice of  $\pd$. 
\end{proof}\medskip 
\begin{lemma}\label{acp}
Let $\ms=\msp-\msm \in M(Q)$ where $\msp$ and $\msm$ are  concentrated, respectively, on two disjoint sets $E^+$ and $E^-$  of zero $p$-capacity. Then, for every $\delta>0$, there exist two compact sets $\kdp\subseteq E^+$ and $\kdm\subseteq E^-$ such that

\begin{equation} \label{acp1}
\msp(E^+\backslash \kdp)\leq \delta, \ \ \ \ \msm(E^-\backslash \kdm)\leq \delta,
\end{equation}
and there exist $\pdp, \ \pdm\in  C^{1}_{0}(Q)$, such that
\begin{equation}\label{acp2}
\pdp,\ \pdm\equiv 1\ \text{respectively on}\ \ \kdp, \ \kdm,
\end{equation}
\begin{equation}\label{acp3}
0\leq\pdp,\ \pdm\leq 1,
\end{equation}
\begin{equation}\label{acp4}
\supp(\pdp)\cap\supp(\pdm)\equiv \emptyset.
\end{equation}
Moreover
\begin{equation}\label{acp5}
\|\pdp\|_{{\mathcal S}}\leq \delta,\ \ \ \|\pdm\|_{{\mathcal S}}\leq \delta,
\end{equation}
and, in particular, there exists a decomposition of $(\pdp)_{t}$ and a decomposition  of $(\pdm)_{t}$ such that
\begin{equation}\label{acp6}
\dys\|(\pdp)_{t}^{1}\|_{\pw-1p'}\leq \frac{\delta}{3},\ \ \ \|(\pdp)_{t}^{2}\|_{\luq}\leq \frac{\delta}{3},
\end{equation}
\begin{equation}\label{acp7}
\dys\|(\pdm)_{t}^{1}\|_{\pw-1p'}\leq \frac{\delta}{3},\ \ \ \|(\pdm)_{t}^{2}\|_{\luq}\leq \frac{\delta}{3}.
\end{equation}
Both $\pdp$ and $\pdm$ converge to zero $\ast$-weakly in $\liq$, in $\luq$, and, up to subsequences, almost everywhere as $\delta$ vanishes. 

Moreover, if $\lep$ and $\lem$ are as in \rife{app} we have 
\begin{equation}\label{acp8}
\intq\pdm\ d\lep=\omega(\vare,\delta),\ \ \ \ \intq\pdm\ d\msp\leq\delta ,
\end{equation}
\begin{equation}\label{acp9}
\intq\pdp\ d\lem=\omega(\vare,\delta),\ \ \ \ \intq\pdp\ d\msm\leq\delta ,
\end{equation}
\begin{equation}\label{acp10}
\intq(1-\pdp\pep)\ d\lep=\omega(\vare,\delta,\eta),\ \ \ \ \intq(1-\pdp\pep)\ d\msp\leq \delta +\eta,
\end{equation}
\begin{equation}\label{acp11}
\intq(1-\pdm\pem)\ d\lem=\omega(\vare,\delta,\eta),\ \ \ \ \intq(1-\pdm\pem)\ d\msm\leq \delta +\eta.
\end{equation}
\end{lemma}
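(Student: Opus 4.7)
The plan is to combine inner regularity of the finite Radon measures $\msp$, $\msm$ with the smooth capacitary characterization of $\capp$ from Remark~\ref{rdefi2} and Lemma~\ref{thanks}, and then to deduce \rife{acp8}--\rife{acp11} from the narrow convergence $\lep\to\msp$, $\lem\to\msm$ together with the concentration of $\ms$ on disjoint sets of zero capacity.

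The first step is the construction of the pairs $(K_\delta^\pm,\psi_\delta^\pm)$. Inner regularity of the finite Radon measures $\msp$, $\msm$ supplies compact sets $K_\delta^\pm\subseteq E^\pm$ satisfying \rife{acp1}; since $E^+\cap E^-=\emptyset$ the compacta $K_\delta^\pm$ are disjoint and admit disjoint open neighborhoods $U^\pm$ relatively compact in $Q$. Because $\capp(E^\pm)=0$ we have $\capp(K_\delta^\pm)=0$, and by Lemma~\ref{thanks} this persists with capacity taken inside $U^\pm$. The smooth characterization of $\capp$ in Remark~\ref{rdefi2} then produces, for any prescribed $\sigma>0$, functions $\tilde\psi_\delta^\pm\in C_0^\infty(U^\pm)$ with $\tilde\psi_\delta^\pm\ge\chi_{K_\delta^\pm}$ and $\|\tilde\psi_\delta^\pm\|_W\le\sigma$. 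Truncating at $1$ from above and slightly re-mollifying, with a kernel smaller than the distance between $\supp\tilde\psi_\delta^+$ and $\supp\tilde\psi_\delta^-$, produces $\psi_\delta^\pm\in C_0^1(Q)$ satisfying \rife{acp2}--\rife{acp4} with $\|\psi_\delta^\pm\|_W\le C\sigma$. Under the standing assumption $p>(2N+1)/(N+1)$ one has $W\hookrightarrow\mathcal{S}$ continuously, so choosing $\sigma=\sigma(\delta)$ small enough yields \rife{acp5}; and the trivial split $(\psi_\delta^\pm)_t^1=(\psi_\delta^\pm)_t\in\pw-1p'$, $(\psi_\delta^\pm)_t^2=0$ realizes \rife{acp6}--\rife{acp7}. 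The $\ast$-weak $\liq$, strong $\luq$, and a.e.\ convergence of $\psi_\delta^\pm$ to $0$ (along a subsequence) follow from the uniform bound $|\psi_\delta^\pm|\le 1$, the embedding $\psob\hookrightarrow\luq$, and dominated convergence.

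The substantive content is the integral identities \rife{acp8}--\rife{acp11}. For \rife{acp8}, since $\msp$ concentrates on $E^+$ and $\psi_\delta^-\equiv 0$ on $U^+\supseteq K_\delta^+$,
\[
\intq\psi_\delta^-\,d\msp=\int_{E^+\setminus K_\delta^+}\psi_\delta^-\,d\msp\le\msp(E^+\setminus K_\delta^+)\le\delta,
\]
and $\int\psi_\delta^-\,d\lep\to\int\psi_\delta^-\,d\msp$ by narrow convergence (since $\psi_\delta^-\in C_b(Q)$), giving the $\omega(\vare,\delta)$ bound; \rife{acp9} is symmetric. For \rife{acp10}, invoking the same construction with parameter $\eta$ in place of $\delta$ produces $\pep$ and a compact set $K_\eta^+$ with $\msp(E^+\setminus K_\eta^+)\le\eta$; since $0\le\psi_\delta^+\pep\le 1$ and $\psi_\delta^+\pep\equiv 1$ on $K_\delta^+\cap K_\eta^+$,
\[
\intq(1-\psi_\delta^+\pep)\,d\msp\le\msp\bigl(E^+\setminus(K_\delta^+\cap K_\eta^+)\bigr)\le\delta+\eta,
\]
and $\int(1-\psi_\delta^+\pep)\,d\lep\to\int(1-\psi_\delta^+\pep)\,d\msp$ again by narrow convergence. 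Estimate \rife{acp11} is analogous with signs swapped.

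The main obstacle I foresee is in the first step: verifying that truncation and mollification simultaneously preserve the $C_0^1$ regularity, the pointwise upper bound, the disjointness of supports, and the smallness of $\|\psi_\delta^\pm\|_W$ with the decomposition of $(\psi_\delta^\pm)_t$ matching the prescribed $\delta/3$ constants; this is a quantitative question that reduces to careful choices of $\sigma=\sigma(\delta)$ and of the mollification radius. Once $(K_\delta^\pm,\psi_\delta^\pm)$ is in place, the remainder of the lemma amounts to bookkeeping with narrow convergence and the concentration of $\ms$ on the singular sets.
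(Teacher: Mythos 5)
Your high-level plan --- inner regularity of $\msp,\msm$, Lemma~\ref{thanks} to pass to capacity relative to disjoint open neighborhoods, the smooth characterization of $\capp$, and narrow convergence for \rife{acp8}--\rife{acp11} --- matches the paper, and you handle \rife{acp8}--\rife{acp11} essentially as the paper does. The gap sits exactly in the step you flag as ``a quantitative question'': the ``truncate at $1$ from above and re-mollify'' construction does not deliver \rife{acp2} or \rife{acp5}--\rife{acp7}. First, re-mollifying $\min(\tilde\psi_\delta^+,1)$ with a positive kernel in general destroys $\pdp\equiv 1$ on $\kdp$, since nothing forces the truncated function to equal $1$ on a \emph{neighborhood} of $\kdp$. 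Second, and more seriously, the ``trivial split'' $(\pdp)_t^1=(\pdp)_t\in\pw-1p'$, $(\pdp)_t^2=0$ cannot be made small: truncation is not a bounded operation on $\pw-1p'$. Writing $w=\min(\tilde\psi_\delta^+,1)$ one has $w_t=\chi_{\{\tilde\psi_\delta^+<1\}}(\tilde\psi_\delta^+)_t$ almost everywhere, and multiplying $(\tilde\psi_\delta^+)_t$ by a characteristic function does not preserve smallness of the $\pw-1p'$ norm even though $\|(\tilde\psi_\delta^+)_t\|_{\pw-1p'}\le\sigma$; mollification does not repair this.

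The paper's construction circumvents both problems by taking $\pdp=\ohp$ with $\overline{H}$ a fixed $C^1$ piecewise-affine cut-off satisfying $\overline{H}(s)=1$ for $s\geq 1$: then $\pdp\in C^1_0(Q)$, $0\le\pdp\le 1$, and $\pdp\equiv 1$ on $\kdp$ hold automatically, with no re-mollification. The essential point is the explicit form of the time derivative: with $(\fdp)_t=-\div(F_\delta^+)$ in $\pw-1p'$, one has for every $v\in\psob$
\begin{equation*}
\langle(\pdp)_t,v\rangle=\intq \hp\,F_\delta^+\cdot\nabla v\ dxdt-\frac{8}{3}\int_{\{\frac12\le\fdp\le 1\}}F_\delta^+\cdot\nabla\fdp\ v\ dxdt\,,
\end{equation*}
which exhibits $(\pdp)_t$ as the sum of $-\div(\hp\,F_\delta^+)\in\pw-1p'$ (norm $\le C\delta'$) and an $\luq$ function controlled by $C(\delta'^{p}+\delta'^{p'})$ via Young's inequality. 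The $\luq$ component is genuinely present and is precisely why the space ${\mathcal S}$ allows $u_t\in\pw-1p'+\luq$; your proposal, by placing the whole time derivative in the $\pw-1p'$ slot, has no mechanism to produce this piece or to control the remaining $\pw-1p'$ norm, and so \rife{acp5}--\rife{acp7} are not established.
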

\begin{proof}
Let us fix $\delta>0$, so that,  thanks to the regularity of the measure $\mu_s $, there exist two disjoint compact sets $\kdp\subseteq E^+$ and $\kdm\subseteq E^-$, such that  \rife{acp1} is satisfied, and there exist two  open sets $\udp$ and $\udm$, disjoint,  containing respectively $\kdp$ and $\kdm$.
Now, thanks to Lemma \ref{thanks}, since $\capp(\kdp, Q)=0$ (resp. $\capp(\kdm, Q)=0$), we have that  $\capp(\kdp, \udp)=0$ (resp. $\capp(\kdm, \udm)=0$).
Hence, by definition of parabolic $p$-capacity there exists two functions $\fdp\in C_{0}^{\infty}(\udp)$ (resp. $\fdm\in C_{0}^{\infty}(\udm)$) such that, for any $\delta'>0$, we have 
\begin{equation}\label{sus}
\dys\|\fdp\|_{W}\leq \delta',\ \ \ \ (\text{resp.}\ \ \|\fdm\|_{W}\leq  \delta')
\end{equation}
and 
$$
\dys\fdp\geq\chi_{\kdp}\ \ \ \ \ (\text{resp.}\ \fdm\geq\chi_{\kdm}),
$$
where we have extended these functions to zero, respectively,  in $Q\backslash\udp$ and $Q\backslash\udm$; we will choose the value of $\delta'$ in a suitable way later.

Now, let us define
\begin{equation}\label{sweet}
\pdp= \ohp,\ \ \ \ \ \pdm=\ohm,
\end{equation}
where $\overline{H}(s)$ is the primitive of the continuous function  
\begin{equation}
H(s)=
\begin{cases}
\dys\frac{4}{3} & \text{if}\ \ |s|\leq \frac{1}{2},\\\\
\dys \text{affine} &\text{if}\ \ \frac{1}{2}<|s|\leq 1,\\\\
0&\text{if}\ \ |s|>1.
\end{cases}
\end{equation}
It is easy to see that \rife{acp2}, \rife{acp3} and \rife{acp4} are satisfied. Moreover we have 
$$
\begin{array}{l}
\dys\|\pdp\|_{{\mathcal S}}=\|\pdp\|_{\psob}+\|(\pdp)\|_{\pw-1p' + \luq}\\\\
\dys\leq \|\pdp\|_{\psob}+\|(\pdp)^{1}\|_{\pw-1p'}+\|(\pdp)^{2}\|_{\luq},
\end{array}
$$
for every decomposition of $(\pdp)_t $. From now on we deal only with $\pdp$ since the same argument holds for $\pdm$. Let us observe that, in the sense of distributions, we have  $(\pdp)_t =\hp(\fdp)_t $, and so, if  $(\fdp)_t =-\div(F_{\delta}^{+})$ in $\pw-1p'$, we have that, for any $v\in\psob$
$$
\begin{array}{l}
\dys\langle(\pdp)_t, v \rangle_{\pw-1p',\psob}\\\\
\dys=\intq \hp \ F_{\delta}^{+}\cdot\nabla v\ dxdt 
\dys-\frac{8}{3}\int_{\{\frac{1}{2}\leq \fdp\leq 1\}} F_{\delta}^{+}\cdot\nabla\fdp v\ dxdt.
\end{array}
$$ 
Therefore from \rife{sus} we have
$$
\|\pdp\|_{\psob} \leq C\delta',
$$

$$
\|(\pdp)_{t}^{1}\|_{\pw-1p'}\leq C\delta' ,
$$
and, using Young's inequality,
$$
\|(\pdp)_{t}^{2}\|_{\luq}\leq C(\delta'^{p}+\delta'^{p'}).
$$
So, we can actually choose $\delta'$ small enough such that \rife{acp5}, \rife{acp6} and \rife{acp7} are satisfied; moreover, thanks to a Simon type  compactness result (see \cite{do}) we also have that these functions tends to zero in $\luq$ as $\delta$ goes to zero and so, up to subsequences, almost everywhere; due to this fact the  $\liq$  $\ast$-weak convergence  to zero  as $\delta$ vanishes is obvious. 

Now, if $\lep$ is as in the statement we have, for every $\delta>0$,
$$0\leq \intq\pdm\ d\lep=\intq\pdm\ d\msp +\omega(\vare),$$
while recalling \rife{acp1} we have
 $$
 \begin{array}{l}
 \dys0\leq \intq\pdm\ d\msp=\int_{\udm}\pdm\ d\msp\leq \msp(\udm)\leq \msp(Q\backslash \udp)\\\\
 \dys\qquad\leq \msp(Q\backslash \kdp)=\msp(E^{+}\backslash \kdp)\leq\delta.
 \end{array}
 $$
 Therefore \rife{acp8} is proved, and \rife{acp9} can be obtained analogously.
 Now, let $\delta$ and $\eta$ be two nonnegative fixed values: we have
 $$
 0\leq \intq(1-\pdp\pep)\ d\lep= \intq(1-\pdp\pep)\ d\msp+\omega(\vare);
 $$  
 on the other hand, since $1-\pdp\pep$ is in $C(\overline{Q})$, and is identically zero on $\kdp\cap K^{+}_{\eta}$, using again \rife{acp1} we can obtain
 $$
 \begin{array}{l}
 \dys 0\leq \intq(1-\pdp\pep)\ d\msp=\int_{Q\backslash (\kdp\cap K^{+}_{\eta}) }(1-\pdp\pep)\ d\msp\\\\
 \dys \leq\msp(Q\backslash (\kdp\cap K^{+}_{\eta}))\leq \msp(Q\backslash \kdp)+\msp(Q\backslash K^{+}_{\eta})\leq \delta +\eta.
 \end{array}
 $$  
 This proves \rife{acp10} while the proof of \rife{acp11} is analogous.
\end{proof}\medskip

\begin{remark}
This result is enough to our aim; however, one would like to have a stronger result: can one choose $\pdp$ and $\pdm$ as \emph{uniformly bounded} functions in $\C$ vanishing in the $W_1$ norm? This is an open problem which, for instance,  should allow us to prove the reverse implication of the decomposition Theorem \ref{cap2}; that is, if  $\mu\in M(Q)$ admits a decomposition as in     \rife{cap3}, then  $\mu\in M_{0}(Q) $. 
\end{remark}

In what follows we will always refer to  subsequences  of both $\pdp$ and $\pdm$  that satisfy all the convergence results stated in Lemma \ref{acp}.
  
The essential key in the proof of Theorem \ref{esi} is the following

\begin{theorem}\label{str}
Let $ v^\vare$ and $v$ be as before. Then, for every $k>0$
\[
T_{k}(v^\vare)\longrightarrow T_{k}(v)\ \ \ \text{strongly in}\ \ \psob.
\]
\end{theorem}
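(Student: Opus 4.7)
My plan is to reduce strong convergence to showing that the monotonicity defect
$$
I^\vare:=\intq\bigl[a(t,x,\nabla T_k(v^\vare))-a(t,x,\nabla T_k(v))\bigr]\cdot\bigl[\nabla T_k(v^\vare)-\nabla T_k(v)\bigr]\,dxdt
$$
vanishes as $\vare\to 0$. Since $I^\vare\ge 0$ by \rife{a3} and the a.e.\ convergence $\nabla v^\vare\to\nabla v$ is already in Proposition \ref{pro}, a standard Browder--Minty type lemma then yields $\nabla T_k(v^\vare)\to\nabla T_k(v)$ strongly in $L^p(Q)^N$, while $T_k(v^\vare)\to T_k(v)$ in $L^p(Q)$ follows from the uniform $L^\infty$ bound combined with the $L^1$ convergence in Proposition \ref{pro}. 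The whole proof thus reduces to estimating $I^\vare$ from above by an infinitesimal quantity as $\vare\to 0$, modulo several auxiliary parameters.

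The test function I would insert into the (smooth) equation solved by $v^\vare$ is
$$
\varphi_{\vare,\nu,\eta,\delta,n}:=h_n(v^\vare)\bigl(T_k(v^\vare)-T_k(v)_\nu\bigr)\bigl(1-\pdp\pep-\pdm\pem\bigr),
$$
where $h_n=H_n$ cuts off on $\{|v^\vare|\le 2n\}$, $T_k(v)_\nu$ is the Landes time-regularization (so that $\partial_t T_k(v)_\nu=\nu(T_k(v)-T_k(v)_\nu)$ is classical), and the pairs $(\pdp,\pep)$, $(\pdm,\pem)$ are the capacitary potentials of Lemma \ref{acp}, chosen to be essentially $1$ near the supports of $\msp,\msm$ but with small $\psob\cap(\pw-1p'+\luq)$-norm. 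After differentiation and collection, the key piece of the diffusion term is
$$
\intq h_n(v^\vare)\bigl(1-\pdp\pep-\pdm\pem\bigr)a(t,x,\nabla u^\vare)\cdot\bigl(\nabla T_k(v^\vare)-\nabla T_k(v)_\nu\bigr)\,dxdt,
$$
which, on $\{|v^\vare|\le k\}$, reconstructs $I^\vare$ up to terms involving $\nabla g^\vare$ (absorbed by strong convergence $g^\vare\to g$ in $\psob$) and the substitution of $T_k(v)_\nu$ for $T_k(v)$ (absorbed by the Landes convergence).

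I would take limits in the nested order $\vare\to 0$, $\nu\to\infty$, $\eta\to 0$, $\delta\to 0$, $n\to\infty$. The time block is handled classically: integration by parts, the identity $\partial_t T_k(v)_\nu=\nu(T_k(v)-T_k(v)_\nu)$ and the datum convergence $\uoe\to\uo$ in $\luo$ produce a non-negative contribution plus $\omega(\vare,\nu)$. The cross term carrying $\nabla(1-\pdp\pep-\pdm\pem)$, the lower-order $h_n'$ remainder, and the regular pieces of the right-hand side (tested against $f^\vare$, $G^\vare$, $g^\vare_t$) are all controlled by Proposition \ref{quell}, by the strong $\luq$-compactness of $a(t,x,\nabla u^\vare)$ noted after Proposition \ref{pro}, by the a.e.\ vanishing of $\pdp,\pdm$ from Lemma \ref{acp}, and by the reconstruction estimate \rife{natur0}. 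The singular contributions are precisely what \rife{acp10}--\rife{acp11} were designed to handle: they yield
$$
\Bigl|\intq\varphi_{\vare,\nu,\eta,\delta,n}\,d\lep\Bigr|+\Bigl|\intq\varphi_{\vare,\nu,\eta,\delta,n}\,d\lem\Bigr|\le C\bigl(k+\|g\|_{\liq}\bigr)(\delta+\eta)+\omega(\vare).
$$

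The main obstacle will be the simultaneous control of $\pdp\pep,\pdm\pem$: they must be essentially $1$ on the supports of $\msp,\msm$ so as to suppress the singular right-hand side, yet they must have small norm in $\psob$ and in the trace space $\pw-1p'+\luq$ so that neither the cross-gradient contribution nor the time-derivative manipulation can spoil the monotone estimate. This delicate balance is exactly what Lemma \ref{acp} provides, and the whole scheme works only if the five limits are taken in the prescribed order, so that every $\omega(\cdot)$ remainder is absorbed before the next parameter is activated. Once $I^\vare\to 0$ is established, the Browder--Minty conclusion yields the desired strong convergence.
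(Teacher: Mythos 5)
Your overall scheme (reduce to vanishing of the monotonicity defect, use the Landes time-regularization $\tkvn$, and insert the capacitary cut-off $\fde=\pdep+\pdem$ from Lemma~\ref{acp}) correctly mirrors the machinery the paper sets up, and your reduction to $I^\vare\to 0$ is equivalent to the paper's key estimate \rife{basic}. However, there is a genuine gap: your single test function carries the factor $(1-\fde)$, and therefore, after letting $\vare\to 0$ and $\nu\to\infty$, the best it can give is an estimate of the form
\[
\limsup_{\vare}\intq(1-\fde)\,\a{u^\vare}\cdot\nabla T_k(v^\vare)\,dxdt
\le
\intq(1-\fde)\,\a{u}\cdot\nabla T_k(v)\,dxdt+\omega(\eta,\delta,n).
\]
To conclude \rife{basic} you must \emph{in addition} show that
\[
\limsup_{\vare,\delta,\eta}\intq\fde\,\a{u^\vare}\cdot\nabla T_k(v^\vare)\,dxdt\le 0,
\]
and this is \emph{not} a consequence of $\fde\to 0$ weakly-$\ast$ in $L^\infty(Q)$: the energy $\a{u^\vare}\cdot\nabla T_k(v^\vare)$ may concentrate precisely on the shrinking support of $\fde$, because $v^\vare$ develops spikes near the set $E$ where $\mu_s$ lives. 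Nothing in your argument rules this out.

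This is exactly what Step~1 of the paper's proof (the \emph{near $E$} step) is for, and it requires \emph{different} test functions: $(k-\tkve)\hnve\pdep$ and $(k+\tkve)\hnve\pdem$ are inserted so that, after converting via \rife{comp} to the coercive quantity $\intq\a{(\tkve+g^\vare)}\cdot\nabla(\tkve+g^\vare)\pdep\,dxdt$ (which is nonnegative), one proves it is $\omega(\vare,\delta,\eta)$. Even within that step, the most delicate remainder \rifer{2d} is not controlled by Lemma~\ref{acp} alone but by the separate Lemma~\ref{dazero}, whose own test functions $\beta_n(v^\vare)\pem$ (and symmetrically $B_n((v^\vare)^-)\pep$) exploit a cancellation against the \emph{opposite-signed} approximating singular measure; this is what produces the byproducts \rife{lambdae}--\rife{lambdaem}. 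Your proposal collapses all of this into one test function and one cascade of limits, and that cannot work: without the near-$E$ control the concentration of energy at the singular set is unchecked and the monotonicity defect is not forced to vanish. (A secondary, smaller concern: your far-from-$E$ test function $h_n(v^\vare)(T_k(v^\vare)-\tkvn)(1-\fde)$ differs from the paper's $T_{2k}(v^\vare-T_h(v^\vare)+T_k(v^\vare)-\tkvn)(1-\fde)$; the latter's $T_h$-truncation is what makes both the nonnegativity of the time block and the slice estimate of Lemma~\ref{fettine} go through, and it is not clear your variant reproduces those.)
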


\begin{proof}
Our aim is to prove the following asymptotic estimate:
\begin{equation}\label{basic}
\limsup_{\vare\rightarrow 0}\intq \a{u^\vare}\cdot\nabla T_k(v^\vare)\ dxdt \leq \intq \a{u}\cdot\nabla T_k(v)\ dxdt.
\end{equation}

 The result will readily follow from \rife{basic} by a quite standard argument. We shall prove it in several steps.\newline
{\it Step $0$}. Near $E$ and far from $E$.\newline
For every $\delta, \eta>0$, let $\pdp$, $\pep$, $\pdm$, and $\pem$ as in Lemma \ref{acp} and let $E^+$ and $E^-$ be the sets where, respectively, $\mu_{s}^{+}$ and $\mu_{s}^{-}$ are concentrated; setting $\fde= \pdep+\pdem$, we can write
\begin{equation}\label{fn}
\begin{array}{l}
\dys\intq \a{u^\vare}\cdot\nabla (\tkve-\tkvn)\hnve\ dxdt\\\\
=\dys\intq \a{u^\vare}\cdot\nabla (\tkve-\tkvn)\hnve\fde\ dxdt \\\\
 +\dys\intq \a{u^\vare}\cdot\nabla (\tkve-\tkvn)\hnve(1-\fde)\ dxdt\,.
\end{array}
\end{equation}
Now, if $n>k$, since $  \a{T_{2n}(\ue)}\cdot\nabla \tkvn$ is weakly compact in $\pelle1$ as $\vare$ goes to zero,  $\hnve$ converges to $\hnv$ $\ast$-weakly in $\pelle{\infty}$,  and almost everywhere on $Q$, thanks to Proposition \ref{quell}, we have
\[
\begin{array}{l}
\dys\limsup_{\vare\rightarrow 0}\dys\intq \a{u^\vare}\cdot\nabla (\tkve-\tkvn)\hnve\fde\ dxdt\\\\\dys=\limsup_{\vare\rightarrow 0}\left[ \intq \a{u^\vare}\nabla \tkve\fde\ dxdt\right]
-\dys \intq \a{u}\cdot\nabla \tkvn \hnv\fde\ dxdt.
\end{array}
\]
So we have
\[
\begin{array}{l}
\dys\limsup_{\vare,\nu}\dys\intq \a{u^\vare}\cdot\nabla (\tkve-\tkvn)\hnve\fde\ dxdt\\\\\dys= \limsup_{\vare\rightarrow 0}\left[ \intq \a{u^\vare}\nabla \tkve\fde\ dxdt\right]
-\dys \intq \a{u}\cdot\nabla \tkv \hnv\fde\ dxdt.
\end{array}
\]
Since $0\leq \hnv\leq 1 $ and  $\fde$ tends to zero $\ast$-weakly in $L^\infty (Q)$ as  $\delta$ goes 
 to zero,
\[
\dys \intq \a{u}\cdot\nabla \tkv \hnv\fde\ dxdt=\omega(\delta).
\]
 Therefore, if we prove that
\begin{equation}\label{15}
\limsup_{\vare,\delta,\eta} \intq \a{u^\vare}\nabla \tkve\fde\ dxdt\leq 0,
\end{equation}
then we can conclude
\begin{equation}\label{f}
\dys\limsup_{\vare, \nu,\delta, n, \eta}\dys\intq \a{u^\vare}\cdot\nabla (\tkve-\tkvn)\hnve\fde\ dxdt\leq 0.
\end{equation}
{\it Step $1$}. Near to $E$.\newline
Let us check \rife{15}. If $\mu^\vare=\hat{\mu}^{\vare}_{0}+\lep -\lem$, then, choosing $(k-\tkve)\hnve\pdep $ as test
function in the weak formulation of $u^\vare $, defining $\dys\Gamma_{n,k}(s)=\int_0^s (k-T_{k}(r))H_{n}(r) \ dr$, and
integrating by parts, we obtain
\begin{equation*}
\begin{array}{l}
\dys-\intq \Gamma_{n,k}(v^{\vare})\ \frac{d}{dt}(\pdep)\ dxdt \\\\
\qquad+  \dys\intq(k-\tkve)\hnve \a{u^\vare}\cdot\nabla(\pdep)  \ dxdt\\\\
\dys\qquad+\intq\a{u^\vare}\cdot\nabla\hnve \ (k-\tkve) \pdep   \ dxdt\\\\
\dys-\intq \a{u^{\vare}}\cdot\nabla\tkve\ \hnve\pdep\ dxdt\\\\
=\quad\dys\intq (k-\tkve)\hnve \pdep\ d\mhe \\\\
\qquad+ \dys \intq (k-\tkve)\hnve \pdep\ d\lep \\\\
 - \dys \intq (k-\tkve)\hnve \pdep\ d\lem; 
 \end{array}
\end{equation*}
so, for $n>k$, we have 
\begin{ceqnarray}
&&
\intq\a{(\tkve+g^{\vare})}\cdot\nabla(\tkve +g^{\vare}) \pdep   \ dxdt \clabel{2a}{A}\\
&&
\qquad+ \dys \intq (k-\tkve)\hnve \pdep\ d\lep\clabel{2b}{B}\\
&&
=\dys-\intq \Gamma_{n,k}(v^{\vare})\ \frac{d}{dt}(\pdep)\ dxdt\clabel{2c}{C}\\
&&
\qquad+\frac{2k}{n}\int_{\{-2n< \ve\leq-n\}}\a{u^\vare}\cdot\nabla\ve \ \pdep   \ dxdt\clabel{2d}{D}\\
&&
\qquad+  \dys\intq(k-\tkve)\hnve \a{u^\vare}\cdot\nabla(\pdep)  \ dxdt \clabel{2e}{E}\\
&&
\quad\dys-\intq (k-\tkve)\hnve \pdep\ d\mhe \clabel{2f}{F}\\
 &&
 \qquad + \dys \intq (k-\tkve)\hnve \pdep\ d\lem \clabel{2g}{G}\\
 &&
 \dys\qquad+\intq\a{(\tkve+g^{\vare})}\cdot\nabla
g^{\vare}\ \pdep\ dxdt .\clabel{2h}{H}
\end{ceqnarray}
Here we have used the fact that 
\begin{equation}\label{comp}
\begin{array}{l}
\dys\intq \a{u^{\vare}}\cdot\nabla\tkve\ \pdep\ dxdt\\\\
=\dys\intq\a{(\tkve+g^{\vare})}\cdot\nabla\tkve \pdep   \ dxdt\\\\
\dys=\intq\a{(\tkve+g^{\vare})}\cdot\nabla(\tkve +g^{\vare}) \pdep   \ dxdt\\\\ \dys
\quad- \intq\a{(\tkve+g^{\vare})}\cdot\nabla
g^{\vare}\ \pdep\ dxdt.
\end{array}
\end{equation}

Let us analyze term by term using in particular Proposition \ref{pro} and Lemma \ref{acp};  due to the fact that $\Gamma_{n,k}(v^{\vare})$ converges to $\Gamma_{n,k}(v)$ weakly in $\psob$, we obtain, observing that  $\Gamma_{n,k}(v)\in \psob\cap\liq$
\[
\begin{array}{l}
\quad-\rifer{2c}=\dys\intq \Gamma_{n,k}(v)\ \frac{d\pdp}{dt}\ \pep\ dxdt\\\\
\dys \qquad+\dys\intq \Gamma_{n,k}(v)\ \frac{d\pep}{dt}\ \pdp\ dxdt +\omega(\vare)=\omega(\vare,\delta);
\end{array}
\]
now, since $(k-\tkve)\hnve$ converges to  $(k-\tkv)\hnv$ $\ast$-weakly in $\liq$, we have 
\[
\begin{array}{l}
\rifer{2e}=\dys\intq(k-\tkv)\hnv \a{T_{2n} (v)+ g}\cdot\nabla(\pdep)  \ dxdt +\omega(\vare)=\omega(\vare,\delta);
\end{array}
\]
moreover, since $\hnve \pdep$ weakly converges to $\hnv \pdep$ in $\psob$ and using again Lemma \ref{acp}, we easily have 
\[
-\rifer{2f}=\dys\intq (k-\tkv)\hnv \pdep\ d\mh +\omega(\vare)=\omega(\vare,\delta);
\]
while, using \rife{acp9} we have
\[
\begin{array}{l}
\dys0\leq\rifer{2g}\leq 2k\intq\pdep\ d\lem= 2k\intq\pdep\ d\msm +\omega(\vare)=\omega(\vare,\delta),
\end{array}
\]
and we readily have that $\rifer{2h}=\omega(\vare,\delta)$.

It remains to control term \rifer{2d}; we want to stress the fact that the use of the double cut-off function $\pdep$ has been introduced essentially to control this term.  Suppose we proved that $\rifer{2d}=\omega(\vare,\delta,n,\eta)$ and let us conclude the proof of \rife{15}; actually, collecting all we shown above, we have
\[
\begin{array}{l}
\rifer{2a}+\rifer{2b}=\omega(\vare,\delta,n,\eta),
\end{array}
\]
and, observing that both \rifer{2a} and \rifer{2b} are nonnegative, we can conclude that
\begin{equation}\label{ninso}
\intq\a{(\tkve+ g^{\vare} )}\nabla(\tkve +g^{\vare}) \pdep   \ dxdt=\omega(\vare,\delta,\eta),
\end{equation}
and
\begin{equation}\label{lambdae}
\dys \intq (k-\tkve)\hnve \pdep\ d\lep=\omega(\vare,\delta,n,\eta).
\end{equation}

On the other hand, reasoning as before with $(k+\tkve)\hnve\pdem$ as test function we can obtain
\begin{equation}\label{ninsom}
\intq\a{(\tkve+ g^{\vare} )}\nabla(\tkve +g^{\vare}) \pdem   \ dxdt=\omega(\vare,\delta,\eta),
\end{equation}
and
\begin{equation}\label{lambdaem}
\dys \intq (k+\tkve)\hnve \pdem\ d\lem=\omega(\vare,\delta,n,\eta).
\end{equation}

Now, \rife{ninso} and \rife{ninsom} together with \rife{comp} (that obviously holds true with $\pdem$ in place of $\pdep$) yield  \rife{15}, 
 while \rife{lambdae} and \rife{lambdaem} both show  an interesting property of
 approximating 
renormalized solutions; they suggest that, in some sense, $v^\vare$ (and so the
solution $u^\vare$) tends to be, respectively, large (larger than any $k>0$) on the set where the singular
 measure $\mu_{s}^{+}$ is concentrated, and small (smaller than any $k<0$) on the set where the singular
 measure $\mu_{s}^{-}$ is concentrated. 
 
 So, to conclude let us check that  $\rifer{2d}=\omega(\vare,\delta,n,\eta)$ (and the analogous property for $\pdem$). First of all, since $0\leq\pdp\leq 1$, we have that
 \[
 \begin{array}{l}
\rifer{2d} =\dys\frac{2k}{n}\int_{\{-2n< \ve\leq-n\} }\a{(T_{2n}(\ve)+ g^\vare) }\cdot\nabla (T_{2n}(\ve)+g^{\vare}) \ \pdep   \ dxdt\\\\
\dys\quad-\frac{2k}{n}\int_{\{-2n< \ve\leq-n\}}\a{(T_{2n}(\ve)+g^\vare)}\cdot\nabla g^\vare \ \pdep   \ dxdt\\\\ \leq \dys\frac{2k}{n}\int_{\{-2n< \ve\leq-n\} }\a{(T_{2n}(\ve)+ g^\vare) }\cdot\nabla (T_{2n}(\ve)+g^{\vare}) \ \pep   \ dxdt +\omega(\vare,\delta)\\\\
\dys\qquad=\frac{2k}{n}\int_{\{-2n< \ve\leq-n\}}\a{u^\vare}\cdot\nabla\ve \ \pep   \ dxdt+\omega(\vare,\delta,n),
 \end{array}
 \]
 where to get last equality we used \rife{a2}, H\"{o}lder's inequality and the estimate on the truncates of $\ue$ given by Proposition \ref{pro};
 therefore, we have just to prove that
 \[
 \frac{1}{n}\int_{\{-2n< \ve\leq-n\}}\a{u^\vare}\cdot\nabla\ve \ \pep  =\omega(\vare,n,\eta). 
 \]
 
  To emphasize this interesting property that, at first glance, may appear in contrast with the reconstruction property \rife{recm}, 
 we will prove it in the following 
 \begin{lemma}\label{dazero}
 Let  $\ue$ be a solution of problem \rife{basee} and $\pep$, $\pem$ as in Lemma \ref{acp}. Then 
 \begin{equation}\label{dazerop}
 \frac{1}{n}\int_{\{-2n< \ve\leq-n\}}\a{u^\vare}\cdot\nabla\ve \ \pep   \ dxdt=\omega(\vare,n,\eta),
 \end{equation}
 and
  \begin{equation}\label{dazerom}
 \frac{1}{n}\int_{\{n\leq \ve<2n\}}\a{u^\vare}\cdot\nabla\ve \ \pem   \ dxdt=\omega(\vare,n,\eta).
 \end{equation}
 \end{lemma}
 \begin{proof}
Let us prove \rife{dazerom}; if $\beta_n (s)=B_n (s^+ )$, we can choose $\benve \pem$ as test function for problem \rife{basee}, and rearranging conveniently all terms, we have
\begin{ceqnarray}
&&
\dys\frac{1}{n}\int_{\{n\leq\ve<2n\}}\a{(T_{2n}(\ve)+g^{\vare})}\cdot\nabla(T_{2n}(\ve) +g^{\vare}) \pem   \ dxdt \clabel{0a}{A}\\
&&
\qquad+ \dys \intq \benve \pem\ d\lem\clabel{0b}{B}\\
&&
=\dys\intq \overline{\beta}_n (\ve)\frac{d \pem}{dt}\ dxdt\clabel{0c}{C}\\
&&
\quad-\intq \a{\ue}\cdot\nabla\pem\benve\ dxdt \dys\clabel{0d}{D}\\
&&
 \qquad+\dys \intq \benve\pem\ d\mhe\clabel{0e}{E}\\
&&
 \qquad\dys+\intq \benve\pem\ d\lep\clabel{0f}{F}\\
 &&
 \dys\qquad+\frac{1}{n}\int_{\{n\leq\ve<2n\}}\a{(T_{2n}(\ve)+g^{\vare})}\cdot\nabla
g^{\vare}\ \pem\ dxdt.\clabel{0g}{G}
\end{ceqnarray}
  Observing that both \rifer{0a} and \rifer{0b} are nonnegative, let us analyze the right hand side term by term; thanks to Proposition \ref{pro} and to the fact that $\beta_{n} (v) $ converges to $0$ a.e. on $Q$ and $\ast$-weakly in $\liq$, we have    
   \[
   \rifer{0d}=\omega(\vare,n),
   \]
   while, since $\overline{\beta}_{n}(\ve)$ converges to  $\overline{\beta}_{n}(v)$ as $\vare$ goes to zero, and $\overline{\beta}_{n}(v)$ tends to $0$  in $\luq$ as $n$ diverges, again thanks to Proposition \ref{pro} we easily obtain
    \[
   \rifer{0c}=\omega(\vare,n);
   \]
  moreover, again thanks to Proposition \ref{pro} and to the definition of $\beta_n$, in particular using the fact that $\beta_n (v)$ strongly converges to $0$ in $\psob$ (again this fact  is an easy consequence of the estimate on the  truncates of $\ue$ in Proposition \ref{pro}) and $\ast$-weakly in $\liq$,  we have that $\rifer{0e}=\omega(\vare,n)$ and, using \rife{a2} and H\"{o}lder's inequality as before, we have
  \[
 \rife{0g}=\omega(\vare,n);
   \] 
   finally, thanks to \rife{acp8},
   \[
   \rifer{0f}\leq\intq\pem\ d\lep=\omega(\vare,\eta).
   \]
   Putting together all these facts we obtain \rife{dazerom}, while \rife{dazerop} can be proved in an analogous way choosing $B_n (s^- )$ and $\pep$ as test functions in \rife{basee}. \end{proof}\medskip
   \begin{remark}  \label{remesi}
   Notice that the result of Lemma \ref{dazero} turns out to hold true even for more general functions $\pep$ and $\pem$ in $W^{1,\infty}(Q)$ which satisfy
   \[
   0\leq\pep\leq 1 \ \ \ \ \ \ \ \ \ \ 0\leq\pem\leq 1, 
   \] 
   and
   \[
   0\leq\intq\pep\ d\msm\leq \eta \ \ \ \ \ \ \ \ \ \  0\leq\intq\pem\ d\msp\leq \eta,
   \]
   since we can reproduce the same calculations as in the proof of Lemma \ref{dazero}    using  the fact that  the reminder terms of the integration by parts easily vanish  as first $\vare$ goes to  zero and then $n$ diverges. We will use this fact later. 
   \end{remark}
\noindent{\it Step $2$}.  Far from $E$.\newline
We first prove a result that will be essential to deal with the second term in the right hand side of \rife{fn}:
\begin{lemma}\label{fettine}
Let $h,k>0$, and $\ue$ and $\fde$  be as before, then 
\begin{equation}\label{fetteq}
\dys\int_{\{h\leq |\ve|<h+k\}}|\nabla \ue|^p (1-\fde)=\omega(\vare,h,\delta,\eta)
\end{equation}
\end{lemma}
\begin{proof}
Let $\psi(s)=T_{k}(s-T_h (s) )$ and let us multiply the formulation of $\ue$ by the test function $\psi(\ve)(1-\fde)$; integrating, if $\Theta_{k,h}(s)=\dys\int_{0}^{s}\psi(\sigma)\ d\sigma$, we have
\[
\begin{array}{l}
 \dys\intq \Theta_{k,h}(\ve)_t (1-\fde)\ dxdt  \\\\
\dys\qquad+ \intq \a{u^\vare}\cdot\nabla T_k (\ve -T_{h}(\ve))\ (1-\fde)\ dxdt\\\\
\dys\quad- \intq \a{u^\vare}\cdot\nabla \fde\ T_k (\ve -T_{h}(\ve))\ dxdt\\\\
=\dys\intq f^\vare\  T_k (\ve -T_{h}(\ve)) (1-\fde)  \ dxdt\\\\
\dys\qquad+\intq G^\vare\cdot\nabla(T_k (\ve -T_{h}(\ve)) (1-\fde))\ dxdt \\\\
 \dys\qquad+\intq T_k (\ve -T_{h}(\ve)) (1-\fde) d\lep\\\\
\quad-\dys\intq T_k (\ve -T_{h}(\ve)) (1-\fde) d\lem.
\end{array}
\]
Now using the fact that
\begin{equation}\label{yo}
\begin{array}{l}
 \dys\intq \a{u^\vare}\cdot\nabla T_k (\ve -T_{h}(\ve))\ (1-\fde)\ dxdt\\\\
 = \dys \int_{\{h\leq |\ve|<h+k\}} \a{u^\vare}\cdot\nabla \ue\ (1-\fde)\ dxdt \\\\
 \quad-\dys\int_{\{h\leq |\ve|<h+k\}} \a{u^\vare}\cdot\nabla g^\vare\ (1-\fde)\ dxdt,
\end{array}
\end{equation}
and, using Young's inequality we obtain
\[
\begin{array}{l}
\dys\left|\intq G^\vare\cdot\nabla T_k (\ve -T_{h}(\ve)) \  (1-\fde)\ dxdt \right|
\leq C_1 \int_{\{h\leq |\ve|<h+k\}} |G^\vare|^{p'}(1-\fde)\ dxdt\\\\
\dys + C_2 \int_{\{h\leq |\ve|<h+k\}} |\nabla \ue|^p \ (1-\fde)\ dxdt \dys+\left|\int_{\{h\leq |\ve|<h+k\}}G^\vare\cdot\nabla g^\vare (1-\fde)\ dxdt\right|;
\end{array}
\]
where, when we use Young's inequality, we can choose $C_2$ small as we want (for instance $\dys C_2<\frac{\alpha}{3}$); in the same way we can deal with the second term on the right hand side of \rife{yo}  after we used assumption \rife{a2} on $a$; therefore, using  assumption \rife{a1} on $a$  in the first term of the right hand side of \rife{yo}, noticing that $\Theta_{k,h}(s)$ is nonnegative for any $s\in\re$ and integrating by parts, we obtain
\begin{ceqnarray}
&&
\dys\quad \intq  \Theta_{k,h}(\ve)\frac{d\fde}{dt}\ dxdt \clabel{fa}{A}\\
&&
\qquad\dys+ \int_{\{h\leq |\ve|<h+k\}}|\nabla \ue|^p\ (1-\fde)\ dxdt\clabel{fb}{B}\\
&&
\leq\dys C\int_{\{h\leq |\ve|<h+k\}}|G^\vare|^{p'}\ (1-\fde)\ dxdt\clabel{fc}{C}\\
&&
\qquad +\dys C\int_{\{h\leq |\ve|<h+k\}}|\nabla g^\vare|^p \  (1-\fde)\ dxdt\clabel{fd}{D}\\
&&
\qquad +\dys \int_{\{|\ve|\geq h\}}|f^\vare | (1-\fde) \ dxdt\clabel{fe}{E}\\
&&
\dys\qquad+ C\int_{\{h\leq |\ve|<h+k\}}|b(t,x)|^{p'}\ (1-\fde)\ dxdt \clabel{agg}{F}\\
&&
 \qquad\dys+\intq T_k (\ve -T_{h}(\ve)) (1-\fde) d\lep\clabel{ff}{G}\\
 &&
\quad-\dys\intq T_k (\ve -T_{h}(\ve)) (1-\fde) d\lem\clabel{fg}{H}\\
&&
\dys\qquad+ \into\Theta_{k,h}(\uoe)\ dx.\clabel{fi}{I}
\end{ceqnarray}
First of all, thanks to the definition of   $\Theta_{k,h}(s)$, the strong compactness in $\pelle1$ of both $\ve$ and $\uoe$, using  Vitali's theorem we readily have
\[
\begin{array}{l}
|\rifer{fa}|+|\rifer{fi}|=\omega (\vare,h),
\end{array}
\]
while thanks to the equi-integrability property 
\[
\rifer{fc}+\rifer{fd}+\rifer{fe}+\rifer{agg}=\omega(\vare,h);
\]
finally, thanks to \rife{acp8} and \rife{acp10} we have
\[
|\rifer{ff}|\leq k\left|\intq (1-\pdep)\ d\lep -\intq \pdem\ d\lep \right|=\omega(\vare,\delta,\eta);
\]
analogously using \rife{acp9} and \rife{acp11} one has $|\rifer{fg}|=\omega(\vare,\delta,\eta)$; collecting together all these facts we obtain \rife{fetteq}.
\end{proof}\medskip

Now, let us analyze the second term in the right hand side of \rife{fn}, that is, in some sense, far from
the set where the singular measure is concentrated. 

We can write, for $n>k$, 
\begin{equation}\label{strhs}
\begin{array}{l}
\dys\intq \a{u^\vare}\cdot\nabla (\tkve-\tkvn)\hnve(1-\fde)\ dxdt\\\\
=\dys \int_{\{|\ve|\leq k \}} \a{u^\vare}\cdot\nabla (\ve-\tkvn)(1-\fde)\ dxdt\\\\
\quad-\dys \int_{\{|\ve|> k\}} \a{u^\vare}\cdot\nabla \tkvn\hnve(1-\fde)\ dxdt;
\end{array}
\end{equation}
First of all, thanks to Proposition \ref{quell} and to Proposition \ref{pro}, and since, by its definition,  $|\tkvn|\leq k $ a.e. on $Q$, we have
\begin{equation}\label{facile}
\begin{array}{l}
\dys \int_{\{|\ve|> k\}} \a{u^\vare}\cdot\nabla \tkvn\hnve(1-\fde)\ dxdt\\\\
\dys \quad= \int_{\{|\ve|> k\}} \a{(T_{2n} (\ve) +g^\vare )}\cdot\nabla \tkvn\hnve(1-\fde)\ dxdt\\\\
\dys\quad=  \int_{\{|\ve|> k\}} \a{(T_{2n} (v) +g)}\cdot\nabla \tkvn\hnv (1-\fde)\ dxdt +\omega(\vare)\\\\
\dys\quad=\omega(\vare,\nu).
\end{array}
\end{equation}

To deal with the first term on the right hand side of \rife{strhs} we adapt a method introduced, for the parabolic case, in \cite{po}; for $h>2k$ let us define
\[
w^\vare=T_{2k}(\ve-T_h (\ve)+ \tkve -\tkvn);
\] 
notice that $\nabla w^\vare =0$ if $|\ve|>h+4k$, thus the estimate on $\tkve$ of Proposition \ref{pro} implies that $w^\vare$ is bounded in $\psob$; therefore we easily have that 
\[
w^\vare\longrightarrow  T_{2k}(v-T_h (v)+ \tkv -\tkvn) \ \ \ \text{weakly in $\psob$ and a.e. on $Q$.} 
\]
Hence, let us multiply by $w^\vare (1-\fde)$ the equation solved by $\ue$ and  integrate to obtain
\begin{ceqnarray}
&&
\dys\int_{0}^{T}\langle\ve_{t}, w^\vare (1-\fde)\rangle\ dt \clabel{3a}{A}\\
&&
\qquad\dys+ \intq \a{u^\vare}\cdot\nabla w^\vare (1-\fde)\ dxdt\clabel{3b}{B}\\
&&
\quad\dys- \intq \a{u^\vare}\cdot\nabla \fde\ w^\vare\ dxdt\clabel{3c}{C}\\
&&
=\dys\intq f^\vare\  w^\vare (1-\fde)  \ dxdt \clabel{3d}{D}\\
&&
 \qquad\dys+\intq G^\vare\cdot\nabla(w^\vare (1-\fde))\ dxdt \clabel{3e}{E}\\
&&
 \qquad\dys+\intq w^\vare (1-\fde) d\lep\clabel{3f}{F}\\
 &&
\quad-\dys\intq w^\vare (1-\fde) d\lem.\clabel{3g}{G}
\end{ceqnarray}
Let us analyze term by term the above identity; first of all, thanks to the properties of $w^\vare$ and to  Lebesgue's dominated convergence theorem we  have that $\rifer{3d}= \omega(\vare,\nu,h)$; while, on the other hand, we  have
\[
\dys\rifer{3e}=\int_{\{h\leq v<h+2k \}} G\cdot \nabla v\ (1-\fde)\ dxdt + \omega(\vare,\nu,h),
\]
 and using Young's inequality and  Lemma \ref{fettine},  we have that 
 \[
\dys \int_{\{h\leq v<h+2k \}} G\cdot \nabla v\ (1-\fde)\ dxdt=\omega(h,\delta,\eta).
 \]
 Now, reasoning as in the proof of Lemma \ref{fettine}, thanks to \rife{acp8}--\rife{acp11}, using the fact that $|w^\vare|\leq 2k $, we have that both  $\rifer{3f}=\omega(\vare,\delta,\eta)$ and $\rifer{3g}=\omega(\vare,\delta,\eta)$, while thanks to Proposition \ref{pro} and to the definition of $w^\vare$ we have $
 \rifer{3c}=\omega(\vare,\nu,h)$.
 
Let us now analyze term \rife{3b}; if we define $M=h+4k$ we have
\[
\begin{array}{l}
\dys \intq \a{u^\vare}\cdot\nabla w^\vare (1-\fde)\ dxdt= \intq \a{u^\vare\chi_{\{|\ve|\leq M\}}}\cdot\nabla w^\vare (1-\fde)\ dxdt.
\end{array}
\]
Now, if $E_\vare=\{|\ve-T_h (\ve)+ \tkve -\tkvn|\leq 2k\}$ and $h\geq2k$ we can split it as
\begin{equation}\label{332}
\begin{array}{l}
\dys \intq \a{u^\vare}\cdot\nabla w^\vare (1-\fde)\ dxdt\\\\
\dys= \intq \a{u^\vare\chi_{\{|\ve|\leq k\}}}\cdot\nabla (\ve-\tkvn)(1-\fde)\ dxdt\\\\
\dys\qquad+ \int_{\{|\ve|>k\}} \a{u^\vare\chi_{\{|\ve|\leq M\}}}\cdot\nabla (\ve-T_h (\ve))\  (1-\fde)\chi_{E_\vare}\ dxdt\\\\
\dys\quad-\int_{\{|\ve|>k\}} \a{u^\vare\chi_{\{|\ve|\leq M\}}}\cdot\nabla \tkvn \  (1-\fde)\chi_{E_\vare}\ dxdt.
\end{array}
\end{equation}
Let us analyze the second term in the right hand side of  \rife{332};  
since $\ve -T_h (\ve)=0$ if $|\ve|\leq h$, we have
\[
\begin{array}{l}
\dys\left| \int_{\{|\ve|>k\}} \a{u^\vare\chi_{\{|\ve|\leq M\}}}\cdot\nabla (\ve-T_h (\ve))\  (1-\fde)\chi_{E_\vare}\ dxdt\right|\\\\
\dys\quad\leq \int_{\{h\leq|\ve|<h+4k\}}|\a{\ue}||\nabla \ve|\ dxdt,
\end{array}
\]
and using assumption \rife{a2} on $a$ and Young's inequality we get: 
\[
\begin{array}{l}
 \dys\int_{\{h\leq|\ve|<h+4k\}}|\a{\ue}||\nabla \ve|\  (1-\fde)\ dxdt\\\\
 \dys\leq C\int_{\{h\leq|\ve|<h+4k\}}|\nabla\ue|^p \  (1-\fde)\ dxdt\\\\
 \dys \quad+C\int_{\{h\leq|\ve|<h+4k\}}|\nabla g^\vare|^p \  (1-\fde)\ dxdt
 \\\\ \quad\dys+C\int_{\{h\leq|\ve|<h+4k\}}|b(t,x)|^{p'}\  (1-\fde)\ dxdt.
\end{array}
\]
Thus, using equi-integrability and Lemma \ref{fettine} we obtain
\begin{equation}\label{332b}
 \int_{\{|\ve|>k\}} \a{u^\vare\chi_{\{|\ve|\leq M\}}}\cdot\nabla (\ve-T_h (\ve))\  (1-\fde)\chi_{E_\vare}\ dxdt=\omega(\vare,h,\delta,\eta).
\end{equation}
Let us now analyze the third term in the right hand side of \rife{332}; since, thanks to Proposition \ref{pro}, we have 
\[
\int_{\{|\ve|>k\}} \a{u^\vare\chi_{\{|\ve|\leq M\}}}\cdot\nabla \tkv \  (1-\fde)\chi_{E_\vare}\ dxdt=\omega(\vare),
\]
then
\begin{equation}\label{333}
\begin{array}{c}
\dys\int_{\{|\ve|>k\}} \a{u^\vare\chi_{\{|\ve|\leq M\}}}\cdot\nabla \tkvn \  (1-\fde)\chi_{E_\vare}dxdt\\\\
\dys =\int_{\{|\ve|>k\}} \a{u^\vare\chi_{\{|\ve|\leq M\}}}\cdot\nabla (\tkvn-\tkv) \  (1-\fde)\chi_{E_\vare}dxdt+\omega(\vare),
\end{array}
\end{equation}
so, thank to  the fact that $\tkvn$ strongly converges to $\tkv$ in $\psob$ and using again Proposition \ref{pro} we have
\[
\int_{\{|\ve|>k\}} \a{u^\vare\chi_{\{|\ve|\leq M\}}}\cdot\nabla (\tkvn-\tkv) \  (1-\fde)\chi_{E_\vare}\ dxdt=\omega(\vare,\nu),
\]
that together with \rife{333} yields
\begin{equation}\label{332c}
\int_{\{|\ve|>k\}} \a{u^\vare\chi_{\{|\ve|\leq M\}}}\cdot\nabla \tkvn \  (1-\fde)\chi_{E_\vare}\ dxdt=\omega(\vare,\nu).
\end{equation}
So, putting together \rife{332}, \rife{332b} and \rife{332c} we get
\[
\rifer{3b}=\intq \a{u^\vare\chi_{\{|\ve|\leq k\}}}\cdot\nabla (\ve-\tkvn)(1-\fde)\ dxdt+\omega(\vare,\nu,h,\delta,\eta),
\]
 and then, gathering together all the above results
\begin{equation}
\begin{array}{l}
\dys\int_{0}^{T}\langle\ve_{t}, w^\vare (1-\fde)\rangle\ dt \\\\
\dys\quad+\intq \a{u^\vare\chi_{\{|\ve|\leq k\}}}\cdot\nabla (\ve-\tkvn)(1-\fde)\ dxdt\\\\
=\omega(\vare,\nu,h,\delta,\eta).
\end{array}
\end{equation}
If we prove that
\begin{equation}\label{farfrom}
\int_{0}^{T}\langle\ve_{t}, w^\vare (1-\fde)\rangle\ dt \geq\omega(\vare,\nu,h),
\end{equation}
then we obtain our estimate \emph{far from $E$}: 
\begin{equation}\label{g}
\dys\limsup_{\vare,\nu,\delta,n,\eta}\dys\intq \a{u^\vare}\cdot\nabla (\tkve-\tkvn)\hnve(1-\fde)\ dxdt\leq 0.
\end{equation}

So, let us prove \rife{farfrom}. Observing that, thanks to the fact that $|\tkvn|\leq k$, we can write (recalling that $h>k>0$)
\[
w^\vare= T_{h+k}(\ve-\tkvn)-T_{h-k}(\ve-\tkve);
\]
we have, 
\[
\begin{array}{l}
\dys\int_{0}^{T}\langle\ve_{t}, w^\vare (1-\fde)\rangle\ dt \\\\
\dys=\int_{0}^{T}\langle(\tkvn)_{t},T_{h+k}(\ve -\tkvn) (1-\fde)\rangle\ dt\\\\
\dys\qquad+\intq S_{h+k}(\ve-\tkvn)_t \  (1-\fde)\ dxdt\\\\
\dys\quad-\intq G_{h-k}(\ve)_t \  (1-\fde)\ dxdt,
\end{array}
\]
 where
 \[
 S_{h+k}(s)=\int_{0}^{s}T_{h+k}(\sigma)\ d\sigma,
 \]
 and
 \[
 G_{h-k}(s)=\int_{0}^{s}T_{h-k}(\sigma-T_{k}(\sigma))\ d\sigma.
 \]
 First of all, thanks to the definition of $\tkvn$ we have
\[
\begin{array}{l}
\dys\int_{0}^{T}\langle(\tkvn)_{t},T_{h+k}(\ve -\tkvn) (1-\fde)\rangle\ dt\\\\
\dys=\nu\ \intq (\tkv-\tkvn)T_{h+k}(v -\tkvn) (1-\fde)\ dxdt +\omega(\vare)\\\\
\dys= \nu\ \int_{\{|v|\leq k\}}(v-\tkvn)T_{h+k}(v -\tkvn) (1-\fde)\ dxdt\\\\ 
\qquad \dys+\ \nu\  \int_{\{v>k\}}(k-\tkvn)T_{h+k}(v -\tkvn) (1-\fde)\ dxdt \\\\
\dys\qquad +\ \nu\ \int_{\{v<-k\}}(-k-\tkvn)T_{h+k}(v -\tkvn) (1-\fde)\ dxdt +\omega(\vare);
\end{array}
\]
and the three  terms in the right hand side are all nonnegative, so we can drop  them  to obtain
\begin{equation}\label{nu}
\int_{0}^{T}\langle(\tkvn)_{t},T_{h+k}(\ve -\tkvn) (1-\fde)\rangle\ dt\geq \omega(\vare),
\end{equation}
while integrating by parts we have
\[
\begin{array}{l}
\dys\intq S_{h+k}(\ve-\tkvn)_t \  (1-\fde)\ dxdt
\dys-\intq G_{h-k}(\ve)_t \  (1-\fde)\ dxdt\\\\
=\dys \intq S_{h+k}(\ve-\tkvn) \  \frac{d\fde}{dt}\ dxdt 
\dys-\intq G_{h-k}(\ve) \ \frac{d\fde}{dt} \ dxdt\\\\
\dys\dys\qquad+ \into S_{h+k}(\ve-\tkvn)(T) \ dx
\dys -\into G_{h-k}(\ve)(T) \ dx\\\\
\dys\qquad+\into G_{h-k}(\uoe) \ dx 
\dys- \into S_{h+k}(\uoe-z_\nu ) \ dx
\end{array}
\]
Reasoning as in the proof of Lemma $2.1$ in \cite{po} we can easily show that both
\[
\begin{array}{l}
\dys \into S_{h+k}(\ve-\tkvn)(T) \ dx-\into G_{h-k}(\ve)(T) \ dx\geq 0,
\end{array}
\]
and
\[
\dys\into G_{h-k}(\uoe) \ dx - \into S_{h+k}(\uoe-z_\nu ) \ dx=\omega(\vare,\nu,h).
\]
Therefore we have proved that 
\[
\begin{array}{l}
\dys\int_{0}^{T}\langle\ve_{t}, w^\vare (1-\fde)\rangle\ dt \\\\
\dys\geq  \intq S_{h+k}(\ve-\tkvn) \  \frac{d\fde}{dt}\ dxdt\\\\
\dys\quad-\intq G_{h-k}(\ve) \ \frac{d\fde}{dt} \ dxdt +\omega(\vare,\nu,h),
\end{array}
\]
so, to conclude we have to check that 
\begin{equation}\label{toco}
\begin{array}{l}
\dys \intq S_{h+k}(\ve-\tkvn) \  \frac{d\fde}{dt}\ dxdt\\\\
\dys\quad-\intq G_{h-k}(\ve) \ \frac{d\fde}{dt} \ dxdt\geq \omega(\vare,\nu,h);
\end{array}
\end{equation}
actually, thanks to Proposition \ref{pro} and to the properties of $\tkvn$ we  have
\[
\begin{array}{l}
 \dys\intq S_{h+k}(\ve-\tkvn) \  \frac{d\fde}{dt}\ dxdt\\\\
\dys\quad-\intq G_{h-k}(\ve) \ \frac{d\fde}{dt} \ dxdt\\\\
\dys \geq\intq S_{h+k}(v-\tkv) \  \frac{d\fde}{dt}\ dxdt\\\\
\dys\quad-\intq G_{h-k}(v) \ \frac{d\fde}{dt} \ dxdt + \omega(\vare,\nu)\\\\
\dys= \intq F_{h} (v) \ \frac{d\fde}{dt}+ \omega(\vare,\nu),
\end{array}
\]
where $F_{h}(s) = S_{h+k}(s-T_k (s))-G_{h-k}(s)$; that is,   if $h>2k$,
\[
F_h (s)=
\begin{cases}
\quad\dys\int_{0}^{s-k}(T_{h+k}(\sigma)-T_{h-k}(\sigma))\ d\sigma &\text{if}\ \ s>h-k\\\\
 \qquad 0&\text{if}\ \ |s|\leq h-k \\\\
\quad\dys\int_{0}^{s+k}(T_{h+k}(\sigma)-T_{h-k}(\sigma))\ d\sigma &\text{if}\ \ s<-(h-k),
\end{cases} 
\]

\begin{figure}[!ht]
\centerline{
\begin{picture}(200,80)(-100,-35)
\put(-100,0){\vector(1,0){200}} 
\put(0,-35){\vector(0,1){80}}
\thicklines
\put(-100,-25){\line(1,0){25}}
\put(-50,0){\line(-1,-1){25}}
\put(-50,0){\line(1,0){100}}
\put(50,0){\line(1,1){25}}
\put(75,25){\line(1,0){25}}
\thinlines
\put(-60.5,7){$\scriptstyle -(h-k)$}
\put(48,-10){$\scriptstyle h-k$}
\put(92,-10){$\scriptstyle \sigma$}
\put(4,38){$\scriptstyle T_{h+k}(\sigma)-T_{h-k}(\sigma) $}
\thinlines
 \multiput(0,25)(10,0){8}{\line(1,0){3}}
 \multiput(-3,-25)(-10,0){8}{\line(1,0){3}}
\put(-15,25){$\scriptstyle 2k$}
\put(5,-25){$\scriptstyle -2k$}
\end{picture}
}
\end{figure}

So, $F_{h}(v)$ converges almost everywhere to $0$ on $Q$ and, since $v\in \luq$, we can apply Lebesgue's dominated convergence theorem to conclude that \rife{toco} holds true.\\[1.0ex]
{\it Step $3$}.  Strong convergence of truncates.\newline
 Collecting together \rife{fn}, \rife{f}, and \rife{g} we have, taking again
$n>k$,
\begin{equation}\label{basi}
\begin{array}{l}
\dys\limsup_{\vare,\nu,n}\left(\dys\intq \a{u^\vare}\cdot\nabla \tkve\ dxdt\right.
\left.-\dys\intq \a{u^\vare}\cdot\nabla \tkvn\hnve\ dxdt\right)\leq 0,
\end{array}
\end{equation}
therefore, since using Egorov theorem and Proposition \ref{pro} we have
\[
\begin{array}{l}
\dys\intq \a{u^\vare}\cdot\nabla \tkvn\hnve\  dxdt
\dys=\intq \a{u}\cdot\nabla T_k(v)\ dxdt+\omega(\vare,\nu,n),
\end{array}
\]
then \rife{basi} implies \rife{basic}.

Now, recalling that
 \begin{equation}
\begin{array}{l}
\dys\intq\a{(\tkve+g^{\vare})}\cdot\nabla\tkve    \ dxdt\\\\
\dys=\intq\a{(\tkve+g^{\vare})}\cdot\nabla(\tkve +g^{\vare})
   \ dxdt\\\\ \dys- \intq\a{(\tkve+g^{\vare})}\cdot\nabla
g^{\vare}\ dxdt,
\end{array}
\end{equation}
 using   Fatou's lemma, and Proposition \ref{pro} we can easily conclude that
\begin{equation}
\begin{array}{l}
\dys\intq\a{(\tkve+g^{\vare})}\cdot\nabla(\tkve +g^{\vare})    \ dxdt\\\\
\dys=\intq\a{(\tkv+g )}\cdot\nabla(\tkv +g)    \ dxdt+\omega(\vare);
\end{array}
\end{equation}
Thus, being nonnegative, $\a{(\tkve+g^{\vare})}\nabla(\tkve +g^{\vare})$ actually converges to
$\a{(\tkv+g)}\nabla(\tkv +g)$ strongly in $L^1 (Q)$; hence, using assumption \rife{a1}
\[
\alpha|\nabla (\tkve+g^{\vare})|^p\leq\a{(\tkve+g^{\vare})}\cdot\nabla(\tkve +g^{\vare}),
\]
and so, by Vitali's theorem, recalling that $g^{\vare}$ strongly converges to $g $ in
$\psob$, we get
\[
\tkve\longrightarrow\tkv\ \ \ \text{strongly in}\ \ \psob.
\]
This concludes the proof of Theorem \ref{str}. 
\end{proof}\medskip

\setcounter{equation}{0}
\section{Existence of a renormalized solution}\label{7}

\ \ Now we are able to prove that problem \rife{base} has a renormalized solution.
\begin{proof}[Proof of Theorem \ref{esi}]

Let $S\in W^{2,\infty}(\re)$ such that $S'$ has a compact support as in Definition \ref{1}, and let $\varphi\in  C^{1}_{0}([0,T)\times\Omega)$; then the approximating solutions $\ue$ (and $\ve$) satisfy 
\begin{equation}
\begin{array}{l}
\dys\quad-\into S(\uoe)\varphi(0)\ dx -\intt\langle \varphi_t , S(\ve)\rangle\\\\
\dys \qquad +\intq S'(\ve) a(t,x,\nabla \ue)\cdot \nabla \varphi\ dxdt\\\\
\dys\qquad+\intq S''(\ve) a(t,x,\nabla \ue)\cdot \nabla \ve\ \varphi \ dxdt
\\\\ \dys=\intq S'(\ve) \varphi\ d\hat{\mu}^{\vare}+  \intq S'(\ve) \varphi\ d\lep - \intq S'(\ve) \varphi\ d\lem. 
\end{array}
\end{equation}
Thanks to Theorem \ref{str} all but the last term easily pass to the limit on $\vare$; actually the only terms that give some problems are the last two. We can write
\begin{equation}\label{aeb}
  \intq S'(\ve) \varphi\ d\lep =  \intq S'(\ve) \varphi\ \pdp d\lep + \intq S'(\ve) \varphi\ (1-\pdp) d\lep,
  \end{equation}
where $\pdp$ is defined as in Lemma \ref{acp}; thus
\begin{eqnarray*}
\left| \intq S'(\ve) \varphi\ (1-\pdp) d\lep\right|\leq C \intq   (1-\pdp)\ d\lep=\omega(\vare,\delta),
\end{eqnarray*}
while choosing $S'(\ve)\varphi\pdp$ in the formulation for $\ue$ one gets,
\begin{equation}
\begin{array}{l}
\dys  \intq S'(\ve) \varphi\pdp\ d\lep=-\intq S'(\ve) \varphi\pdp\ d\hat{\mu}^{\vare}+\intq S'(\ve) \varphi\pdp\ d\lem\\\\ \dys\quad-\intq (\varphi\pdp)_t\ S(\ve)\ dxdt\\\\
 \dys\qquad+\intq S'(\ve) a(t,x,\nabla \ue)\cdot \nabla (\varphi\pdp)\ dxdt \\\\
\qquad+\dys\intq S''(\ve) a(t,x,\nabla \ue)\cdot \nabla \ve\ \varphi \pdp\ dxdt; 
\end{array}
\end{equation}
now,  thanks to Proposition \ref{pro} and  the properties of $\pdp$, we readily have
$$
\intq S'(\ve) \varphi\pdp\ d\hat{\mu}^{\vare}=\omega(\vare,\delta),
$$
and, thanks to \rife{acp9}, 
$$
\left|\intq S'(\ve) \varphi\pdp\ d\lem\right|\leq C\intq\pdp\ d\lem=\omega(\vare,\delta)
$$
while, since $S(v)\in\psob\cap\liq$ and using \rife{acp6},
$$
\intq (\varphi\pdp)_t\ S(\ve)\ dxdt=\omega(\vare,\delta);
$$
moreover, since $ a(t,x,\nabla \ue)$ is strongly compact in $\luq$, $S'(\ve)$ is bounded, and $\pdp$ converges to zero in $\psob$ as $\delta$ goes to zero, we have
$$
\intq S'(\ve) a(t,x,\nabla \ue)\cdot \nabla (\varphi\pdp)\ dxdt=\omega(\vare, \delta),
$$
and, finally, using Theorem \ref{str} and the fact that $\nabla\ue = T_M (\ve) +g^\vare$ on the set $\{\ve\leq M\}$,
$$
\intq S''(\ve) a(t,x,\nabla \ue)\cdot \nabla \ve\ \varphi \pdp\ dxdt=\omega(\vare, \delta).
$$
Therefore, from \rife{aeb} we deduce
\begin{equation}
  \intq S'(\ve) \varphi\ d\lep = \omega(\vare).
  \end{equation}
  Analogously we can prove that
\begin{equation}
  \intq S'(\ve) \varphi\ d\lem = \omega(\vare).
  \end{equation}  
  Then $u$ satisfies equation \rife{eq1} with $\varphi\in C^{1}_{0}([0,T)\times\Omega)$;  now, an easy density argument shows that $u$ satisfies the same formulation with $\varphi \in \psob\cap \liq$ such that $\varphi_t \in \pw-1p'$, and $\varphi(T,x)=0$. 
  
  To prove the existence result it remains, then, to prove properties  \rife{rec} and \rife{recm}; so let us take $\hnv(1-\pdm)\varphi$ as test function in the formulation of $u$, where $\varphi\in\regq$. We obtain
  \begin{equation}\label{I}
\begin{array}{l}
\dys-\intq ((1-\pdm)\varphi)_{t} \  \ohnv\ dxdt \\\\
\dys\qquad+ \intq \hnv a(t,x,\nabla u )\cdot\nabla( (1-\pdm)\varphi) \ dxdt\\\\
\dys\quad=\intq  \hnv (1-\pdm)\varphi \ d\hat{\mu}_0\\\\
\qquad+\dys \frac{1}{n}\int_{\{n< v\leq 2n\}}  a(t,x,\nabla u )\cdot\nabla v \ (1-\pdm)\varphi\ dxdt\\\\
\dys\dys \quad-\frac{1}{n}\int_{\{-2n\leq v< -n\}}  a(t,x,\nabla u )\cdot\nabla v \ (1-\pdm)\varphi\ dxdt.
\end{array}
\end{equation}
 Now, recalling that $\ue$ is also a distributional  solution with datum $\mu^\vare$ we have
 \begin{equation}\label{II}
 \begin{array}{l}
 \dys\quad-\intq ((1-\pdm)\varphi)_t \ve\ dxdt +\intq a(t,x,\nabla\ue)\cdot\nabla((1-\pdm) \varphi)\ dxdt
 \\\\ \dys=
 \intq(1-\pdm)\varphi\ d\mhe +\intq(1-\pdm)\varphi\ d\lep-\intq(1-\pdm)\varphi\ d\lem,
 \end{array}
 \end{equation}
 for every $\varphi \in \regq$.
 
 Therefore, let us take the difference between \rife{I} and \rife{II}; we obtain
 \begin{ceqnarray}
&&
\quad-\intq ((1-\pdm)\varphi)_{t} \  \ohnv\ dxdt +\intq ((1-\pdm)\varphi)_t \ve\ dxdt \clabel{exa}{A}\\
&&
\dys\qquad+ \intq \hnv a(t,x,\nabla u )\cdot\nabla( (1-\pdm)\varphi) \ dxdt\clabel{exb}{B}\\
&&
\quad\dys-\intq a(t,x,\nabla\ue)\cdot\nabla((1-\pdm) \varphi)\ dxdt\clabel{exc}{C}\\
&&
\dys\quad-\intq  \hnv (1-\pdm)\varphi \ d\hat{\mu}_0+ \intq(1-\pdm)\varphi\ d\mhe \clabel{exd}{D}\\
&&
\dys \qquad+\frac{1}{n}\int_{\{-2n\leq v< -n\}}  a(t,x,\nabla u )\cdot\nabla v \ (1-\pdm)\varphi\ dxdt\clabel{exe}{E}\\
&&
\dys\qquad+\intq(1-\pdm)\varphi\ d\lep \clabel{exf}{F}\\
&&
\quad-\intq(1-\pdm)\varphi\ d\lem\clabel{exg}{G}\\
&&
\dys= \frac{1}{n}\int_{\{n< v\leq 2n\}}  a(t,x,\nabla u )\cdot\nabla v \ (1-\pdm)\varphi\ dxdt.\clabel{exh}{H}
\end{ceqnarray}
First of all, we easily have
\[
\rifer{exa}=\omega(\vare,n),
\]
and, thanks to Proposition \ref{pro},
\[
\rifer{exb}+\rifer{exc}=\omega(\vare,n).
\]
 Now, since $\hnv$ strongly converges to $1$ in $L^{p}(0,T;W^{1,p}(\Omega))$ (thanks to the estimate on the truncates of Proposition \ref{natur}, as we said before) and  then $\hnv (1-\pdm)\varphi $ converges to $ (1-\pdm)\varphi$ in $\psob$, we have
 \[
 \rifer{exd}=\omega(\vare,n). 
 \]
 Moreover,  thanks to \rife{acp11},
 \[
 \rifer{exg}=\omega(\vare,\delta)
 \]
 and  thanks to Lemma \ref{dazero} (see also Remark \ref{remesi}), and to Theorem \ref{str}, we have
 \[
 \rifer{exe}=\omega(n,\delta).
 \]
 
 Finally, using again Theorem \ref{str}, and Lemma \ref{dazero}, we have
 \[
 \rifer{exh}=\frac{1}{n}\int_{\{n< v\leq 2n\}}  a(t,x,\nabla u )\cdot\nabla v \ \varphi\ dxdt+\omega(n,\delta),
 \]
 while, by construction of $\lep$, we have
 \[
 \rifer{exf}=\intq\varphi\ d\msp +\omega(\vare,\delta).
 \]
 Putting together all the above results we obtain \rife{rec} for every $\varphi \in\regq$. Now, if $\varphi\in C^{\infty}(\overline{Q})$ we can split
 \begin{equation}\label{splitte}
 \begin{array}{l}
 \dys \frac{1}{n}\int_{\{n< v\leq 2n\}}  a(t,x,\nabla u )\cdot\nabla v \ \varphi\ dxdt\\\\=\dys \frac{1}{n}\int_{\{n< v\leq 2n\}}  a(t,x,\nabla u )\cdot\nabla v \ \varphi\pdp\ dxdt \\\\\qquad+\dys\frac{1}{n}\int_{\{n< v\leq 2n\}}  a(t,x,\nabla u )\cdot\nabla v \ \varphi(1-\pdp)\ dxdt,
 \end{array}
 \end{equation}
 and, thanks to what we proved before
 \begin{equation}\label{splitte1}
 \lim_{n\to\infty}\frac{1}{n}\int_{\{n< v\leq 2n\}}  a(t,x,\nabla u )\cdot\nabla v \ \varphi\pdp\ dxdt= \intq \varphi \ d\mu_{s}^{+} +\omega(\delta).
 \end{equation}
 
 On the other hand, reasoning as before, we are under the assumption of Lemma \ref{dazero} (see Remark \ref{remesi}), so we have
 $$
 \frac{1}{n}\int_{\{n< \ve\leq 2n\}}  a(t,x,\nabla \ue )\cdot\nabla \ve \ \varphi(1-\pdp)\ dxdt=\omega(\vare,n,\delta),
 $$ 
 that, gathered together with the strong convergence of truncates proved in Theorem \ref{str}, yields
 \begin{equation}\label{splitte2}
 \frac{1}{n}\int_{\{n< v\leq 2n\}}  a(t,x,\nabla u )\cdot\nabla v \ \varphi(1-\pdp)\ dxdt=\omega(n,\delta).
 \end{equation}
 Finally, putting together \rife{splitte}, \rife{splitte1}  and \rife{splitte2} we get \rife{rec} for every $\varphi \in  C^{\infty}(\overline{Q})$, and, reasoning by density, for every $\varphi \in  C(\overline{Q})$, which concludes the proof of \rife{rec}.  To obtain \rife{recm} we can reason as before using $\pdp$ in the place of $\pdm$ and viceversa, and  this concludes the proof of Theorem \ref{esi}.
\end{proof}\medskip

\setcounter{equation}{0}
\section{A partial uniqueness result and inverse maximum principle}\label{8}
\subsection{Uniqueness in the linear case}
   In this section we try to stress the fact that the notion of renormalized solution, as in the elliptic case, should be the right one
 to get uniqueness. As we said before if the datum $\mu$ belongs to $M_0 (Q)$ the renormalized solution turns out to be unique (see \cite{dpp}); the same happens for a general measure datum and $\uo\in \luo$ as initial condition, if the operator is linear, that is if
\begin{equation}\label{linear}
a(t, x,\xi)=M(t, x)\cdot\xi,
\end{equation}
where $M$ is a matrix with bounded, measurable entries, and satisfying the ellipticity assumption \rife{a1} (obviously with $p=2$). 
In fact we have
\begin{theorem}\label{tlinear}
Let M be as in \rife{linear}, $\mu\in M(Q)$, and $u_0 \in \luo$. Then the renormalized solution of problem
\begin{equation}\label{linbas}
\begin{cases}
    u_{t}- \mathrm{div} (M(t,x)\nabla u)
 =\mu & \text{in}\ (0,T)\times\Omega,\\
    u(0,x)=\uo & \text{in}\ \Omega,\\
 u(t,x)=0 &\text{on}\ (0,T)\times\partial\Omega,
  \end{cases}
\end{equation}
is unique.
\end{theorem}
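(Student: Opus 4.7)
The plan is to show that, in the linear case, every renormalized solution coincides with the \emph{duality solution} in the sense of Stampacchia, whose uniqueness is classical. For each test function $\phi\in C^\infty_0(Q)$, let $w$ be the solution of the adjoint (backward) problem
$$-w_t-\mathrm{div}(M^T(t,x)\nabla w)=\phi,\quad w(T,\cdot)=0,\quad w=0\text{ on }(0,T)\times\partial\Omega.$$
By classical linear parabolic regularity, $w$ is bounded and continuous on $\overline Q$ with $w_t,\nabla w\in L^\infty(Q)$. The target identity is
$$\int_Q u\,\phi\,dxdt=\int_Q w\,d\mu+\int_\Omega u_0\,w(0)\,dx,$$
which depends only on the data and hence forces uniqueness of $u$ in $L^1(Q)$.

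To derive this identity, I would substitute $\varphi=w$ and $S=S_n$ (with $S_n'=H_n$, so that $S_n''$ is supported in $\{n\leq|s|\leq 2n\}$ with values $\mp 1/n$) into the renormalized formulation \rife{eq1} and let $n\to\infty$. The routine terms pass by dominated convergence: $\int_\Omega S_n(u_0)w(0)\to\int u_0w(0)$ since $|S_n(u_0)|\leq|u_0|\in L^1$; $\int_Q S_n'(v)M\nabla u\cdot\nabla w\to\int_Q M\nabla u\cdot\nabla w$ since $M\nabla u\in L^q(Q)$ for some $q>1$ and $\nabla w\in L^\infty$; and $\int_Q S_n'(v)w\,d\hat\mu_0\to\int_Q w\,d\hat\mu_0$, with the error term $\int_Q G\cdot S_n''(v)\nabla v\cdot w\,dxdt$ vanishing via H\"older's inequality together with the natural bound $\tfrac{1}{n}\int_{\{n\leq|v|<2n\}}|\nabla v|^p\leq C$ from Proposition~\ref{natur} (and the fact that $\int_{\{n<|v|<2n\}}|G|^{p'}\to 0$ as $|\{|v|>n\}|\to 0$). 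The crucial singular term splits as $\int_Q S_n''(v)M\nabla u\cdot\nabla v\cdot w\,dxdt=-\tfrac{1}{n}\int_{\{n<v<2n\}}+\tfrac{1}{n}\int_{\{-2n<v<-n\}}$, and by the reconstruction conditions \rife{rec} and \rife{recm} converges to $-\int_Q w\,d\mu_s^++\int_Q w\,d\mu_s^-$. Combining these limits and integrating by parts against the adjoint equation (using $v=u-g$, $w(T)=0$, and the normalization $g(0)=0$ built into Definition~\ref{1} since $v(0)=u_0$), one recovers exactly the duality identity.

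The principal obstacle is passing to the limit in the time-derivative term $-\int_0^T\langle w_t,S_n(v)\rangle\,dt$, because $v\in L^q(0,T;W^{1,q}_0(\Omega))$ with $q<p-\tfrac{N}{N+1}$ does not lie in $\psob$, so the limiting pairing $\langle w_t,v\rangle$ is ill-defined a priori. This is resolved by the smoothness of the adjoint solution $w$: for smooth $w$ the pairing reduces to the ordinary integral $\int_Q w_t\,S_n(v)\,dxdt$, and since $v\in L^\infty(0,T;L^1(\Omega))\subset L^1(Q)$ with $|S_n(v)|\leq|v|$ and $w_t\in L^\infty$, dominated convergence yields $\int_Q w_t\,v\,dxdt$ in the limit. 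A secondary delicate point is selecting the decomposition of $\mu_0$ so that the boundary terms from $g$ in the time integration by parts match correctly; Proposition~\ref{indi}, strengthened as in Remark~\ref{ultimo} for the linear case, provides the necessary freedom to enforce $g(0)=0$ without loss of generality.
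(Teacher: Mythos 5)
Your overall plan — show that in the linear case any renormalized solution is a duality (Stampacchia) solution, by testing the renormalized formulation against the solution $w$ of the adjoint retrograde problem with $S'=H_n$ and letting $n\to\infty$, using \rife{rec}, \rife{recm} for the singular term — is exactly the paper's strategy. But there is a genuine gap in the implementation: you invoke ``classical linear parabolic regularity'' to claim $\nabla w, w_t\in L^\infty(Q)$, and you use this twice (to pass to the limit in $\int_Q S_n'(v)M\nabla u\cdot\nabla w$, and to reduce the pairing $\langle w_t,S_n(v)\rangle$ to an ordinary integral dominated by $\|w_t\|_\infty |v|$). This is not available here: the matrix $M$ is only assumed bounded and measurable, so De Giorgi--Nash--Moser theory gives $w\in C(\overline Q)\cap L^2(0,T;H^1_0(\Omega))$ with $w_t\in L^2(0,T;H^{-1}(\Omega))$, but in general neither $\nabla w$ nor $w_t$ is in $L^\infty$. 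Moreover, with only $\nabla w\in L^2$ and $\nabla u\in L^q$ for $q<2-\tfrac{N}{N+1}<2$, the product $M\nabla u\cdot\nabla w$ need not be in $L^1(Q)$, so the limit you write for the principal term is not even well defined.

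The paper circumvents this not by upgrading the regularity of $w$, but by substituting the retrograde equation \emph{before} letting $n\to\infty$: writing $\nabla u=\nabla v+\nabla g$, the two problematic terms $-\int_0^T\langle w_t,\overline H_n(v)\rangle\,dt$ and $\int_Q\nabla\overline H_n(v)\cdot M^\ast\nabla w$ are combined via the weak formulation of the adjoint problem (with $\overline H_n(v)\in L^2(0,T;H^1_0)\cap L^\infty$ as an admissible test function) into $\int_Q\overline H_n(v)\,\psi\,dxdt$, which passes to the limit by dominated convergence since $v\in L^1(Q)$; the remaining gradient term $\int_Q H_n(v)\nabla g\cdot M^\ast\nabla w$ is in $L^1$ because $\nabla g,\nabla w\in L^2$. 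The boundary-term bookkeeping you mention ($g(0)=0$, so $\int_0^T\langle w_t,g\rangle\,dt=-\int_0^T\langle g_t,w\rangle\,dt$) is indeed needed and is consistent with the definition, as you observe; but the appeal to Proposition~\ref{indi}/Remark~\ref{ultimo} to ``enforce'' it is unnecessary, since $v(0)=u_0$ already gives $g(0)=0$ for the chosen decomposition. If you replace your two uses of $L^\infty$ regularity of $\nabla w,w_t$ with this substitution of the adjoint PDE at finite $n$, the argument closes.
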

\begin{proof}
We will prove this result by showing that a renormalized solution of problem \rife{linbas} is a solution in a 
\emph{duality sense};  uniqueness will follow immediately as in the elliptic case where the notion of duality solution was introduced and studied in \cite{s}.  So, let $w\in \sobl \cap C(\overline{Q})$ such that $w_t\in\l2h-1$ with $w(T)=0$ and choose $\ohnv$ and $w$ as test
functions in \rife{eq1}; we have
 \begin{ceqnarray}
 &&
 \dys\quad-\into \overline{H}_n (\uo)w(0)\ dx \clabel{8a}{A}\\
 &&
\quad-\dys\intt \left\langle w_t,\ohnv\right\rangle \ dt\clabel{8b}{B}\\ 
&&
\qquad\dys+\intq\ \hnv M(t,x)\nabla u\cdot\nabla w\ dxdt\clabel{8c}{C}\\ 
&&
\quad-\dys \frac{1}{n}\int_{\{n\leq v<2n\}}M(t,x)\nabla u\cdot\nabla v\ w \ dxdt\clabel{8d}{D}\\ 
&&
\qquad\dys+ \frac{1}{n}\int_{\{-2n< v\leq -n\}}M(t,x)\nabla u\cdot\nabla v\ w \ dxdt\clabel{8e}{E}\\ 
&&
=\dys\intq \hnv\ w\  d\mh,\clabel{8f}{F}
\end{ceqnarray}
and by properties \rife{rec} and \rife{recm} we readily obtain
\begin{equation}\label{recl}
\rifer{8a}+\rifer{8b}+\rifer{8c} -\rifer{8f}=\intq w \ d\mu_s + \omega(n),
\end{equation}
where $\mu_s =\msp-\msm$.

On the other hand if $\psi\in C^{\infty}_{0}(Q)$ we can choose $w$ as the solution of the \emph{parabolic
retrograde problem}
\begin{equation}\label{retro}
\begin{cases}
    -w_{t}- \mathrm{div} (M^{\ast}(t,x)\nabla w)
 =\psi & \text{in}\ (0,T)\times\Omega,\\
    w(T,x)=0 & \text{in}\ \Omega,\\
 w(t,x)=0 &\text{on}\ (0,T)\times\partial\Omega,
  \end{cases}
\end{equation}
where $M^{\ast}(t,x)$ is the transposed matrix of $M(t,x)$;
now, since both $\hnv$ and $g$ are  good test functions for this problem, and recalling that
 $\ohnv$ converges to $v$ in $L^1(Q)$ while $\overline{H}_n (\uo)$ converges to $\uo$, we have
\begin{equation}\label{recll}
\begin{array}{l}
\rifer{8a}+\rifer{8b}+\rifer{8c} -\rifer{8f}=\dys-\into \overline{H}_n (\uo)w(0)\ dx \\\\
\quad-\dys\intt \left\langle w_t,\ohnv\right\rangle \ dt
\dys+\intq\ \nabla \ohnv \cdot M^\ast(t,x) \nabla w\ dxdt\\\\
\dys\quad-\intq \hnv\ w\  d\mh
\dys+\intq \hnv\nabla g\cdot M^\ast(t,x) \nabla w\ dxdt\\\\
\dys=-\into \overline{H}_n (\uo)w(0)\ dx+\intq \ohnv\psi\ dxdt\\\\
\dys\quad-\intq \hnv\ w\  d\mh +\intq \hnv\nabla g\cdot M^\ast(t,x) \nabla w\ dxdt
\dys\\\\\dys=-\into \uo w(0)\ dx+\intq v\psi\ dxdt\\\\
\dys\quad-\intq  w\  d\mh +\intq \nabla g\cdot M^\ast(t,x) \nabla w\ dxdt +\omega(n)\\\\
\dys=-\into \uo w(0)\ dx+\intq v\psi\ dxdt
\dys-\intq  w\  d\mh +\intq  g \ \psi dxdt \\\\
\dys\qquad+\intt\langle w_t ,g \rangle \ dt+\omega(n)\\\\
\dys=-\into \uo w(0)\ dx+\dys\intq u\psi\ dxdt
\dys-\intq  w\  d \mu_{0}  +\omega(n);
\end{array}
\end{equation}
hence, comparing \rife{recl} and \rife{recll}, we obtain
\begin{equation}\label{iffe}
-\into \uo w(0)\ dx+\intq u\psi\ dxdt=\intq w\ d\mu,
\end{equation}
for every $\psi\in C^{\infty}_{0}(Q)$. Therefore, since if $u_1$ and $u_2$ satisfy \rife{iffe}, then
$$
\intq (u_1 -u_2 )\psi\ dxdt=0
$$
for every $\psi\in C^{\infty}_{0}(Q)$, and so $u_1 = u_2$, $u$ is the unique solution of \rife{linbas}.
\end{proof}\medskip

\subsection{Inverse maximum principle for general parabolic operators}

In the elliptic case, an easy consequence of the definition and existence of  a renormalized solution  (see \cite{dmop}) is the so called  \emph{Inverse maximum principle} for general monotone operators proved independently in
\cite{dp} in the model case of the Laplace operator. This result has a large number of interesting  
applications; for instance it allows  to prove a generalized \emph{Kato's inequality} when $\Delta u$ is a measure (see \cite{bp}). 
In the same way for parabolic equations,  a straightforward consequence of Definition \ref{1} and Theorem \ref{esi}, using again the notation $v=u-g$, is the 
  following result where, for technical reasons we must make a stronger assumption on $g$.
\begin{theorem}[Parabolic "inverse" maximum principle]\label{imp}
Let $\mu\in M(Q)$, and suppose that there exists $g\in\psob\cap\liq$ such that $\mu$ can be decomposed as in \rife{app0};  let  $u$ be the renormalized solution of problem \rife{base}. Then,   if $u\geq 0$, we have
$\mu_s \geq 0$.
\end{theorem}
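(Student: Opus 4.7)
The plan is to extract the conclusion directly from the reconstruction identity \rife{recm} in Definition \ref{1}, exploiting the extra assumption that $g\in\liq$. The key observation is that the singular negative part $\mu_s^-$ is controlled by an integral over the sets $\{-2n<v\leq -n\}$, and under our hypotheses these sets must eventually be empty.

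More precisely, set $M=\|g\|_{\liq}$ and recall $v=u-g$. Since $u\geq 0$ almost everywhere on $Q$, we have
\[
v = u - g \geq -g \geq -M \qquad \text{a.e.\ in } Q.
\]
Hence, as soon as $n>M$, the level set $\{-2n<v\leq -n\}$ is empty (up to a null set), so the integral
\[
\frac{1}{n}\int_{\{-2n<v\leq -n\}}a(t,x,\nabla u)\cdot\nabla v\ \psi\ dxdt
\]
vanishes identically for every $\psi\in C(\overline{Q})$ and every $n>M$.

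Applying the reconstruction property \rife{recm}, which is part of the definition of renormalized solution, I conclude that for every $\psi\in C(\overline{Q})$,
\[
\intq \psi\ d\mu_s^- \;=\; \lim_{n\to+\infty}\frac{1}{n}\int_{\{-2n<v\leq -n\}}a(t,x,\nabla u)\cdot\nabla v\ \psi\ dxdt \;=\; 0.
\]
By the Riesz representation theorem this forces $\mu_s^-=0$, and therefore $\mu_s=\mu_s^+\geq 0$, which is the desired inverse maximum principle.

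There is essentially no obstacle in this argument; the only subtle point is the role of the hypothesis $g\in\liq$ (strictly stronger than $g\in\psob$, as stated in the theorem). This boundedness is exactly what guarantees that the translation $v=u-g$ inherits a uniform lower bound from the sign condition on $u$, so that the singular negative part, which \rife{recm} reads off from the behaviour of $v$ at $-\infty$, is forced to vanish. Without such an $L^\infty$ control on $g$ the set $\{-2n<v\leq -n\}$ need not become empty and one could not rule out contributions to $\mu_s^-$.
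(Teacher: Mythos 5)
Your proof is correct and is exactly the ``straightforward consequence'' the paper alludes to without writing out: the $L^\infty$ bound on $g$ transfers the lower bound $u\geq 0$ to $v=u-g\geq -\|g\|_{\liq}$, so for $n$ large the set $\{-2n<v\leq -n\}$ is null, the integral in \rife{recm} vanishes, and hence $\mu_s^-=0$. Your closing remark about the role of $g\in\liq$ (and that only a lower bound on $u$ is really needed) also matches Remark~\ref{nove} in the paper.
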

\begin{remark}\label{nove}
Notice that obviously the result has an easy \emph{nonpositive} counterpart and that Theorem \ref{imp} applies, in particular, for purely singular data.  Also observe  that  the stronger assumption on $g$  is rather technical and relies on the fact that we are not able to prove that, in the \emph{decomposition} Theorem \ref{cap2}, $g$ can be chosen to be  bounded, this question being still  an open problem. Finally notice that the sign assumption on $u$ in Theorem \ref{imp} can be relaxed; actually, because of the reconstruction property \rife{recm}, the same result holds true even  if $u$ is only supposed to be bounded from below (or from above in the nonpositive analogue).
\end{remark}

\end{document}